\newtheorem{theorem}{Theorem}[section] 
\newtheorem{lemma}[theorem]{Lemma}
\newtheorem{exercise}[theorem]{Exercise}
\theoremstyle{definition}
\newtheorem{definition}[theorem]{Definition}
\newtheorem{example}[theorem]{Example}
\theoremstyle{remark}
\newtheorem{remark}[theorem]{Remark}
\newtheorem{remarks}[theorem]{Remarks}
\newtheorem{axiom}{Axiom}
\newcommand{\Ker}{\mathsf{Ker}}
\newcommand{\im}{\mathsf{Im}}
\begin{document}

\author{Kishan Kumar Dayaram}
\address[Kishan Kumar Dayaram]{Department of Mathematics and Applied Mathematics,
    University of Johannesburg, Auckland Park Kingsway Campus,
    P.O. Box 524,
    Auckland Park
    2006,
    South Africa.}

\email{220157896@student.uj.ac.za}

\author{Amartya Goswami*}
\address[Amartya Goswami]{Department of Mathematics and Applied Mathematics,
    University of Johannesburg, Auckland Park Kingsway Campus,
    P.O. Box 524,
    Auckland Park
    2006,
    South Africa.}

\address{National Institute for Theoretical and Computational Sciences (NITheCS), South Africa}

\email{agoswami@uj.ac.za}

\author{Zurab Janelidze}

\address[Zurab Janelidze]{Department of Mathematical Sciences, Stellenbosch University, Private Bag X1 Matieland, 7602, South Africa}

\address{National Institute for Theoretical and Computational Sciences (NITheCS), South Africa}

\email{zurab@sun.ac.za}

\author{Diana Ferreira Rodelo}

\address{Department of Mathematics, University of the Algarve, 8005-139 Faro, Portugal and Center for Research and Development in Mathematics and Applications (CIDMA), Department of Mathematics, University of Aveiro, 3810-193 Aveiro, Portugal}

\email{drodelo@ualg.pt} 
\author{Tim\ Van~der Linden}

\address[Tim Van der Linden]{Institut de Recherche en Math\'ematique et Physique, Universit\'e catholique de Louvain, che\-min du cyclotron~2 bte~L7.01.02, B--1348 Louvain-la-Neuve, Belgium}

\email{tim.vanderlinden@uclouvain.be}

\address{Mathematics \& Data Science, Vrije Universiteit Brussel, Pleinlaan 2, B--1050 Brussel, Belgium}

\email{tim.van.der.linden@vub.be}

\thanks{The fourth author acknowledges financial support by CIDMA (https://ror.org/05pm2mw36)
under the Portuguese Foundation for Science and Technology
(FCT, https://ror.org/00snfqn58), Grants UID/04106/2025 (https://doi.org/10.54499/UID/04106/2025)
and UID/PRR/04106/2025 and Centro de Matemática da Universidade de Coimbra (CMUC), funded by the Portuguese Government through FCT/MCTES, DOI 10.54499/UIDB/00324/2020.\\
The fifth author is a Senior Research Associate of the Fonds de la Recherche Scientifique--FNRS. }

\title[Homological lemmas for (non-abelian) group-like structures by diagram chasing\dots]{Homological lemmas for (non-abelian) group-like structures\\ by diagram chasing in a self-dual context}

\begin{abstract}
    Through abelian categories, homological lemmas for modules admit a self-dual treatment, where half of the proof of a lemma is sufficient to prove the full lemma. In this paper we show how the context of a `noetherian form', recently introduced by the second and third authors, allows a self-dual treatment of these lemmas even in the case of non-abelian categories of group-like structures. This context covers a wide range of examples: module categories, the category of groups, of graded abelian groups, the categories of Lie algebras, of cocommutative Hopf algebras, the category of Heyting semilattices, of loops, the dual of the category of pointed sets, the category of modular/distributive lattices and modular connections, the category of sets and partial bijections, and many others. More generally, it includes all semi-abelian and Grandis exact categories.

    \vspace{0.2cm}
    *\;Corresponding author.
\end{abstract}

\makeatletter
\@namedef{subjclassname@2020}{%
    \textup{2020} Mathematics Subject Classification}
\makeatother

\subjclass[2020]{18G50, 20J05, 18E13, 20J15, 18D30, 08B05}







\keywords{$3\times 3$ Lemma, diagram lemma, Dragon Lemma, Five Lemma, group-like structure, Goursat's Lemma, homomorphism induction, noetherian form, semi-abelian category, Salamander Lemma, Słomiński algebra, Snail Lemma, Snake Lemma, Spider Lemma, Weak Four Lemma}

\maketitle


\section*{Introduction}
In group theory, certain results may be formulated and proved using subgroups in the place of elements. This observation goes back to the work of E.~Noether and her followers in the 1930's. It was later taken further by S.~Mac Lane, who introduced a method of \emph{chasing subobjects} to prove diagram lemmas of homological algebra in an abelian category, which mimics the method of chasing elements (see \cite{Homology,Cftwm}). 

Discovery and investigation of abelian categories was preceded by S.\ Mac Lane's observation of a number of duality phenomena in group theory (see \cite{duality}). Duality here refers to change of orientation of morphisms in a category. Axioms for an abelian category are invariant under duality, which means that every result established in an abelian category has a `dual' counterpart.

While for any ring $R$, the category of $R$-modules is an abelian category, the same is not true for other group-like structures, which form for instance the category of $R$-Lie algebras, or the category of non-abelian groups themselves. Although S.~Mac~Lane anticipates in \cite{duality} that a self-dual axiomatic framework for encompassing such categories should be possible, a framework with the anticipated functionality was only arrived at much later, in \cite{DNA IV}, building on the development of projective homological algebra by M.~Grandis \cite{Grandis}, and homological \cite{Borceux Bourn} and semi-abelian \cite{semi abelian} categories.

The framework described in \cite{DNA IV} emerges from earlier investigations \cite{DNA 0,DNA I,DNA II,DNA III} (see also \cite{Weighill,chasing subgroups,chimpinde}), which extend the elementary language of a category with abstractly specified subobjects (rather than making use of subobjects defined internally via monomorphisms). These abstract subobjects form a category fibred over the original category. The functor that connects the two categories becomes the key player here. Duality is now in reference to this functor (i.e., it is based on considering the same functor between dual categories). The switch from a category to a functor is what enables to reveal the kind of duality phenomena for non-abelian group-like structures that are necessary for borrowing the subobject chasing technique from the context of an abelian category.

In \cite{DNA IV}, it was shown how one can establish the classical isomorphism theorems in this new framework. It was also suggested in \cite{DNA IV} that the same framework should be suitable for proving homological lemmas. For various concrete lemmas, this has already been checked in \cite{workshop notes, chasing subgroups, ZJCambr}. Many of these proofs are in fact almost a copy of proofs that work in the particular case of Grandis exact categories \cite{Grandis}. However, there is an obstacle in adapting the proof of the Snake Lemma from \cite{Grandis}. This obstacle was first identified by the third and fourth authors, while working on \cite{workshop notes}. It had to do with the construction of the connecting morphism. The missing ingredient, the `pyramid', was developed in \cite{DNA IV} and we recall it in the present paper. This led to a proof of a more general version of the Snake Lemma than the one formulated in \cite{workshop notes} --- see Remark~\ref{snake remark} (note that there are also minor differences in the axioms of a noetherian form and those used in \cite{workshop notes}). The proof was sketched in \cite{ZJCambr} and worked out in full detail in \cite{Day}.

The present paper is a culmination of these developments. There is a large overlap with the unpublished thesis \cite{Day} of the first author (prepared under the supervision of the second and the third authors) and \cite{DNA IV}. Our goal in this paper is to provide researchers working with group-like structures an efficient tool for verifying old or new homological-style diagram lemmas in their specific context. The efficiency comes from the use of duality, as in the abelian context. For example, consider the Short Five Lemma for (non-abelian) groups, which states that in a given commutative diagram
$$\xymatrix{ A\ar[r]^f \ar[d]^s  &B \ar[r]^g \ar[d]^t  &C\ar[d]^u \\
        A' \ar[r]^x & B' \ar[r]^y &C'}$$
where the rows are short exact sequences, we have: $t$ is an isomorphism if both $s$ and $u$ are isomorphisms. In order to prove this, we would only need to prove that $t$ is injective (or surjective) if both $s$ and $u$ are, and the full result would then follow by applying duality, just like in the abelian case!

The language we use in setting up the theory closely follows the language of group theory, thus making it easily adaptable to group-like structures at hand. We demonstrate such adaption on an example of a particular type of group-like structures: Słomiński algebras \cite{slominski}, which are sets equipped with a very general form of (non-abelian) addition and subtraction.

All semi-abelian categories and Grandis exact categories fit the framework of this paper. Among the concrete categories included in these two classes of examples are the category of modules over a ring, the category of groups (as well as the category of Słomiński algebras), the category of graded abelian groups, categories of Lie algebras, the category of cocommutative Hopf algebras over any field, the category of Heyting semilattices, the category of loops, the dual of the category of pointed sets, the category of modular lattices and modular connections, the category of sets and partial bijections, and many others.


\tableofcontents

\section{The Framework}
\label{framework}

In this section, we describe an axiomatic framework for establishing homomorphism theorems, introduced in \cite{DNA IV} (see also the references there). Some of the basic results given in \cite{DNA IV} without proofs are completed here with proofs. This framework is based on a language where primitives are given by
\begin{itemize}
    \item
          groups,
    \item group (homo)morphisms (we will use the terms `morphism' and `homomorphism' interchangeably), as well as codomain/domain of a group morphism,
    \item composition of group morphisms,
    \item subgroup of a group,
    \item subgroup inclusion, and
    \item direct/inverse image of a subgroup along a morphism.
\end{itemize}
When applied to group theory, these terms obtain their usual meaning. The framework can be applied to many other group-like structures as well, where these terms are to be interpreted in a natural way. For instance, to apply the framework to module theory, we must interpret
\begin{itemize}
    \item a group as a module,
    \item (codomain/domain of) a group morphism as (codomain/domain of) a module homomorphism,
    \item composition of morphisms as the usual composition of module homomorphisms,
    \item subgroup of a group as a submodule of a module,
    \item subgroup inclusion as inclusion of submodules,
    \item direct/inverse image of a subgroup along a morphism as direct/inverse image of a submodule along a morphism.
\end{itemize}
The framework can also be applied to more general categories, such as semi-abelian categories in the sense of \cite{semi abelian}. In this case, we interpret
\begin{itemize}
    \item a group as an object in the category,
    \item (codomain/domain of) a group morphism as (codomain/domain of) a morphism between objects,
    \item composition of morphisms as the usual composition of morphisms in the category,
    \item subgroup of a group as a subobject of an object,
    \item subgroup inclusion as inclusion of subobjects.
    \item direct/inverse image of a subgroup along a morphism as direct/inverse image of a subobject along a morphism (which are given by regular image and pullback, respectively).
\end{itemize}
Among examples of semi-abelian categories are categories of many different kinds of non-abelian group-like structures: modules over a ring; all kinds of algebras over a ring, including Lie algebras, associative algebras, non-associative algebras, etc.; groups; rings without identity; Heyting meet semi-lattices; loops; cocommutative Hopf algebras over any field (see \cite{GKV16, GSV19}), and others. Thus, our framework is applicable to all of these structures. It also includes the framework of exact categories in projective homological algebra due to M.~Grandis \cite{Grandis}. Furthermore, according to \cite{FKvN Phd}, any algebraic category (i.e., any variety of universal algebras) is an example of our framework, although in some cases, such as the category of sets itself, the notion of an exact sequence trivializes (and so, homological lemmas there do not give anything interesting).

This general framework is called a \emph{noetherian form} (see \cite{FKvN Paper,DNA IV}). As explained in \cite{DNA IV}, it can be formalized as a pair of categories and a functor between them, satisfying certain properties. Duality in the elementary theory of a functor (see \cite{Cftwm}) translates to duality of the framework as described in Table~\ref{FigA}. This table displays atomic expressions of the language of a noetherian form in the first column and their dual counterparts in the second column. The logical expressions formed using these atomic expressions are to be dualized by dualizing each of the atomic expressions in it and keeping the rest of the logical structure of an expression unchanged. Notice that double dual will always give back the starting expression.

\begin{table}
    \resizebox{\textwidth}{!}{%
        \begin{tabular}{cc}
            \toprule
            \textbf{Statement}                                                                      & \textbf{Dual Statement}                                                                 \\
            \midrule
            $G$ is a group                                                                          & $G$ is a group                                                                          \\
            \midrule[0pt]
            $S$ is a subgroup of $G$ (written as $S\in\mathsf{Sub} G$)                              & $S$ is a subgroup of $G$ (written as $S\in\mathsf{Sub} G$)                              \\
            \midrule[0pt]
            $f$ is a morphism                                                                       & $f$ is a morphism                                                                       \\
            from the group $X$ (domain) to the group $Y$ (codomain)                                 & from the group $Y$ (domain) to the group $X$ (codomain)                                 \\
            (written as $f\colon X\to Y$)                                                           & (written as $f\colon Y\to X$)
            \\
            \midrule[0pt]
            $S$ is contained in $T$ (written as $S\subseteq T$)                                     &
            $T$ is contained in $S$ (written as $T\subseteq S$)                                                                                                                               \\
            (where $S$ and $T$ are subgroups of a group $G$)                                        &
            (where $S$ and $T$ are subgroups of a group $G$)
            \\
            \midrule[0pt]
            $h=gf$                                                                                  & $h=fg$                                                                                  \\
            (where $f\colon X\rightarrow Y$, $g\colon Y\rightarrow Z$ and $h\colon X\rightarrow Z$) & (where $f\colon Y\rightarrow X$, $g\colon Z\rightarrow Y$ and $h\colon Z\rightarrow X$) \\
            \midrule[0pt]
            $T$ is a direct image of $S$ along $f$ (written as $T=fS$)                              & $T$ is an inverse image of $S$ along $f$ (written as $T=f^{-1}S$)
            \\
            (where $S\in\mathsf{Sub}X$ and $f\colon X\to Y$)                                        & (where $S\in\mathsf{Sub}X$ and $f\colon Y\to X$)                                        \\\bottomrule
        \end{tabular}
    }
    \smallskip
    \caption{Atomic expressions and their duals}
    \label{FigA}
\end{table}

As hinted in the table, for a group $G$ in the abstract framework, we write $\mathsf{Sub}G$ to denote the set of subgroups of $G$. We require that inclusion of subgroups turns $\mathsf{Sub}G$ into a poset. This means that we have the following properties:
\begin{itemize}
    \item[(P1)]  $A\subseteq A$ (reflexivity),
    \item[(P2)]  $A\subseteq B$ and $B\subseteq C$ imply $A\subseteq C$ (transitivity),
    \item[(P3)]  $A\subseteq B$ and $B\subseteq A$ imply $A=B$ (antisymmetry).
\end{itemize}

For a set $\mathcal{S}$ of subgroups of a group $G$, the \emph{join} of $\mathcal{S}$ is an element $T$ of $\mathsf{Sub}G$, usually denoted $T=\bigvee \mathcal{S}$, such that:
\begin{enumerate}
    \item  $S\subseteq T$ for all $S\in \mathcal{S}$ and
    \item if $S\subseteq U$ for all $S\in \mathcal{S}$ and for some $U\in \mathsf{Sub}G$, then $T\subseteq U$.
\end{enumerate}
Similarly, the \emph{meet} of $\mathcal{S}$ is a subgroup $M$, usually denoted $M=\bigwedge \mathcal{S}$, such that:
\begin{enumerate}
    \item $M\subseteq S$ for all $S\in \mathcal{S}$ and
    \item if $N\subseteq S$ for all $S\in \mathcal{S}$ and for some element $N\in \mathsf{Sub}G$, then $N\subseteq M$.
\end{enumerate}
Note that the notions of join and meet are dual to each other. We require that each $\mathsf{Sub}G$ is a bounded lattice, i.e.,
\begin{itemize}
    \item[(BL)] For each finite $\mathcal{S}\subseteq \mathsf{Sub}G$, the join $\bigvee \mathcal{S}$ and meet $\bigwedge \mathcal{S}$ exist.
\end{itemize}
This includes the case when $\mathcal{S}=\varnothing$, in which case the join is the smallest subgroup of $G$ (that which is contained in every other subgroup of $G$), which we denote by $1$, and the meet is the largest subgroup of $G$ (that which contains every other subgroup of $G$), which we denote by the same $G$ (although formally, we distinguish them --- otherwise, by duality, we would have to identify $G$ with its smallest subgroup as well). When $\mathcal{S}$ has non-zero but finitely many elements, e.g., $\mathcal{S}=\{A_1,\dots,A_n\}$, we write $A_1\vee\dots \vee A_n$ for $\bigvee\mathcal{S}$ and $A_1\wedge\dots \wedge A_n$ for $\bigwedge\mathcal{S}$.

For any morphism $f\colon X\rightarrow Y$, the \emph{image} of $f$, written as $\mathsf{Im}f$, is the direct image of the largest subgroup of $X$ under the map $f$ and (dually) the \emph{kernel} of $f$, written as $\mathsf{Ker}f$, is the inverse image of the smallest subgroup of $Y$ under the map $f$. We require that for each $S\in \mathsf{Sub}X$, the direct image $fS$ of $S$ under $f$ is an element of $\mathsf{Sub}Y$, while the inverse image of $T\in\mathsf{Sub}Y$ under $f$ is an element of $\mathsf{Sub}X$. Thus in particular, $\mathsf{Im}f=fX\in\mathsf{Sub} Y$ and $\mathsf{Ker}f=f^{-1}1\in\mathsf{Sub} X$. In fact, we require that the direct image and inverse image maps along a given $f\colon X\rightarrow Y$ form a Galois connection between $\mathsf{Sub}X$ and  $\mathsf{Sub}Y$. This means that the following holds:
\begin{itemize}
    \item[(G)] $fA\subseteq C \Leftrightarrow A\subseteq f^{-1}C$, when $A\in\mathsf{Sub}X$, $C\in\mathsf{Sub}Y$ and $f\colon X\to Y$.
\end{itemize}
The well-known consequences of the definition of a Galois connection give us the following properties:
\begin{itemize}
    \item[(G1)] The direct and inverse image maps are monotone (i.e., they preserve subgroup inclusions).
    \item[(G2)] For a morphism $f\colon X\rightarrow Y$ and subgroups $A$ of $X$ and $B$ of $Y$, we have $ff^{-1}fA=fA$ and $f^{-1}ff^{-1}B=f^{-1}B$.
    \item[(G3)] The direct image map preserves joins of subgroups and the inverse image map preserves meets of subgroups.
    \item[(G4)] Under a morphism, the direct image of the smallest subgroup is the smallest subgroup and the inverse image of the largest subgroup. This is the same as (G3) considered for empty joins and meets.
\end{itemize}

A morphism $f\colon X\to Y$ is said to be a \textit{zero morphism} when $X=\mathsf{Ker }f$. This turns out to be equivalent to requiring $\mathsf{Im}f=1$, by direct-inverse image Galois connection, according to which
$$X\subseteq f^{-1}1=\mathsf{Ker }f\quad\Leftrightarrow\quad \mathsf{Im}f=fX\subseteq 1;$$
note that since $X$ is the largest subgroup of $X$ and $1$ is the smallest subgroup of $Y$, by antisymmetry, $X\subseteq \mathsf{Ker}f$ is equivalent to $X=\mathsf{Ker }f$ and $\mathsf{Im}f\subseteq 1$ is equivalent to $\mathsf{Im}f=1$.
The dual of a zero morphism is a zero morphism; in other words, the notion of a zero morphism is \emph{self-dual}.

A subgroup $S$ of a group $G$ is \textit{normal} if it is the kernel of some morphism $f\colon G\rightarrow H$. The dual of this notion is: $S$ is \textit{conormal} if it is the image of some morphism $g\colon H\rightarrow G$.
In usual group theory, every subgroup is of course conormal (as every subgroup is the image of its associated embedding). This is not a requirement in the abstract context of a noetherian form since if it were, by duality every subgroup would have to be normal, which does not hold for (non-abelian) groups.

For a group $G$, an \emph{identity morphism} of $G$ is a morphism $\mathsf{id}_{G}\colon G\rightarrow G$ such that $\mathsf{id}_{G}f=f$ and $h\mathsf{id}_{G}=h$ for all morphisms $f\colon A\rightarrow G$ and $h\colon G\rightarrow B$. There is at most one identity morphism for each $G$ ($\mathsf{id}_{G}=\mathsf{id}_{G}\mathsf{id}'_{G}=\mathsf{id}'_{G}$), and we require that there is at least one, and hence exactly one:
\begin{itemize}
    \item[(I)] Every group $G$ admits an identity morphism $\mathsf{id}_G$.
\end{itemize}
A morphism $f\colon X\rightarrow Y$ is an \textit{isomorphism} if there exists a morphism $g\colon Y\rightarrow X$ such that $fg=\mathsf{id}_{Y}$ and $gf=\mathsf{id}_{X}$. The notion of an isomorphism is also self-dual.

The following further properties that we require ensure that groups and morphisms between them form a category and mapping each group to its subgroup lattice yields a functor from the category of groups to the category of bounded lattices, with Galois connections as morphisms:
\begin{enumerate}
    \item[(A)]  $f(gh)=(fg)h$ whenever $f\colon X\rightarrow Y$, $g\colon Y\rightarrow Z$ and $h\colon Z\rightarrow G$
          (i.e., composition of morphisms is associative),
    \item[(F1)]$\mathsf{id}_G S=\mathsf{id}_G^{-1}S=S$, when $S\in\mathsf{Sub}G$,
    \item[(F2)] $g(fS)=(gf)S$ and $f^{-1}(g^{-1}T)=(fg)^{-1}T$, when $f\colon X\rightarrow Y$, $S\in\mathsf{Sub}X$ and $T\in\mathsf{Sub}Y$.
\end{enumerate}
Note that each of the requirements (P1-P3), (I), (G), (A), (F1), (F2) are self-dual. They constitute Axiom 1 in \cite{DNA IV}, which states that:

\begin{axiom}\label{ax1} Groups and group morphisms form a category under composition of groups. Moreover, for each group $G$, the subgroups of $G$ together with subgroup inclusions form a poset. Furthermore, each morphism $f\colon X\rightarrow Y$ defines a monotone Galois connection
    \begin{center}
        $\xymatrix {\mathsf{Sub X} \ar @/^/[r]^{f(-)}  &\mathsf{Sub Y} \ar@/^/[l]^{f^{-1}(-)} }$
    \end{center}
    given by direct and inverse image maps under $f$ (as left adjoint and right adjoint, respectively), and this defines a functor between the category of groups and the category of posets with Galois connections.
\end{axiom}
A useful consequence of Axiom~\ref{ax1} (more precisely, of (G) and (F1,2)) is the following:
\begin{itemize}
    \item[(GF)] The direct and inverse image maps corresponding to an isomorphism $f$ are the same as the inverse image and direct image map, respectively, corresponding to its inverse $f^{-1}$.
\end{itemize}
The requirement (BL) is part of Axiom 2 in \cite{DNA IV}, which states that:

\begin{axiom}\label{ax2}
    The poset of subgroups of each group is a bounded lattice. Moreover, for a morphism $f\colon X\rightarrow Y$ and subgroups $A\in\mathsf{Sub}X$ and $B\in\mathsf{Sub}Y$, we have $ff^{-1}B=B\wedge \mathsf{Im}f$ and $f^{-1}fA=A\vee \mathsf{Ker }f$.
\end{axiom}


The next lemma gives the `Restricted Modular Law': a restricted version of the standard modular law. Notice that this result follows from Axiom~\ref{ax1} and Axiom~\ref{ax2} only. The proof is reproduced from \cite{DNA IV}.

\begin{lemma}[Restricted Modular Law, \cite{DNA IV}]\label{RML}
    For any three subgroups, $X$, $Y$, and $Z$ of a group $G$, if either $Y$ is normal and $Z$ is conormal, or $Y$is conormal and $X$ is normal, then we have:
    \begin{equation*}
        X\subseteq Z \Rightarrow X \vee(Y\wedge Z)=(X\vee Y)\wedge Z.
    \end{equation*}
\end{lemma}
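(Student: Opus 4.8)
The plan is to exploit the self-duality of the framework to cut the work in half, and then settle the surviving case by a short image chase. Dualizing the implication under the first hypothesis --- reading \emph{normal} as \emph{conormal} and vice versa, interchanging $\vee$ with $\wedge$, and reversing every inclusion --- turns ``$Y$ normal, $Z$ conormal, $X\subseteq Z$'' into ``$Y$ conormal, $Z$ normal, $Z\subseteq X$'', and turns the conclusion $X\vee(Y\wedge Z)=(X\vee Y)\wedge Z$ into $X\wedge(Y\vee Z)=(X\wedge Y)\vee Z$. Relabelling $X\leftrightarrow Z$ and using commutativity of meet and join, this dual statement is precisely the assertion under the \emph{second} hypothesis. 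Since every axiom in play is self-dual, it therefore suffices to prove the lemma assuming that $Y$ is normal and $Z$ is conormal; the other case then comes for free.

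So I would assume $Y=\mathsf{Ker}(f)$ for some $f\colon G\to H$ and $Z=\mathsf{Im}(g)$ for some $g\colon K\to G$. The key idea is to trade the two lattice operations for image operations via Axiom~\ref{ax2}: joining with the normal subgroup $Y$ is the operator $f^{-1}f(-)$, since $f^{-1}fA=A\vee\mathsf{Ker}(f)$, and meeting with the conormal subgroup $Z$ is the operator $gg^{-1}(-)$, since $gg^{-1}B=B\wedge\mathsf{Im}(g)$. The hypothesis $X\subseteq Z=\mathsf{Im}(g)$ is then the linchpin: it gives $gg^{-1}X=X\wedge Z=X$, so that $X=g(g^{-1}X)$ exhibits $X$ as a direct image along $g$, and consequently $fX=f(gg^{-1}X)=(fg)(g^{-1}X)$ is a direct image along the composite $fg\colon K\to H$.

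With this in hand the computation is a straight chase. On one side, $(X\vee Y)\wedge Z=gg^{-1}(f^{-1}fX)$; collapsing $g^{-1}f^{-1}$ to $(fg)^{-1}$ by the functoriality of inverse images (F2), rewriting $(fg)^{-1}fX=(fg)^{-1}(fg)(g^{-1}X)=g^{-1}X\vee\mathsf{Ker}(fg)$ via Axiom~\ref{ax2}, and pushing forward by $g$ using that direct image preserves joins (G3), I land on $gg^{-1}X\vee g\,\mathsf{Ker}(fg)=X\vee g\,\mathsf{Ker}(fg)$. On the other side, $X\vee(Y\wedge Z)=X\vee gg^{-1}Y=X\vee g\,\mathsf{Ker}(fg)$, again by (F2), since $g^{-1}Y=g^{-1}f^{-1}1=(fg)^{-1}1=\mathsf{Ker}(fg)$. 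The two sides coincide, so the identity holds --- in fact as an equality obtained directly, without even invoking the generic lattice inequality $X\vee(Y\wedge Z)\subseteq(X\vee Y)\wedge Z$.

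I expect the only real friction to be bookkeeping: keeping the contravariance of inverse images straight so that $g^{-1}f^{-1}=(fg)^{-1}$ lines up with (F2), and, more conceptually, recognizing that the assumption $X\subseteq Z$ is exactly what makes $fX$ a direct image along $fg$. Without that assumption the meet with $Z$ refuses to collapse and the modular identity genuinely fails, so this is where the hypothesis must be spent; everything else is a mechanical application of Axiom~\ref{ax2}, (F2) and the monotonicity and join-preservation facts (G1)--(G3).
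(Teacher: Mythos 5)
Your proof is correct and follows essentially the same route as the paper's: reduce to the first case by duality, witness $Y$ and $Z$ by morphisms, and chase subgroups using Axiom~\ref{ax2}, (F2) and (G3) through the composite of the two witnessing morphisms. The only difference is organizational --- the paper runs a single chain of equalities from $X\vee(Y\wedge Z)$ to $(X\vee Y)\wedge Z$, whereas you evaluate both sides to the common expression $X\vee g\,\Ker(fg)$, which is precisely the intermediate term the paper's chain passes through implicitly.
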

\begin{proof}
    Note the two cases are dual, so it suffices to prove the first one (where $Y$ is normal and $Z$ is conormal).
    Assume $Y=\Ker g$ and $Z=\im f$ for some morphisms $f$ and $g$ respectively.
    Thus, if $X\subseteq Z=\im f$, we have $X=X\wedge \im f$ and hence:
    \begin{align*}
        X\vee (Y \wedge Z) & = X\vee (\Ker g\wedge \im f)
        =(X\wedge \im f)\vee (\Ker g\wedge \im f)         \\
                           & =ff^{-1}X \vee ff^{-1}\Ker g
        =f(f^{-1}X\vee f^{-1}\Ker g)                      \\
                           & =f(f^{-1}X\vee \Ker gf)
        =f(gf)^{-1}(gf) f^{-1}X                           \\
                           & =f(gf)^{-1}g(ff^{-1}X)
        =f(gf)^{-1}g(X\wedge \im f)                       \\
                           & =f(gf)^{-1}gX
        =ff^{-1}g^{-1}gX                                  \\
                           & =ff^{-1}(X\vee \Ker g)
        =(X\vee \Ker g)\wedge \im f                       \\
                           & =(X\vee Y)\wedge Z.\qedhere
    \end{align*}
\end{proof}

\begin{example}
    The join $X\vee Y$ of subgroups $X$ and $Y$ of an (ordinary) group $G$ is given by the (set-theoretic) intersection of all subgroups which contain $X$ and $Y$. The join may be given by $X\vee Y =XY$ (the product of the subgroups given by $XY=\{ xy\mid x\in X, y\in Y\}$) in the special case when the subgroups commute (i.e., $XY=YX$). The conditions in the statement of the Restricted Modular Law guarantee that either $X$ or $Y$ is normal and hence that $X$ and $Y$ commute. Therefore for ordinary groups, the Restricted Modular Law follows from the modular property of groups which states that if $X$, $Y$ and $Z$ are subgroups of a group $G$ with $X\subseteq Z$, then $(XY) \cap Z= X(Y\cap Z)$ (note that in the case when $Y$ is normal, $Y\cap Z$ also commutes with $X$ since $X\subseteq Z$).
\end{example}

The remaining axioms, along with the Restricted Modular Law, allow us to establish certain isomorphism theorems and diagram lemmas, such as the Butterfly Lemma and the Middle $3\times 3$ Lemma, in this context.

\begin{remark}
    Note that it was missed in \cite{DNA IV} that the Restricted Modular Law is needed for the proof of the Butterfly Lemma: the Restricted Modular Law would be needed to show that the dashed zigzag on the butterfly diagram in the proof does induce a homomorphism.
\end{remark}


In the context of ordinary group theory, for any subgroup $S$ of a group $G$, if we consider $S$ as a group, the embedding morphism $\iota_S \colon S\rightarrow G$ has the following universal property: $\mathsf{Im} \iota_S \subseteq S$ and if $f\colon F\rightarrow G$ is any morphism such that $\mathsf{Im }f\subseteq S$, then there exists a unique morphism $u\colon F\rightarrow S$ such that $f=\iota_{S}u$, i.e., such that the diagram
$$\xymatrix {S \ar[r] ^{\iota_S} & G\\ F\ar@{.>}[u]^u  \ar[ur]_f}$$
commutes. In the context of a noetherian form, we capture this phenomenon as follows:
\begin{definition}
    Given a subgroup $S$ of a group $G$, a morphism $m:M\rightarrow G$ is said to be an \textit{embedding} associated to $S$ if $\im m\subseteq S$ and for any morphism $m'\colon M'\rightarrow G$ such that $\im m'\subseteq S$, there exists a unique homomorphism $u\colon M'\rightarrow M$ such that $m'=mu$.
\end{definition}
The dual notion of this is:
\begin{definition}
    Given a subgroup $S$ of a group $G$, a morphism $e\colon G\rightarrow H$ is said to be a \textit{projection} associated to $S$ if $S\subseteq \Ker e$ and for any morphism $e'\colon G\rightarrow H'$ such that $S\subseteq \Ker e'$, there exists a unique morphism $v\colon H\rightarrow H'$ such that $e'=ve$.
\end{definition}

When $S$ is a conormal subgroup of $G$, we write $\iota_{S}\colon  S/1\rightarrow G$ (sometimes $\iota_{S}\colon  S\rightarrow G$) to denote an embedding associated to $S$, when it exists. Dually, when $S$ is a normal subgroup of $G$, we write $\pi_{S}\colon G\rightarrow G/S$ to denote a projection associated to $S$.

\begin{axiom}\label{ax3} Every conormal subgroup has an embedding associated to it and every normal subgroup has a projection associated to it (in other words, $\iota_S$ and $\pi_S$ are defined whenever $S$ is conormal and normal, respectively).
\end{axiom}

Embeddings allow to define relative normality of subgroups.

\begin{definition}\label{Def Relative Normality}
    For two subgroups $A$ and $B$ of a group $G$, we say that $B$ is \emph{normal to} $A$, denoted $B\lhd A$, if:
    \begin{enumerate}
        \item[(RN1)]  $B\subseteq A$,
        \item[(RN2)]  $A$ is a conormal subgroup of $G$, and,
        \item[(RN3)]  $\iota_{A}^{-1}B$ is normal in $A/1$.
    \end{enumerate}
\end{definition}

To complete the description of the framework, the following axioms remain:

\begin{axiom}\label{ax4}
    Any morphism $f\colon X\rightarrow Y$ factorizes as $f=\iota_{\mathsf{Im }f}h\pi_{\mathsf{Ker }f}$, where $h$ is an isomorphism.
\end{axiom}
\begin{axiom}\label{ax5} The join of any two normal subgroups is normal and the meet of any two conormal subgroups is conormal.
\end{axiom}

The following axiom was not included in \cite{DNA IV} but is necessary to prove homological diagram lemmas. It will be used in the proof on exactly one homological diagram lemma in this paper.
\begin{axiom}\label{ax6}
    Consider any two subgroups $C,N$ of a group $G$ such that $C$ is conormal and $N$ is normal. If $C\subseteq N$, then there exists a binormal subgroup $B$ such that $C\subseteq B\subseteq N$.
\end{axiom}

Notice that almost all examples of noetherian forms considered in \cite{DNA IV} have the property every subgroup is conormal or every subgroup is normal. In such forms, Axiom~\ref{ax6} is automatically satisfied.

\section{An Example: Słomiński Algebras}

The axioms detailed in the previous section have been chosen in such a way that they hold for the context of ordinary groups. Anyone sufficiently familiar with group theory will immediately agree that all stated axioms hold true for groups under the interpretation suggested at the start of the previous section. To give the reader a taste for how little of the group properties is actually required for these axioms to hold, we demonstrate their validity in the wider class of Słomiński algebras \cite{slominski}. We have already mentioned in the previous section various examples that lie within the framework of a noetherian form, along with references where the reader may find out more about these examples. The example of Słomiński algebras falls under the class of examples given by semi-abelian categories --- the category of Słomiński algebras is a semi-abelian category --- so the fact that the axioms presented in the previous section hold for Słomiński algebras is not new.


\begin{definition}
    A \emph{Słomiński algebra} is a set $X$ together with two binary operations, $p$ and $d$, and a constant (nullary operation) $0$ that satisfies the following conditions:
    \begin{enumerate}
        \item $d(x,x)=0$, for all $x\in X$,
        \item $p(d(x,y),y)=x$, for all $x,y\in X$.
    \end{enumerate}
\end{definition}

We now describe an interpretation of a noetherian form.
We shall use some basic notions of universal algebra for Słomiński algebras, such as \emph{subalgebra}, \emph{homomorphism}, \emph{isomorphism} and \emph{congruence}, as well as basic results from universal algebra relating to these notions (see e.g.,~\cite{univalg} for more details).

The subobject poset for a Słomiński algebra $X$ is the poset of its subalgebras. Morphisms in this context will be the homomorphisms between Słomiński algebras. For a homomorphism $f\colon X\rightarrow Y$ between Słomiński algebras, and for subalgebras $A$ of $X$ and $B$ of $Y$, the direct image of $A$ and the inverse image $B$ under $f$ are given as expected, $fA=\{ f(x)\mid x\in A\}$ and $f^{-1}B =\{x\in X \mid  f(x)\in B\}$, respectively. Hence the kernel and image of $f$ are given by $\Ker f=f^{-1}0 =\{ x\in X \mid f(x)=0\}$ and $\im f= f(X) =\{ f(x)\mid x\in X\}$, respectively.

Notice that, for an (ordinary) group $G$ and any two elements $a$ and $b$ of $G$, we define the operation $p$ by $p(a,b)=a\cdot b$ and the operation $d$ by $d(a,b)=a\cdot (b^{-1})$. This makes the group $G$ into a Słomiński algebra. Moreover, Słomiński algebra homomorphisms between groups are precisely group homomorphisms and the subalgebras of a group are subgroups. In this case, kernels and images of homomorphisms obtain their usual meaning.

\begin{lemma}
    Axiom~\ref{ax1} holds for the Słomiński algebra interpretation.
\end{lemma}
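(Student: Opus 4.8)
The plan is to verify each component of Axiom~\ref{ax1} in turn for the Słomiński algebra interpretation, treating the categorical structure, the poset structure, and the Galois connection as three separate blocks. First I would check that Słomiński algebras and their homomorphisms form a category: composition of homomorphisms is associative (inherited from composition of functions on underlying sets), and for each algebra $X$ the identity function $\mathsf{id}_X$ is a homomorphism serving as the identity morphism. This is routine since homomorphisms of universal algebras compose and the identity map trivially preserves the operations $p$, $d$ and the constant $0$.

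Next I would verify that for each Słomiński algebra $X$, the set $\mathsf{Sub}X$ of subalgebras, ordered by inclusion, is a poset. Reflexivity, transitivity, and antisymmetry (P1--P3) follow immediately because subalgebra inclusion is just set-theoretic inclusion restricted to subalgebras, and set inclusion is already a partial order. The only thing to note is that a subalgebra is determined by its underlying subset, so antisymmetry of $\subseteq$ transfers without difficulty.

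The substantive part is establishing the Galois connection (G): for $f\colon X\to Y$, $A\in\mathsf{Sub}X$ and $C\in\mathsf{Sub}Y$, one must show $fA\subseteq C \Leftrightarrow A\subseteq f^{-1}C$. Before this can even be stated, I must confirm that the direct image $fA=\{f(a)\mid a\in A\}$ is a subalgebra of $Y$ and the inverse image $f^{-1}C=\{x\in X\mid f(x)\in C\}$ is a subalgebra of $X$; both follow from $f$ being a homomorphism together with $A$, $C$ being closed under the operations, so $fA$ and $f^{-1}C$ are closed under $p$, $d$ and contain $0$. With these maps well-defined, the biconditional in (G) is the standard elementwise verification: if $fA\subseteq C$ then every $a\in A$ has $f(a)\in C$, i.e. $a\in f^{-1}C$; conversely if $A\subseteq f^{-1}C$ then $f(a)\in C$ for all $a\in A$, i.e. $fA\subseteq C$. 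I would also record properties (F1) and (F2) as part of functoriality --- $\mathsf{id}_XS = \mathsf{id}_X^{-1}S = S$, and $g(fS)=(gf)S$ together with $f^{-1}(g^{-1}T)=(fg)^{-1}T$ --- all of which reduce to elementary facts about images and preimages under composite functions.

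The main obstacle, such as it is, lies entirely in confirming closure: that direct and inverse images of subalgebras are again subalgebras. This is where the specific axioms of a Słomiński algebra enter, since one must check stability under $p$ and $d$ and containment of $0$ rather than merely argue set-theoretically. For the inverse image this is automatic for any homomorphism; for the direct image it uses that $f$ commutes with the operations so that, for instance, $p(f(a),f(a'))=f(p(a,a'))\in fA$. Once closure is secured, every remaining clause of Axiom~\ref{ax1} is a formal consequence of the corresponding fact about functions between sets, and the functoriality into the category of posets with Galois connections follows from (F1), (F2) and the uniqueness of adjoints. I expect no genuine difficulty here; the lemma is essentially a bookkeeping exercise, which is precisely why the authors relegate it to the Słomiński-algebra warm-up rather than the semi-abelian generality.
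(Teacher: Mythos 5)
Your proposal is correct and follows essentially the same route as the paper's proof: verify the category axioms elementwise, note that the subalgebra poset is inherited from set inclusion, check that direct and inverse images of subalgebras are closed under $p$ and $d$ (the only place the Słomiński operations genuinely enter), and then establish the Galois connection biconditional by the standard elementwise argument. Your additional remark recording (F1) and (F2) explicitly is a slight strengthening in completeness over the paper's write-up, which leaves the functoriality clauses implicit, but it is not a different approach.
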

\begin{proof}
    The class of Słomiński algebras forms a category since:
    \begin{itemize}
        \item For morphisms $f\colon X\rightarrow Y$ and $g\colon Y\rightarrow Z$ between Słomiński algebras, the composite ${gf\colon X\rightarrow Z}$ is given by $gf(x)=g(f(x))$. This is indeed a morphism since $$gf(d(x,y))=g(f(d(x,y)))=g(d(f(x),f(y)))=d(gf(x), gf(y))$$ and $gf(p(x,y))=p(gf(x), gf(y))$ by a similar calculation.
        \item Composition is associative since $$(h(gf))(x)=h(gf(x))=h(g(f(x)))=hg(f(x))=((hg)f)(x).$$
        \item For a Słomiński algebra $X$, the identity mapping $\mathsf{id}_X \colon X\rightarrow X$ (given by $\mathsf{id}_X (x)=x$) is a morphism since $\mathsf{id}_X (d(x,y))=d(x,y)=d(\mathsf{id}_X (x), \mathsf{id}_X (y))$ and similarly, $\mathsf{id}_X (p(x,y))=p(\mathsf{id}_X (x), \mathsf{id}_X (y))$. Moreover, for any morphism $f\colon X\rightarrow Y$, we have $f(x)=f(\mathsf{id}_X (x))= f\mathsf{id}_X (x)$ and therefore $f=f\mathsf{id}_X$. We also obtain $g=\mathsf{id}_X g$, for any morphism $g\colon Z\rightarrow X$, by a similar argument.
    \end{itemize}
    Moreover, the set of subalgebras, $\mathsf{Sub} X$ of a given algebra $X$, under the operation of set inclusion ($\subseteq$), forms a poset since this is true for any subset of the power set of a set along with set inclusion.

    Notice that both the direct image of a subalgebra $A$ and the inverse image of a subalgebra $B$ are subalgebras, since:
    \begin{itemize}
        \item If  $f(x), f(y)\in fA$, then $$d(f(x),f(y))=f(d(x,y))\in fA$$ and similarly  $$p(f(x),f(y))=f(p(x,y))\in fA .$$
        \item If $x,y\in f^{-1}B$, then $$f(d(x,y))=d(f(x),f(y))\in B$$ and hence $d(x,y)\in f^{-1}B$. Similarly, $p(x,y)\in f^{-1}B$.
    \end{itemize}
    Therefore, the direct and the inverse images of a subalgebra under a morphism $f$ give rise to maps $f(-)\colon \mathsf{Sub}X\rightarrow \mathsf{Sub}Y$ and $f^{-1}(-)\colon \mathsf{Sub Y} \rightarrow \mathsf{Sub} X$,
    $$A\mapsto f(A)=fA,\quad B\mapsto f^{-1}(B)=f^{-1}B.$$
    Moreover, if $fA \subseteq B$, then $f(x)\in B$ for all $x\in A$ and hence $A\subseteq f^{-1}B$. Similarly, if $A\subseteq f^{-1} B$, then $f(x)\in B$ for all $x\in A$ and hence $fA\subseteq B$. Therefore, $fA\subseteq B \Leftrightarrow A\subseteq f^{-1}B$, showing that the maps $f(-)$ and $f^{-1}(-)$ constitute a monotone Galois connection between $\mathsf{Sub}X$ and $\mathsf{Sub}Y$. \qedhere
\end{proof}

\begin{lemma}\label{ax2slo}
    Axiom~\ref{ax2} holds for the Słomiński algebra interpretation.
\end{lemma}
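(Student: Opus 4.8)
The plan is to verify the two parts of Axiom~\ref{ax2} in turn: first that each poset of subalgebras is a bounded lattice, and then the two Galois-connection identities $ff^{-1}B=B\wedge\im f$ and $f^{-1}fA=A\vee\Ker f$.

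For the lattice structure, I would observe that the meet of a family of subalgebras is their set-theoretic intersection (an intersection of subalgebras is again closed under $p$ and $d$ and contains the constant $0$), while the join is the smallest subalgebra containing the union, both of which exist. The top element of $\mathsf{Sub}X$ is $X$ itself. For the bottom element I would check that $\{0\}$ is a subalgebra: the first Słomiński identity gives $d(0,0)=0$, and substituting $x=y=0$ into the second gives $p(0,0)=p(d(0,0),0)=0$, so $\{0\}$ is closed under both operations; since every subalgebra must contain the constant $0$, it is the smallest subalgebra. Hence $\mathsf{Sub}X$ is a bounded lattice.

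The identity $ff^{-1}B=B\wedge\im f$ is then essentially set-theoretic: as meet is intersection, I would compute $ff^{-1}B=\{f(x)\mid f(x)\in B\}=B\cap f(X)=B\wedge\im f$, an equality valid for any function that restricts correctly to subalgebras. The substantive identity is $f^{-1}fA=A\vee\Ker f$, and this is where the defining identities of a Słomiński algebra do the real work. The inclusion $A\vee\Ker f\subseteq f^{-1}fA$ is routine: $A\subseteq f^{-1}fA$ holds for any Galois connection, and $\Ker f\subseteq f^{-1}fA$ because $0\in A$ forces $0=f(0)\in fA$, so any $x$ with $f(x)=0$ satisfies $f(x)\in fA$; as $f^{-1}fA$ is a subalgebra it therefore contains the join. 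For the reverse inclusion, take $x\in f^{-1}fA$, so that $f(x)=f(a)$ for some $a\in A$. The key move is to examine the ``difference'' $d(x,a)$: since $f$ is a homomorphism and $f(x)=f(a)$, the first identity gives $f(d(x,a))=d(f(a),f(a))=0$, so $d(x,a)\in\Ker f$; and the second identity gives $x=p(d(x,a),a)$. Thus $x$ arises by applying $p$ to an element of $\Ker f$ and an element of $A$, both lying in the subalgebra $A\vee\Ker f$, whence $x\in A\vee\Ker f$.

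The main obstacle is precisely this last inclusion, as it is the only step where normality-type behaviour could conceivably fail. The two Słomiński identities are exactly what is needed to overcome it: they guarantee that $d(x,a)$ lands in the kernel and that $p$ reconstructs $x$ from $a$ and $d(x,a)$, replacing the familiar group-theoretic computation $x=(xa^{-1})a\in(\Ker f)A$. Everything else reduces to elementary facts about subsets and closure under the operations.
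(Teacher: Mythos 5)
Your proof is correct and follows essentially the same route as the paper's: the lattice part via intersections, the identity $ff^{-1}B=B\wedge\im f$ by a direct set-theoretic computation, and the key inclusion $f^{-1}fA\subseteq A\vee\Ker f$ via exactly the paper's central trick of forming $d(x,a)\in\Ker f$ and reconstructing $x=p(d(x,a),a)$ inside the subalgebra $A\vee\Ker f$. If anything, you are slightly more careful than the paper in one spot, explicitly checking that $\{0\}$ is closed under $p$ via $p(0,0)=p(d(0,0),0)=0$, where the paper merely asserts that $\{0\}$ is a subalgebra.
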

\begin{proof}
    For a Słomiński algebra $X$, the subset $0$ (consisting of just the constant $0$) and the subset $X$ are both subalgebras and therefore the poset $\mathsf{Sub}X$ has bottom and top elements given by $0$ and $X$, respectively.
    Given two subalgebras $C$ and $D$ of $X$, we may define the meet of $C$ and $D$, denoted $C\wedge D$, to be the largest subalgebra of $X$ contained in both $C$ and $D$. It may be verified that this turns out to be the set-theoretic intersection of $C$ and $D$ and thus the meet of two subalgebras exists (since it may be verified that the intersection is closed under the operations $p$ and $d$ and moreover, contains the constant $0$). In a dual manner, we define the join of $C$ and $D$, denoted $C\vee D$, to be the smallest subalgebra of $X$ containing both $C$ and $D$. It may be verified that the join of $C$ and $D$ is given by the intersection of subalgebras of $A$ containing both $C$ and $D$ (note that $X$ is a subalgebra of $X$ that contains $C$ and $D$ and hence this intersection is not empty) and thus the join of two subalgebras exists (since it may be verified that the intersection is closed under the operations $p$ and $d$ and moreover, contains the constant $0$).

    Suppose we are given a morphism $f\colon X\rightarrow Y$ and subalgebras $A$ of $X$ and $B$ of $Y$. We will show that $ff^{-1}B= B\wedge \im f$.
    Let $x\in f^{-1}B$. Then $f(x)\in B$ and hence $ff^{-1}B\subseteq B$. Moreover, $ff^{-1}B\subseteq \im f$ and hence $ff^{-1}B\subseteq B\wedge \im f$. Now take $y\in B\wedge \im f$. Then $y=f(x)$ for some $x\in X$. Therefore $f(x)=y\in B$ and hence $x\in f^{-1}B$. Hence $B\wedge \im f \subseteq ff^{-1}B$ and thus $ff^{-1}B=B\wedge \im f$.

    We will now show that $f^{-1}fA= A\vee \Ker f$.
    Let $x\in A$. Then $f(x)\in fA$ and hence $x\in f^{-1}fA$. Therefore $A\subseteq f^{-1}fA$. Now let $x\in \Ker f$. Then $f(x)=0\in fA$ and hence $x\in f^{-1}fA$. Therefore $\Ker f \subseteq f^{-1}fA$ and hence $A\vee \Ker f\subseteq f^{-1}fA$. Next, let $x\in f^{-1}fA$. Then $f(x)\in fA$ and hence there exists $a\in A$ such that $f(x)=f(a)$. Therefore, $0=d(f(x),f(a))=f(d(x,a))$ and hence $d(x,a)\in \Ker f$. Hence $x=p(d(x,a),a)\in A\vee \Ker f$ and thus $f^{-1}fA\subseteq A\vee \Ker f$. Therefore $f^{-1}fA= A\vee \Ker f$.
\end{proof}

\begin{lemma}\label{xeqy} In every Słomiński algebra,
    $d(x,y)=0\Leftrightarrow x=y $.
\end{lemma}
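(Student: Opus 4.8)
My plan is to treat the two implications separately, since only one of them requires any work. The reverse implication $x=y\Rightarrow d(x,y)=0$ is immediate: substituting $y=x$ into the first defining condition $d(x,x)=0$ gives the result directly, with no further input needed.

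For the forward implication, the natural tool is the second defining condition $p(d(x,y),y)=x$, which lets me recover $x$ from the ``difference'' $d(x,y)$ and $y$. The idea is that setting $d(x,y)=0$ should collapse this to an expression forcing $x=y$. Concretely, I would substitute the hypothesis $d(x,y)=0$ into $x=p(d(x,y),y)$ to obtain $x=p(0,y)$, so everything reduces to identifying the value of $p(0,y)$.

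The one auxiliary fact I need is therefore the identity $p(0,y)=y$, valid for every $y\in X$. I would derive it by specializing the second defining condition at $x=y$: this yields $p(d(y,y),y)=y$, and since $d(y,y)=0$ by the first defining condition, substituting gives $p(0,y)=y$. Chaining these together, $x=p(d(x,y),y)=p(0,y)=y$, completes the forward direction and hence the equivalence.

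The main (and essentially only) obstacle here is a conceptual rather than a computational one: recognizing that the second defining condition does not by itself mention the constant $0$, so one must first extract the lemma $p(0,y)=y$ by feeding the diagonal case $d(y,y)=0$ into it. Once that small observation is made, the rest is a two-step substitution, so I do not anticipate any genuine difficulty in carrying the argument out.
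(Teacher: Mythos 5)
Your proposal is correct and follows essentially the same argument as the paper: the paper's proof is the single chain $x=p(d(x,y),y)=p(0,y)=p(d(y,y),y)=y$, which is exactly your computation with the auxiliary identity $p(0,y)=y$ inlined rather than stated separately. Isolating that identity as a small lemma is a purely presentational difference; the substance is identical.
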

\begin{proof}
    By the definition of a Słomiński algebra, if $x=y$, then $d(x,y)=0$. It remains to show that if $d(x,y)=0$, then $x=y$. Suppose $d(x,y)=0$, then $x=p(d(x,y),y)=p(0,y)=p(d(y,y),y)=y$.
\end{proof}

\begin{lemma}\label{Słomiński3}
    Axiom~\ref{ax3} holds for the Słomiński algebra interpretation.
\end{lemma}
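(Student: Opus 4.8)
The plan is to handle the two halves of Axiom~\ref{ax3} separately, treating embeddings first (which is routine) and projections second, where the real content lies. For the embedding part, given a conormal subalgebra $S$ of $G$ --- and in fact every subalgebra of $G$ is conormal, being the image of its own inclusion --- I would take $M=S$ regarded as a Słomiński algebra in its own right and let $m=\iota_S\colon S\to G$ be the inclusion homomorphism. Then $\im\iota_S=S\subseteq S$, and given any $m'\colon M'\to G$ with $\im m'\subseteq S$, the assignment $u(x)=m'(x)$ lands in $S$ and defines the unique homomorphism with $\iota_S u=m'$; uniqueness is immediate from the injectivity of $\iota_S$.

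For the projection part, suppose $S$ is normal, say $S=\Ker f$ for some $f\colon G\to H$. First I would record that $f(0)=f(d(x,x))=d(f(x),f(x))=0$, so $S=\{x\mid f(x)=0\}$ is precisely the $0$-class of the kernel congruence $\theta_f=\{(x,y)\mid f(x)=f(y)\}$. I would then take $e=\pi_{\theta_f}\colon G\to G/\theta_f$ to be the canonical quotient homomorphism, so that $\Ker e$ equals the $0$-class of $\theta_f$, namely $S$; thus $S\subseteq\Ker e$ holds, with equality.

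The crux is the universal property, and the key fact I would establish is that on any Słomiński algebra a congruence is completely determined by its $0$-class: one has $(x,y)\in\theta$ if and only if $d(x,y)$ lies in the $0$-class of $\theta$. Indeed, if $(x,y)\in\theta$ then $(d(x,y),d(y,y))=(d(x,y),0)\in\theta$; conversely, if $(d(x,y),0)\in\theta$ then applying the unary operation $p(-,y)$ and using the axiom $p(d(x,y),y)=x$ together with $p(0,y)=p(d(y,y),y)=y$ gives $(x,y)\in\theta$. Granting this, for any $e'\colon G\to H'$ with $S\subseteq\Ker e'$ and any $(x,y)\in\theta_f$ I would argue that $d(x,y)\in S\subseteq\Ker e'$, so $d(e'(x),e'(y))=e'(d(x,y))=0$, whence $e'(x)=e'(y)$ by Lemma~\ref{xeqy}. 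Thus $e'$ is constant on $\theta_f$-classes, and the standard factorization of homomorphisms through a quotient in universal algebra yields the unique $v\colon G/\theta_f\to H'$ with $e'=ve$.

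The main obstacle --- and the only place where the defining identities of a Słomiński algebra are genuinely used --- is the determination of a congruence by its $0$-class; everything else reduces to the familiar universal-algebra machinery of quotients by congruences. This is the Słomiński analogue of the fact that, for ordinary groups, a congruence is determined by the normal subgroup occurring as the class of the identity.
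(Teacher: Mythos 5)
Your proof is correct and follows essentially the same route as the paper's: the embedding half is handled by subalgebra inclusions (noting every subalgebra is conormal), and the projection half by forming the quotient of $G$ by the kernel congruence $\theta_f=\{(x,y)\mid f(x)=f(y)\}$ of a morphism $f$ witnessing normality of $S$, with the universal property resting on the same computation $f(x)=f(y)\Rightarrow d(x,y)\in S\subseteq \Ker e'\Rightarrow d(e'(x),e'(y))=0\Rightarrow e'(x)=e'(y)$. The only difference is presentational: you isolate the fact that a congruence on a Słomiński algebra is determined by its $0$-class and invoke standard universal-algebra machinery for quotients and factorizations, whereas the paper carries out those verifications (congruence, well-definedness of the quotient operations, uniqueness of the induced map) explicitly by hand.
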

\begin{proof}
    Consider a subalgebra $A$ of a Słomiński algebra $X$. Define the map $\iota_A\colon A \rightarrow X$ by $\iota_A (x)=x$ for all $x\in A$. Then $\iota_A$ is a morphism since $\iota_A (d(x,y))=d(x,y)=d(\iota_A (x), \iota_A (y))$ and $\iota_A (p(x,y))=p(\iota_A (x) , \iota_A (y))$ by a similar argument. Moreover, given any morphism $f\colon B\rightarrow X$ such that $\im f\subseteq A$, we may define $f'\colon B\rightarrow A$ by $f'(x)=f(x)$. Then clearly $f'$ is a morphism since $f$ is a morphism. Furthermore, $\iota_A f'(x)=\iota_A f(x)=f(x)$ and hence $f=\iota_A f'$. If $f=\iota_A f''$, for a morphism $f''\colon B\rightarrow A/1$, then $\iota_A f' =\iota_A f''$ and hence $f'(x)=f''(x)$ for all $x\in B$. Therefore $f'=f''$ and $f'$ is the unique morphism such that $f=\iota_A f'$. Hence $\iota_A$ is the embedding associated to $A$. Note that every subalgebra is conormal.

    Now suppose $B$ is a normal subalgebra of $X$. Then there must exist a morphism $g\colon X\rightarrow Y$ such that $\Ker g=B$. We may then define a relation on $X$ by $R=\{(x,y)\mid g(x)=g(y)\}$. We will write $y_1Ry_2$ if $(y_1,y_2)\in R$. Then this relation is an equivalence relation since:
    \begin{itemize}
        \item $g(x)=g(x)$ hence $xRx$ for all $x\in X$. Thus the relation is reflexive.
        \item If $xRy$, then $g(x)=g(y)$ and thus $g(y)=g(x)$. Hence $yRx$ and the relation is symmetric.
        \item If $xRy$ and $yRz$, then $g(x)=g(y)$ and $g(y)=g(z)$. Therefore $g(x)=g(z)$ and thus $xRz$. Hence the relation is transitive.
    \end{itemize}
    Now if $g(x_1)=g(y_1)$ and $g(x_2)=g(y_2)$ then $$g(d(x_1,x_2))=d(g(x_1),g(x_2))=d(g(y_1),g(y_2))=g(d(y_1,y_2)) . $$Therefore if $(x_1, y_1),(x_2,y_2)\in R$, then $d((x_1,y_1), (x_2,y_2))=(d(x_1,x_2), d(y_1,y_2))\in R $; if $(x_1, y_1),(x_2,y_2)\in R$, then $p((x_1,y_1), (x_2,y_2))=(p(x_1,x_2),p(y_1,y_2))\in R$, similarly. Therefore $R$ is a congruence. We then consider the set $X/R$ and define the operations $p'([x],[y])=[p(x,y)]$ and $d'([x],[y])=[d(x,y)]$ on $X/R$ and define the constant $0'=[0]$. The operations $p'$ and $d'$ are well-defined since $R$ is a congruence. Then $X/R$ is a Słomiński algebra since:
    \begin{itemize}
        \item $d'([x],[x])=[d(x,x)]=[0]$ and
        \item $p'(d'([x],[y]),[y])=p'([d(x,y)],[y])=[x]$.
    \end{itemize}
    We will then denote $X/R$ as $X/B$ from now on (since the subalgebra $B$ defines the relation). Define a map $\pi_B \colon X\rightarrow X/B$ by $\pi_B (x)=[x]$. It may be verified that $\pi_B$ is a morphism --- this follows from the definitions of $d'$ and $p'$. Now
    \[
        \Ker \pi_B=\{ x\in X\mid[x]=[0]\}=\{x\in X \mid g(x)=g(0)\}=\{x\in X\mid g(x)=0\}=\{x\in X\mid x\in B\}=B.
    \]
    Suppose $h\colon X \rightarrow Y$ is a morphism such that $B\subseteq \Ker h$. Then define a map $h'\colon X/B \rightarrow Y$ by $h'([x])=h(x)$. Note that $h'$ is well-defined since if $[x]=[y]$, then $g(x)=g(y)$ and $0=d(g(x),g(y))=g(d(x,y))$. It follows that $d(x,y)\in B$. Therefore, $d(h'([x]),h'([y]))= d(h(x),h(y))=h(d(x,y))=0$ (since $B\subseteq \Ker h$) and thus $h([x])=h([y])$. Moreover, $h'$ is a morphism since $h'(d([x],[y]))=h'([d(x,y)])=h(d(x,y))=d(h(x),h(y))=d(h'([x]),h'([y]))$ and $h'(p([x],[y]))=p(h'([x]),h'([y]))$ by a similar argument. It is then clear that $h=h'\pi_B$ since $h(x)=h'([x])=h'\pi_B(x)$. Moreover, if $h=h''\pi_B$ then $h''[x]=h'[x]$ and thus $h''=h'$. This shows that $\pi_B$ is a projection associated to $B$.
\end{proof}

\begin{lemma}
    Axiom~\ref{ax4} holds for the Słomiński algebra interpretation.
\end{lemma}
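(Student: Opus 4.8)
The plan is to exhibit the isomorphism $h$ explicitly and then read off the factorization. First I would observe that both middle maps in the claimed factorization are available: $\Ker f$ is normal (being the kernel of $f$) and $\im f$ is conormal (every subalgebra is conormal, as recorded in the proof of Lemma~\ref{Słomiński3}), so Axiom~\ref{ax3} supplies a projection $\pi_{\Ker f}\colon X\to X/\Ker f$ and an embedding $\iota_{\im f}\colon \im f\to Y$. The key simplification is to build the quotient $X/\Ker f$ using the morphism $f$ itself in the congruence construction of Lemma~\ref{Słomiński3}; that is, take $R=\{(x,y)\mid f(x)=f(y)\}$, so that $[x]=[y]$ holds precisely when $f(x)=f(y)$, and $\pi_{\Ker f}(x)=[x]$.

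With this description in hand, I would define $h\colon X/\Ker f\to \im f$ by $h([x])=f(x)$ and verify its basic properties in turn. Well-definedness and landing in $\im f$ are immediate from the description of $R$ and from $f(x)\in\im f$. That $h$ is a homomorphism follows from a one-line computation using the definitions of the quotient operations $d'$, $p'$ together with $f$ being a homomorphism, e.g. $h(d'([x],[y]))=f(d(x,y))=d(f(x),f(y))=d(h([x]),h([y]))$, and similarly for $p'$. Injectivity is just the reverse reading of the equivalence $[x]=[y]\Leftrightarrow f(x)=f(y)$, and surjectivity is immediate since every element of $\im f$ is of the form $f(x)$.

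At this point $h$ is a bijective homomorphism, and to conclude that it is an isomorphism I would note that the set-theoretic inverse of a bijective homomorphism of Słomiński algebras is again a homomorphism (writing each argument as $h(u)$ and using that $h$ preserves $d$ and $p$), which provides a two-sided inverse in the sense of the framework. Finally I would evaluate $\iota_{\im f}\,h\,\pi_{\Ker f}$ on an arbitrary $x$, obtaining $\iota_{\im f}(h([x]))=\iota_{\im f}(f(x))=f(x)$, so that $f=\iota_{\im f}\,h\,\pi_{\Ker f}$ as required. I expect no genuine obstacle here; the only points needing a little care are matching the abstract quotient $X/\Ker f$ coming from Axiom~\ref{ax3} with the concrete congruence of $f$ --- handled cleanly by choosing $g=f$ in the construction --- and confirming that a bijective homomorphism genuinely qualifies as an isomorphism in the sense of the framework, rather than as a mere bijection of underlying sets.
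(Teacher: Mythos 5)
Your proposal is correct and follows essentially the same route as the paper: define $h\colon X/\Ker f\to \im f$ by $h([x])=f(x)$, check well-definedness, the homomorphism property, injectivity and surjectivity, and then read off $f=\iota_{\im f}\,h\,\pi_{\Ker f}$ by evaluation. The two points you flag for extra care (taking $g=f$ in the congruence construction of Lemma~\ref{Słomiński3}, and justifying that a bijective homomorphism is an isomorphism in the framework sense) are exactly the details the paper leaves implicit, so your write-up is if anything slightly more complete.
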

\begin{proof}
    Suppose $f\colon X\rightarrow Y$ is a morphism. We may define a mapping $h\colon X/\Ker f \rightarrow \im f$ by $h([x])=f(x)$. Note that $h$ is well-defined since $[x]=[y]\Rightarrow f(x)=f(y)\Rightarrow h([x])=h([y])$. Moreover, $h$ is a morphism since $$h(d([x],[y]))=h([d(x,y)])=f(d(x,y))=d(f(x),f(y))=d(h([x]), h([y]))$$ and $h(p([x],[y]))=p(h([x]),h([y]))$ (by a similar calculation). Moreover $h$ is surjective since for any $y\in \im f $, there exists $x\in X$ such that $f(x)=y$ and therefore we have $h([x])=y$. Furthermore, if $h([x])=h([y])$, then $f(x)=f(y)$ and thus $[x]=[y]$. This shows that $h$ is injective and therefore $h$ is an isomorphism. Notice that $\iota_{\im f}h \pi_{\Ker f}(x)=\iota_{\im f}h([x])=\iota_{\im f}(f(x))=f(x)$ and hence $f=\iota_{\im f}h\pi_{\Ker f}$. \qedhere
\end{proof}

\begin{lemma}
    Axiom~\ref{ax5} holds for the Słomiński algebra interpretation.
\end{lemma}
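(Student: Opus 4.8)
The plan is to split the statement into its two halves, of which the conormal one is immediate and the normal one carries all the content. For the meet of two conormal subalgebras, recall from the argument establishing Axiom~\ref{ax3} that \emph{every} subalgebra of a Słomiński algebra is conormal, since each subalgebra $A$ is the image of its embedding $\iota_A$. As the meet of two subalgebras is just their set-theoretic intersection (see the proof of Lemma~\ref{ax2slo}), hence again a subalgebra, it is automatically conormal. So only the first assertion, that the join of two normal subalgebras is normal, requires work.

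For that assertion I would pass to congruences and exploit the fact that Słomiński algebras form a Mal'cev variety. The key observation is that $M(x,y,z):=p(d(x,y),z)$ is a Mal'cev term: condition~(2) of the definition gives $M(x,y,y)=p(d(x,y),y)=x$, while taking both arguments of $d$ equal yields $p(0,y)=p(d(y,y),y)=y$, so that $M(x,x,y)=p(d(x,x),y)=p(0,y)=y$. By the classical Mal'cev theorem it then follows that congruences on any Słomiński algebra permute; alternatively, one can verify permutability by hand using these same two identities.

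Now let $B_1=\Ker g_1$ and $B_2=\Ker g_2$ be normal subalgebras. As in the proof establishing Axiom~\ref{ax3}, each $g_i$ determines a congruence $R_i=\{(x,y)\mid g_i(x)=g_i(y)\}$, and Lemma~\ref{xeqy} gives $x\,R_i\,y\Leftrightarrow d(x,y)\in B_i$ and, in particular, $y\,R_i\,0\Leftrightarrow y\in B_i$. By permutability $R:=R_1\circ R_2=R_2\circ R_1$ is a congruence, so its $0$-class $[0]_R=\Ker\pi_R$ is normal. Unwinding the composite, $[0]_R=\{\,x\mid \exists\,y\in B_2\text{ with }d(x,y)\in B_1\,\}$. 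I would then prove $[0]_R=B_1\vee B_2$ by two inclusions. If $x\in[0]_R$, pick such a $y$; then $d(x,y)\in B_1\subseteq B_1\vee B_2$ and $y\in B_2\subseteq B_1\vee B_2$, so $x=p(d(x,y),y)\in B_1\vee B_2$. Conversely $B_1\subseteq[0]_R$ (take $y=0$, using that $B_1$ is closed under $d$) and $B_2\subseteq[0]_R$ (take $y=x$, so $d(x,x)=0\in B_1$); since $[0]_R$ is a subalgebra it contains the smallest subalgebra $B_1\vee B_2$ containing $B_1\cup B_2$. Hence $B_1\vee B_2=[0]_R$ is normal.

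The main obstacle is the permutability step: the candidate set $\{\,x\mid\exists\,y\in B_2,\ d(x,y)\in B_1\,\}$ is not obviously closed under $p$ and $d$, and its being a subalgebra --- equivalently, the relation $R_1\circ R_2$ being transitive and symmetric --- is precisely what congruence permutability supplies. Once this is in place, every remaining verification reduces to the two defining identities $d(x,x)=0$ and $p(d(x,y),y)=x$ together with the description of the quotient already used for Axiom~\ref{ax3}.
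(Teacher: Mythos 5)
Your proof is correct, but it takes a genuinely different route from the paper's. The paper does not prove the axiom in its literal form (join of normals is normal, meet of conormals is conormal); instead it invokes the equivalence, noted before Lemma~\ref{direct image normal} and valid under the remaining axioms, between Axiom~\ref{ax5} and the statement that normal subalgebras are stable under direct images along projections (conormality being vacuous since every subalgebra is conormal). It then takes a projection $f\colon X\to Y$, a normal subalgebra $K=\Ker g$, and builds by hand a congruence $R$ on $Y$, namely $y_1Ry_2$ iff $y_1,y_2$ are $f$-images of elements identified by $g$, verifying that $\Ker\pi_R=fK$. You instead prove the axiom exactly as stated: the conormal half is dismissed for the same reason as in the paper, and for the normal half you observe that $M(x,y,z)=p(d(x,y),z)$ is a Mal'cev term, deduce congruence permutability, and identify $B_1\vee B_2$ with the $0$-class of the (permuting, hence congruence) composite $R_1\circ R_2$. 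Both arguments rest on the same infrastructure --- the congruence-quotient construction from Lemma~\ref{Słomiński3} and the characterization $x\,R_i\,y\Leftrightarrow d(x,y)\in B_i$ via Lemma~\ref{xeqy}. What the paper's route buys is independence from Mal'cev theory and a proof of the (formally stronger-looking) image-stability property directly; what your route buys is self-containedness --- you never need the equivalence of Axiom~\ref{ax5} with image stability, which itself depends on Axioms 1--4 --- together with a conceptual explanation of why joins of kernels are kernels: the join of normal subalgebras is the $0$-class of the join of the corresponding congruences, and permutability is exactly what makes that join equal to the relational composite. Your argument also generalizes verbatim to any Mal'cev (indeed any semi-abelian) variety, in line with the paper's closing remark of that section. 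One phrasing should be tightened: the $0$-class of $R_1\circ R_2$ being a subalgebra is not literally equivalent to $R_1\circ R_2$ being symmetric and transitive; what you need is that $R_1\circ R_2$ is a congruence (so that its $0$-class is a kernel of the quotient morphism), and that is what permutability delivers.
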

\begin{proof}
    Note that we should show that normal subalgebras are stable under projections and conormal subalgebras are stable under embeddings, which is equivalent to Axiom~\ref{ax5}. However, since all subalgebras are conormal, it suffices to show that normal subalgebras are stable under projections.

    Suppose $f\colon X\rightarrow Y$ is a projection (note that $f$ is surjective by Lemma~\ref{Słomiński3}) and $K$ is a normal subalgebra of $X$. Then there exists a morphism $g\colon X\rightarrow Z$ such the $\Ker g=K$. Define a relation $R$ on $Y$ by
    \[
        R=\{(y_1, y_2)\mid \text{$\exists x_1, x_2 \in X$  such that $f(x_1)=y_1$, $f(x_2)=y_2$, $g(x_1)=g(x_2)$} \}.
    \]
    Note that we write $y_1Ry_2$ if $(y_1,y_2)\in R$. Then $R$ is an equivalence relation since:
    \begin{itemize}
        \item for all $y\in Y$, since $f$ is surjective, there exists $x\in X$ such that $f(x)=y$ and clearly $g(x)=g(x)$. This shows that $yRy$ for all $y\in Y$ and hence the relation is reflexive.
        \item If $yRz$, then exist $w$, $x\in X$ such that $f(w)=y$, $f(x)=z$ and $g(w)=g(x)$. Then clearly $zRx$ by definition of $R$ and hence the relation is symmetric.
        \item If $yRz$ and $zRt$, then there exist $v$, $w$, $x\in X$ such that $f(v)=y$, $f(w)=z$, $f(x)=t$ and $g(v)=g(w)$, $g(w)=g(x)$. Then $g(v)=g(x)$ and $yRt$ and hence the relation is transitive.
    \end{itemize}
    Moreover, we will show that $R$ is a congruence. If $(y,z)$, $(t,s) \in R$, then there exist $a$, $b$, $c$, $d\in X$ such that $f(a)=y$, $f(b)=z$, $f(c)=t$, $f(d)=s$ and $g(a)=g(b)$, $g(c)=g(d)$. Then $f(d(a,c))=d(y,t)$, $f(d(b,d))=d(z,s)$ and $g(d(a,c))=g(d(b,d))$. Therefore we obtain $$(d(y,t), d(z,s))\in R.$$ Moreover, $(p(y,t), p(z,s))\in R$ by a similar argument. Therefore $R$ is a congruence. We may then consider the quotient algebra $Y/R$ with constant $0'=[0]$ and binary operations $p'([x],[y])=[p(x,y)]$ and $d'([x],[y])=[d(x,y)]$. The operations $p'$ and $d'$ are well defined since $R$ is a congruence (note that this quotient is a Słomiński algebra by the same argument as in the proof of Lemma~\ref{Słomiński3}). Moreover, the map $\pi\colon Y\rightarrow Y/R$, given by $\pi (y)=[y]$ is a morphism as in the proof of Lemma~\ref{Słomiński3}. Now suppose $y\in \Ker \pi$, then $yR0$. Hence there exists $x$, $u\in X$ such that $f(x)=y$, $f(u)=0$ and $g(x)=g(u)$. Then $y=fx\in f (g^{-1}gu)=f(u\vee \ker g) =f(u) \vee f\Ker g=fK$ by applying Lemma~\ref{ax2slo} and property $(G2)$ of the direct image map. Conversely, if $y\in fK$, then there exists $x\in K$ such that $y=fx$. Furthermore, since $x\in K$, we have $gx=0$. It follows that $yR0$ since $f0=0$ and $g0=0$. Therefore $\Ker \pi=fK$ and hence $fK$ is normal. \qedhere
\end{proof}

\begin{remark}
    Słomiński algebras form only a particular type of a semi-abelian variety of universal algebras. In general, a variety of universal algebras, seen as a category, is semi-abelian if and only if it is a pointed protomodular \cite{Bourn} variety and if and only if its algebraic theory contains a unique constant $0$, binary terms $d_1,\dots, d_n$, and an $(n+1)$-ary term $p$ satisfying the following identities (see \cite{BouJan03}):
    $$
        \left\{
        \begin{array}{l} d_1(x,x)=\dots=d_n(x,x)=0, \\ p(d_1(x,y),\dots,d_n(x,y),y)=x.
        \end{array}\right.
    $$
    Such varieties were first introduced and studied by A.~Ursini \cite{Urs83,Urs94}. As the reader will notice, Słomiński algebras are given by the case when $n=1$ and $p$ and $d=d_1$ are the only basic operations in the variety. All results contained in this section easily generalize to algebras in an arbitrary semi-abelian variety. In fact, semi-abelian varieties are precisely where they generalize: as it follows from \cite{DNA 0}, validity of the last part of Axiom \ref{ax2} would force a variety to be semi-abelian.
\end{remark}

\section{Embedding and Projection Basics}

Now we look at some significant consequences of the axioms from Section \ref{framework}.
Recall firstly the definitions of a monomorphism and an epimorphism from category theory, which are related to the notions of embeddings and projections in this context. A morphism $f\colon X\rightarrow Y$ in a category is a \emph{monomorphism} if $fg_1 =f g_2 \Rightarrow g_1 =g_2$ for any two parallel morphisms $g_1$ , $g_2\colon V\to X$. Dually, a morphism $g\colon X\rightarrow Y$ in a category is an \emph{epimorphism} if $f_1 g = f_2 g \Rightarrow f_1 =f_2$ for any $f_1$, $f_2\colon Y\to Z$.

\begin{lemma}\label{embeddingmono}
    Every embedding is a monomorphism, and dually, every projection is an epimorphism.
\end{lemma}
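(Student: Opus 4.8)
The plan is to prove the embedding half of the statement directly from the universal property defining an embedding, and then to obtain the projection half for free by duality, since projection is the dual notion of embedding and epimorphism is the dual notion of monomorphism.

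First I would fix an embedding $m\colon M\to G$ associated to a (conormal) subgroup $S$, and suppose that $mg_1=mg_2$ for a parallel pair of morphisms $g_1,g_2\colon V\to M$; the goal is to force $g_1=g_2$. Setting $m'\coloneqq mg_1=mg_2\colon V\to G$, the key observation is that $m'$ satisfies the hypothesis of the embedding's universal property, namely $\im m'\subseteq S$. Indeed, $\im m'=m'V=(mg_1)V=m(g_1V)$ by (F2), and since $g_1V\subseteq M$ (here $M$ denotes the top element of $\mathsf{Sub}M$), monotonicity (G1) gives $m(g_1V)\subseteq mM=\im m\subseteq S$. Having verified this image condition, I would invoke the \emph{uniqueness} clause of the universal property: there is a unique $u\colon V\to M$ with $m'=mu$. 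But both $g_1$ and $g_2$ serve as such a $u$, by the very definition of $m'$; uniqueness therefore forces $g_1=u=g_2$, so $m$ is a monomorphism.

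For the projection case I would not repeat the argument but simply dualize. As recorded in the definitions, the notion of a projection associated to $S$ is exactly the dual of that of an embedding associated to $S$, and epimorphism is the dual of monomorphism; hence \textquotedblleft every projection is an epimorphism\textquotedblright{} is precisely the dual statement of \textquotedblleft every embedding is a monomorphism\textquotedblright, and since the ambient axioms are self-dual it holds automatically. I do not anticipate any genuine obstacle here: the only step requiring a moment's care is the image computation $\im(mg_i)\subseteq S$, which is a routine application of (F2) and (G1), after which the conclusion is read straight off the uniqueness in the universal property.
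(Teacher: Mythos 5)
Your proof is correct and follows essentially the same route as the paper's: both verify that the common composite has image contained in $S$ and then invoke the uniqueness clause of the embedding's universal property to force $g_1=g_2$, with the projection half obtained by duality. The only difference is that you spell out the image computation via (F2) and (G1), which the paper simply declares to be clear.
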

\begin{proof}
    Suppose $m \colon M\rightarrow G$ is an embedding associated to a subgroup $S$ of a group $G$. We will show that $m$ is a monomorphism. Suppose $m f= m f'$, for parallel morphisms $f$, $f'$ with codomain $S$. It is then clear that $\mathsf{Im}(m f) \subseteq S$, since $\mathsf{Im}m\subseteq S$. Therefore by applying Axiom~\ref{ax3}, we obtain that the morphism $f$ must be the unique morphism such that it makes the diagram
    $$
        \xymatrix {M \ar[r] ^{m} & G\\ F \ar@{.>}[u]^f  \ar[ur]_{mf}}
    $$
    commute. The same is true for $f'$ and so $f=f'$. The statement that a projection is an epimorphism is dual to the statement that an embedding is a monomorphism and hence must also be true.
\end{proof}

\begin{lemma}\label{classificationembeddings}
    Given two embeddings, $m$ and $m'$, associated to the same subgroup $S$ of a group $G$, we have $m'=mi$ for a unique isomorphism $i$. Moreover, if $m\colon M\rightarrow G$ is an embedding associated to $S$, then so is $mi$, for any isomorphism $i$ with codomain $M$.
\end{lemma}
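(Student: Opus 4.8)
The plan is to read both the isomorphism and its inverse straight out of the defining universal property of an embedding, and then to recognise the two resulting composites as identity morphisms by invoking the \emph{uniqueness} half of that universal property. Write $m\colon M\to G$ and $m'\colon M'\to G$ for the two embeddings associated to $S$. Since $\im m'\subseteq S$, the universal property of $m$ produces a unique morphism $i\colon M'\to M$ with $m'=mi$; this $i$ is the candidate isomorphism, and its uniqueness is precisely the uniqueness asserted in the statement. Symmetrically, since $\im m\subseteq S$, the universal property of $m'$ produces a unique $j\colon M\to M'$ with $m=m'j$.

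To see that $j$ is a two-sided inverse of $i$, I would feed the composites $ij$ and $ji$ back into the universal properties. From $m(ij)=(mi)j=m'j=m$ together with $m=m\,\mathsf{id}_M$, and the fact that the universal property of $m$ allows only one self-map $u$ with $m=mu$, we conclude $ij=\mathsf{id}_M$. The mirror-image computation $m'(ji)=(m'j)i=mi=m'=m'\,\mathsf{id}_{M'}$ forces $ji=\mathsf{id}_{M'}$ in the same way. Hence $i$ is an isomorphism, which settles the first assertion.

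For the second assertion, let $i$ be any isomorphism with codomain $M$, say $i\colon N\to M$, and put $n:=mi$. The condition $\im n\subseteq S$ is immediate from monotonicity (G1), since $\im n=m(iN)\subseteq mM=\im m\subseteq S$. For the factorisation property, given any $m''\colon M''\to G$ with $\im m''\subseteq S$, the universal property of $m$ supplies a unique $v$ with $m''=mv$; then $u:=i^{-1}v$ satisfies $nu=mi\,i^{-1}v=mv=m''$, giving existence. Uniqueness of $u$ follows because $m$ is a monomorphism (Lemma~\ref{embeddingmono}) and $i$, being an isomorphism, is monic as well, so $nu_1=nu_2$ yields $iu_1=iu_2$ and hence $u_1=u_2$; thus $n$ is an embedding associated to $S$.

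The argument is essentially formal, so I expect no serious obstacle. The one point requiring genuine care is the correct deployment of the uniqueness clause in the second paragraph: it is only by testing the self-composites $ij$ and $ji$ against the identities $\mathsf{id}_M$ and $\mathsf{id}_{M'}$ that one upgrades ``mutually factoring'' to ``mutually inverse''. Everything else is bookkeeping of composition directions.
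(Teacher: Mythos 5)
Your proof is correct and follows essentially the same route as the paper's: both directions of the universal property yield the mutually factoring morphisms, their composites are identified with identities, and the second assertion is verified directly from the definition using the inverse of $i$. The only cosmetic difference is that where you invoke the uniqueness clause of the universal property (for $ij=\mathsf{id}_M$) the paper cancels via Lemma~\ref{embeddingmono}, and vice versa for the uniqueness step in the second part --- these are interchangeable, since that lemma is itself an immediate consequence of the uniqueness clause.
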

\begin{proof}
    It is clear that $\im m\subseteq S$ and $\im m'\subseteq S$ since $m$ and $m'$ are embeddings. Hence we then obtain $m=m'u$ and $m'=mv$ (by the definition of an embedding) for unique morphisms $u$ and $v$. Therefore $m=m\mathsf{id}_M =m'u=mvu$ and hence $vu=\mathsf{id}_M$ since $m$ is a monomorphism (by Lemma \ref{embeddingmono}). We obtain $uv=\mathsf{id}_{M'}$ by a similar argument. Therefore $u$ and $v$ are isomorphisms. The uniqueness of the isomorphism $u$ in the expression $m'=mu$ follows from the fact that $m'$ is a monomorphism. Now suppose $m\colon M\rightarrow G$ is an embedding associated to $S$ and $i\colon N\rightarrow M$ is an isomorphism. We will show that $mi$ is an embedding associated to $S$. Firstly, since $\im m\subseteq S$, we have $\im (mi)\subseteq S$. Now suppose $f\colon U\rightarrow G$ is a morphism such that $\im f\subseteq S$. Since $m$ is an embedding, we obtain $f=mu$ for a unique morphism $u\colon U\rightarrow M$. Now since $i$ is an isomorphism, there exists a morphism $i'$ such that $ii'=\mathsf{id}_M$ and $i'i=\mathsf{id}_N$. Hence $f=mu=m\mathsf{id}_M u=mii'u=mi(i'u)$. It remains to show that $i'u$ is the unique morphism such that $f=mi(i'u)$. If $f=mij$ for a morphism $j$ with codomain $N$, then $ij=u$ since the morphism $u$ is unique. Hence $j=\mathsf{id}_N j=i'ij=i'u$ and therefore the morphism $i'u$ is unique.
\end{proof}

\begin{lemma}\label{classificationproj}
    Given two projections $e$ and $e'$ associated to the same subgroup $S$, we have $e=ie'$ for a unique isomorphism $i$. Moreover, if $e:G\rightarrow H$ is an embedding associated to a subgroup $S$, then so is the composite $ie$ for any isomorphism $i$ with domain $H$.
\end{lemma}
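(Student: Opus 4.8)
The plan is to lean on the self-duality of the framework, which is the whole point of the paper: this statement is \emph{precisely} the dual of Lemma~\ref{classificationembeddings}. Reading off Table~\ref{FigA}, dualization interchanges embeddings with projections, images with kernels, the inclusion $\subseteq$ with its reverse, and a composite $gf$ with $fg$. Applying this dictionary to the clause ``$m'=mi$ for a unique isomorphism $i$ with codomain $M$'' produces exactly ``$e=ie'$ for a unique isomorphism $i$'', and likewise the second clause dualizes into the second clause here (noting that the word \emph{embedding} in the statement should read \emph{projection}). Since every axiom we use, namely (P1--P3), (I), (G), (A), (F1), (F2), (BL), and Axioms~\ref{ax3}--\ref{ax5}, is self-dual, and since Lemma~\ref{embeddingmono} already records the dual fact that projections are epimorphisms, each line of the proof of Lemma~\ref{classificationembeddings} dualizes verbatim. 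So the cleanest route is simply to invoke duality, exactly as in the last sentence of the proof of Lemma~\ref{embeddingmono}.

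For readers who want the argument unwound, I would mirror the proof of Lemma~\ref{classificationembeddings} directly. First I would apply the universal property of $e'$ (valid because $S\subseteq\Ker e$) to get a unique $i$ with $e=ie'$, and symmetrically the universal property of $e$ (valid because $S\subseteq\Ker e'$) to get a unique $j$ with $e'=je$. Substituting gives $e=(ij)e=\mathsf{id}e$; since $e$ is an epimorphism by Lemma~\ref{embeddingmono}, this forces $ij=\mathsf{id}$, and symmetrically $ji=\mathsf{id}$, so $i$ is an isomorphism with $e=ie'$. Uniqueness of $i$ follows because $e'$ is an epimorphism: from $ie'=i'e'$ we deduce $i=i'$.

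For the second clause, given a projection $e\colon G\to H$ associated to $S$ and an isomorphism $i$ with domain $H$, I would first check $S\subseteq\Ker(ie)$. Using (F2) we have $\Ker(ie)=(ie)^{-1}1=e^{-1}(i^{-1}1)$, and since $i$ is an isomorphism, property (GF) gives $i^{-1}1=1$, so $\Ker(ie)=e^{-1}1=\Ker e\supseteq S$. Then for any $e'$ with $S\subseteq\Ker e'$, the universal property of $e$ yields a unique $v$ with $e'=ve$; writing $e'=v(i^{-1}i)e=(vi^{-1})(ie)$ exhibits $vi^{-1}$ as the required factorization through $ie$, and its uniqueness reduces to that of $v$.

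I do not expect a genuine obstacle: the content is entirely a dualization of the already-established Lemma~\ref{classificationembeddings}. The only points needing care are bookkeeping ones, namely keeping the \emph{direction} of the isomorphism consistent with the rule $gf\mapsto fg$ under dualization, and invoking (GF) to see that an isomorphism preserves both the kernel and the universal factorization.
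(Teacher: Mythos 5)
Your proposal is correct and takes exactly the paper's route: the paper's entire proof is the one-line observation that this lemma is dual to Lemma~\ref{classificationembeddings} and therefore holds by the self-duality of the axiomatic framework, which is precisely your primary argument. Your explicit unwinding of the dual proof (and your note that ``embedding'' in the statement should read ``projection'') is accurate but goes beyond what the paper itself records.
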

Note Lemma \ref{classificationproj} is dual to Lemma \ref{classificationembeddings} and hence also holds. Some useful consequences of Lemmas \ref{classificationembeddings} and \ref{classificationproj}  are:
\begin{itemize}
    \item For a conormal subgroup $S$ of a group $G$, the embeddings associated to $S$ are the composites $\iota_{S}i$ where $i$ is any isomorphism with codomain $S/1$.
    \item For a group $G$, $\mathsf{id}_G$ is an embedding associated to $G$ and a projection associated to $1$. Therefore we must have $\iota_G =\mathsf{id}_G i$ for a unique isomorphism $i$. Hence $\iota_G=\mathsf{id}_G i=i$ and therefore $\iota_G$ is an isomorphism. Dually $\pi_1$ is also an isomorphism.
\end{itemize}

\begin{lemma}\label{imembeddding}
    Any embedding is an embedding associated to its image. Moreover, the image of an embedding associated to a conormal subgroup $S$ is the subgroup $S$.
\end{lemma}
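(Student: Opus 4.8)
The plan is to verify the two required universal properties directly from the definition of an embedding, using only the poset axioms (P1)--(P3) together with the functoriality (F2) and monotonicity (G1) of direct images.

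For the first claim, suppose $m\colon M\to G$ is an embedding associated to a subgroup $S$, so that $\im m\subseteq S$ and every morphism with image inside $S$ factors uniquely through $m$. To see that $m$ is also an embedding associated to $\im m$, note first that $\im m\subseteq \im m$ by reflexivity (P1), which is the required containment. Next, given any $m'\colon M'\to G$ with $\im m'\subseteq \im m$, transitivity (P2) combined with $\im m\subseteq S$ yields $\im m'\subseteq S$; the universal property of $m$ relative to $S$ then produces a unique $u\colon M'\to M$ with $m'=mu$. This is exactly the factorisation needed for $m$ to be an embedding associated to $\im m$, so the first claim follows. (Uniqueness is already guaranteed by the $S$-universal property, since the factorisation condition $m'=mu$ is identical; it would in any case follow from Lemma~\ref{embeddingmono}.)

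For the second claim, let $m\colon M\to G$ be an embedding associated to a \emph{conormal} subgroup $S$. By definition $\im m\subseteq S$, so it remains to prove the reverse inclusion $S\subseteq \im m$. Here conormality is the key: write $S=\im f$ for some morphism $f\colon H\to G$. Then $\im f=S\subseteq S$, so the universal property of $m$ supplies a morphism $u\colon H\to M$ with $f=mu$. Computing the image, $\im f=(mu)H=m(uH)$ by (F2); since $uH$ is a subgroup of $M$, hence contained in the top element $M$ of $\mathsf{Sub}\,M$, monotonicity (G1) gives $m(uH)\subseteq mM=\im m$. Therefore $S=\im f\subseteq \im m$, and antisymmetry (P3) yields $\im m=S$.

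The argument is essentially routine bookkeeping with the axioms; the only genuinely load-bearing observation is in the second part, where conormality of $S$ is precisely what allows one to run the universal property ``backwards'', factoring a morphism of image $S$ through $m$ and thereby forcing all of $S$ into $\im m$. Without conormality there is no reason for $\im m$ to exhaust $S$, which is exactly why the second statement is restricted to conormal subgroups.
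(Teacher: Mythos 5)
Your proof is correct and follows essentially the same route as the paper's: the first part reduces the $\im m$-universal property to the $S$-universal property via transitivity of inclusion, and the second part uses conormality to produce a morphism $f$ with $\im f = S$, factors it as $f = mu$, and computes $S = m(uH) \subseteq \im m$ to conclude by antisymmetry. The only cosmetic difference is that you argue with an arbitrary embedding $m$ associated to $S$ where the paper works with the canonical $\iota_S$; the underlying argument is identical.
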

\begin{proof}
    Suppose $f$ is an embedding associated to a subgroup $A$. We will show that $f$ is also an embedding associated to $\im f$. Suppose $g$ is a morphism such that $\im g \subseteq \im f$. Then we have $g=g' f$ (for a unique morphism $g'$) since $\im g \subseteq \im f \subseteq A$ and $f$ is an embedding associated to $A$.

    For the last part suppose $S$ is a conormal subgroup of a group $G$. Then there must exist a morphism $f\colon H\rightarrow G$ such that $\im f=S$. By the universal property of $\iota_{S}$, we have $f=\iota_{S}g$ for a unique morphism $g\colon H\rightarrow S/1$. Hence $S=fH=\iota_{S}gH$ and therefore $S\subseteq \im \iota_{S}$. Moreover, by the definition of the embedding $\iota_{S}$, we have $\im \iota_{S}\subseteq S$. Therefore $\im \iota_{S}=S$.
\end{proof}

\begin{lemma}
    Any projection is a projection associated to its kernel. Moreover, the kernel of a projection associated to a normal subgroup $S$ of $G$ is the subgroup $S$.
\end{lemma}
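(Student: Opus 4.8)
The plan is to recognize that this lemma is precisely the formal dual of Lemma~\ref{imembeddding}. Under the duality of the noetherian form (Table~\ref{FigA}), an embedding dualizes to a projection, the image of a morphism dualizes to its kernel, and a conormal subgroup dualizes to a normal subgroup (with $\iota_S$ dualizing to $\pi_S$). Reading Lemma~\ref{imembeddding} through this dictionary turns the statement ``any embedding is an embedding associated to its image, and the image of $\iota_S$ is $S$'' into exactly the present statement. Since every ingredient of the earlier proof is self-dual or has its dual already in hand, I would first simply invoke the duality principle: the result holds because its dual has been proved, exactly as was done for Lemma~\ref{classificationproj}.

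To make the argument self-contained, I would then spell out the dualized proof. For the first part, suppose $e$ is a projection associated to a subgroup $B$; to show $e$ is a projection associated to $\Ker e$, I would take any morphism $g$ with $\Ker e\subseteq \Ker g$. Since $e$ is a projection associated to $B$, we have $B\subseteq \Ker e\subseteq \Ker g$, so the defining property of $e$ yields a unique $g'$ with $g=g'e$, while $e$ being an epimorphism (the dual half of Lemma~\ref{embeddingmono}) secures the uniqueness. This is verbatim the dual of the factorization step used for embeddings.

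For the second part, suppose $S$ is normal, so $S=\Ker f$ for some $f\colon G\to Y$. Since $S\subseteq \Ker f$, the projection $\pi_S$ factors $f$ as $f=v\pi_S$ for a unique $v$. I would then compute, using (F2), that $S=\Ker f=(v\pi_S)^{-1}1=\pi_S^{-1}\Ker v$, and since $1\subseteq \Ker v$, monotonicity of inverse images (G1) gives $\Ker\pi_S=\pi_S^{-1}1\subseteq \pi_S^{-1}\Ker v=S$. Combined with $S\subseteq \Ker\pi_S$, which is built into the definition of $\pi_S$, this yields $\Ker\pi_S=S$.

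The argument has essentially no obstacle, as there is no genuinely new content beyond Lemma~\ref{imembeddding}. The only point requiring care is the bookkeeping of the duality: I would confirm that each tool invoked in the embedding proof --- namely (F2), the monotonicity (G1), the universal property of $\pi_S$ from Axiom~\ref{ax3}, and the epimorphism property of projections --- is self-dual or has its dual counterpart already established, so that the dualized proof is legitimate.
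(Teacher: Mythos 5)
Your proof is correct and takes exactly the paper's approach: the paper's entire proof is the one-line observation that the statement is dual to Lemma~\ref{imembeddding}, which is precisely your opening paragraph. The explicit dualized argument you add is sound and matches, step for step, the dual of the paper's proof of Lemma~\ref{imembeddding}; the paper simply leaves that spelling-out implicit.
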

\begin{proof}
    The result is dual to Lemma \ref{imembeddding} and hence must also be true.
\end{proof}

\begin{lemma}\label{kerembedding}\label{improjection}
    A morphism $f$ is an embedding if and only if it has trivial kernel, i.e., $\mathsf{Ker}f=1$. Dually, a morphism is a projection if and only if its image is the largest subgroup of the codomain.
\end{lemma}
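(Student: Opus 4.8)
The plan is to prove the first statement by two implications and then obtain the second by pure duality. The workhorse throughout will be the canonical factorization of Axiom~\ref{ax4}, namely $f=\iota_{\im f}\,h\,\pi_{\Ker f}$ with $h$ an isomorphism, combined with the classification of embeddings in Lemma~\ref{classificationembeddings}. Note first that $\im f$ is automatically conormal (being an image), so $\iota_{\im f}$ is available by Axiom~\ref{ax3}, and that $\Ker f=f^{-1}1$ is automatically normal, so $\pi_{\Ker f}$ is available as well.

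For the forward direction, suppose $f$ is an embedding; I want $\Ker f=1$. I would set $m:=\iota_{\im f}h$ and observe, via the second part of Lemma~\ref{classificationembeddings}, that $m$ is again an embedding associated to $\im f$ (it is $\iota_{\im f}$ precomposed with an isomorphism whose codomain is the domain of $\iota_{\im f}$). On the other hand, Lemma~\ref{imembeddding} tells me that $f$ itself is an embedding associated to $\im f$. Since two embeddings associated to the same subgroup differ by a unique isomorphism (first part of Lemma~\ref{classificationembeddings}), I get $f=mi$ for some isomorphism $i$. Comparing with the factorization $f=m\pi_{\Ker f}$ and cancelling the monomorphism $m$ (Lemma~\ref{embeddingmono}) forces $\pi_{\Ker f}=i$, so $\pi_{\Ker f}$ is an isomorphism. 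As $\pi_{\Ker f}$ is a projection associated to $\Ker f$, its kernel equals $\Ker f$ by the dual of Lemma~\ref{imembeddding}; but the kernel of an isomorphism is $1$ (immediate from (GF) and (G4)), and therefore $\Ker f=1$.

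For the converse, suppose $\Ker f=1$. Then in the factorization $f=\iota_{\im f}h\pi_{\Ker f}=\iota_{\im f}h\pi_1$ the projection $\pi_1$ is an isomorphism, as recorded just after Lemma~\ref{classificationproj}. Hence $h\pi_1$ is an isomorphism whose codomain is the domain of $\iota_{\im f}$, and the second part of Lemma~\ref{classificationembeddings} shows that $f=\iota_{\im f}(h\pi_1)$ is an embedding associated to $\im f$; in particular $f$ is an embedding. Finally, the second assertion---that a morphism is a projection precisely when its image is the largest subgroup of the codomain---is the formal dual of the first under the involution exchanging embeddings with projections, kernels with images, and the smallest subgroup with the largest subgroup, so it follows with no further work from the self-duality of the axioms.

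I expect the forward direction to be the only genuinely delicate point. The tempting route of computing $\Ker f$ directly from $f=\iota_{\im f}i$ is circular, since it merely reduces the claim to $\Ker\iota_{\im f}=1$; the decisive move is instead to use the \emph{uniqueness} clause of Lemma~\ref{classificationembeddings} to identify $\pi_{\Ker f}$ with an isomorphism, after which triviality of the kernel drops out of the dual of Lemma~\ref{imembeddding}. Everything else is routine bookkeeping with the factorization and with the Galois-connection identities (GF) and (G4).
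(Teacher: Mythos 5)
Your proof is correct and takes essentially the same route as the paper's: both directions rest on the Axiom~\ref{ax4} factorization $f=\iota_{\im f}h\pi_{\Ker f}$ combined with Lemmas~\ref{embeddingmono}, \ref{classificationembeddings} and~\ref{imembeddding}, with the projection statement obtained by duality. The only difference is bookkeeping: in the forward direction the paper cancels the monomorphism $f$ itself to obtain $i'h\pi_{\Ker f}=\mathsf{id}_A$ and then chases $\Ker f$ through that composite, whereas you cancel the embedding $\iota_{\im f}h$ to identify $\pi_{\Ker f}$ with an isomorphism --- the same ingredients, organized slightly differently.
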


\begin{proof}
    Suppose a morphism $f\colon A\rightarrow B$ is an embedding associated to a conormal subgroup $S$. Then we must have $f=\iota_S i$ for an isomorphism $i\colon A\rightarrow S/1$. Accordingly, there must exist a morphism $i'\colon S/1\rightarrow A$ such that $ii'=\mathsf{id}_{S/1}$. Now by Lemma \ref{imembeddding}, $\im \iota_{S}=S$. Moreover, by Axiom~\ref{ax4}, $f$ factorizes as $f =\iota_{\im f} h\pi_{\Ker f}=\iota_{S} h\pi_{\Ker f}$ (since $\im f=S$ by Lemma \ref{imembeddding}) where $h\colon S/\Ker f \rightarrow S $ is an isomorphism. Now by Lemma \ref{embeddingmono}, $f$~is a monomorphism and noting that $f\mathsf{id}_{A} =f=\iota_{S} h\pi_{\Ker f}=\iota_{S}\mathsf{id}_{S/1}h\pi_{\Ker f}=\iota_{S}ii' h\pi_{\Ker f}=fi' h\pi_{\Ker f}$, we have $i' h\pi_{\Ker f}=\mathsf{id}_A$. Hence $\Ker f=\mathsf{id}_A \Ker f =i' h\pi_{\Ker f} \Ker f =i'h 1=1$ and therefore $f$ has trivial kernel.

    Conversely, suppose $\mathsf{Ker }f$ is trivial, while $f$ factorizes as $f=\iota_{\mathsf{Im }f}h\pi_{\mathsf{Ker }f}=\iota_{\im f}h\pi_1$, where $h$ is an isomorphism. Then  if $\mathsf{id}_G=\iota_G \bar{h}\pi_1$, we have that $\bar{h}\pi_1$ is an isomorphism by applying Lemma \ref{classificationembeddings} (since both $\mathsf{id}_G$ and $\iota_G$ are embeddings associated to $G$). Hence $\pi_1$ is an isomorphism. Therefore by Lemma \ref{classificationembeddings}, $f$ is an embedding associated to $\im f$.
\end{proof}

\begin{lemma}\label{classificationiso}
    A morphism is both an embedding and a projection if and only if it is an isomorphism.
\end{lemma}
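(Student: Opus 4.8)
The statement is a biconditional, and my plan is to route both implications through the kernel/image characterization of embeddings and projections (Lemma~\ref{kerembedding}) together with the canonical factorization supplied by Axiom~\ref{ax4}. The guiding idea is that ``being an embedding'' means exactly ``having trivial kernel'' and ``being a projection'' means exactly ``having full image'', so the combined hypothesis collapses the factorization of $f$ into a composite of isomorphisms. As usual in this framework, I expect to do the real work on one side and let self-duality carry the rest.

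For the forward direction I would assume that $f\colon X\to Y$ is at once an embedding and a projection. By Lemma~\ref{kerembedding}, the embedding hypothesis yields $\mathsf{Ker}f=1$ and the (dual part of the) projection hypothesis yields $\mathsf{Im}f=Y$, the largest subgroup of the codomain. Substituting into the factorization of Axiom~\ref{ax4}, the expression $f=\iota_{\mathsf{Im}f}\,h\,\pi_{\mathsf{Ker}f}$ becomes $f=\iota_{Y}\,h\,\pi_{1}$. Here $h$ is an isomorphism by the axiom itself, while $\iota_{Y}$ and $\pi_{1}$ are isomorphisms by the observations recorded immediately after Lemma~\ref{classificationproj} ($\iota_{G}$ is an isomorphism for every $G$, and dually so is $\pi_{1}$). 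It then remains only to observe that a composite of isomorphisms is an isomorphism: if $g$ and $g'$ are two-sided inverses of two composable isomorphisms, the reversed composite of the inverses is a two-sided inverse of the composite. Hence $f$ is an isomorphism.

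For the reverse direction I would take $f\colon X\to Y$ to be an isomorphism with inverse $g\colon Y\to X$ and verify \emph{directly} that $f$ is an embedding associated to $\mathsf{Im}f$, so as to avoid any circularity. The containment $\mathsf{Im}f\subseteq\mathsf{Im}f$ is automatic; given any $m'\colon M'\to Y$ with $\mathsf{Im}m'\subseteq\mathsf{Im}f$, the morphism $u=gm'$ satisfies $fu=fgm'=m'$, and any $u'$ with $fu'=m'$ is forced to equal $u$ since $u'=gfu'=gm'=u$. Thus $f$ satisfies the universal property of an embedding. Because the notion of isomorphism is self-dual while embeddings and projections are mutually dual notions, reading the same argument in the dual framework shows that $f$ is also a projection. (Alternatively, one may compute $\mathsf{Ker}f=f^{-1}1=g1=1$ using (GF) and (G4), and dually $\mathsf{Im}f=Y$, and then invoke Lemma~\ref{kerembedding}.)

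I do not anticipate a genuine obstacle here, since the substantive content is already carried by Lemma~\ref{kerembedding} and Axiom~\ref{ax4}. The only points demanding a little care are the purely formal check that a composite of isomorphisms is again an isomorphism, and bookkeeping of which factor plays which role when the self-duality is applied in the reverse direction.
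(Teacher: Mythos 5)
Your proof is correct. The forward direction coincides with the paper's: both extract $\Ker f=1$ and $\im f=Y$ from Lemma~\ref{kerembedding} and its dual part (Lemma~\ref{improjection}), substitute into the Axiom~\ref{ax4} factorization $f=\iota_{\im f}h\pi_{\Ker f}$, and observe that $\iota_Y$, $h$ and $\pi_1$ are all isomorphisms, so $f$ is a composite of isomorphisms. Where you genuinely diverge is the converse. The paper stays inside the subgroup-chasing calculus: from a one-sided inverse $f'$ it computes $1=f'1=f'(ff^{-1}1)=\mathsf{id}_G f^{-1}1=f^{-1}1$, concludes that $\Ker f$ is trivial, dualizes to get that $\im f$ is the largest subgroup of the codomain, and only then invokes Lemma~\ref{kerembedding} and Lemma~\ref{improjection} to conclude that $f$ is an embedding and a projection. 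You instead verify the universal property of an embedding directly: $u=gm'$ factors any $m'$ with $\im m'\subseteq\im f$ through $f$, and uniqueness follows since $g$ is a two-sided inverse; duality then supplies the projection half. Your route is more elementary, using only the categorical axioms (A) and (I) and bypassing the Galois-connection machinery and Lemma~\ref{kerembedding} entirely, and it yields the slightly sharper fact that an isomorphism is an embedding associated to its own image. The paper's route has the advantage of reusing the already-established kernel/image characterization and keeping the argument in the same equational style as the surrounding lemmas; your parenthetical alternative ($\Ker f=f^{-1}1=g1=1$ via (GF) and (G4), then Lemma~\ref{kerembedding}) is essentially the paper's computation in compressed form, so you have in effect recorded both proofs.
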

\begin{proof}
    Given a morphism $f\colon G\rightarrow H$, by Axiom~\ref{ax4}, we may factorize $f$ as $f=\iota_{\im f}h\pi_{\Ker f}$ where $h$ is an isomorphism.
    If $f$ is both an embedding and a projection, then by Lemma \ref{kerembedding} and Lemma \ref{improjection}, $\Ker f=1$ and $\im f=H$. Now by a consequence of Lemma \ref{classificationembeddings} and Lemma \ref{classificationproj}, we obtain that $\iota_H$ and $\pi_1$ are isomorphisms and hence $f=\iota_H h\pi_1$ is an isomorphism. Conversely, suppose $f$ is an isomorphism. Then there must exist a morphism $f'\colon H\rightarrow G$ such that $f'f=\mathsf{id}_G$. Therefore $1=f'1=f'(ff^{-1}1)=\mathsf{id}_Gf^{-1}1=f^{-1}1$ and hence $f$ has trivial kernel. Now by applying duality, we obtain that the image of $f$ is the largest subgroup of $H$. Therefore, by Lemma \ref{kerembedding} and Lemma \ref{improjection}, we obtain that $f$ is both an embedding and a projection.
\end{proof}

\begin{lemma}\label{inj}
    A morphism is an embedding if and only if the corresponding direct image map is injective (equivalently, the inverse image map is surjective). Dually, a morphism is a projection if and only if the corresponding inverse image map is injective (equivalently, the direct image map is surjective).
\end{lemma}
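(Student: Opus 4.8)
The plan is to reduce the whole statement to the kernel criterion of Lemma~\ref{kerembedding} by way of Axiom~\ref{ax2}, treating the parenthetical equivalence (direct image injective versus inverse image surjective) as a purely formal consequence of the Galois connection~(G). Since the direct/inverse image maps and the embedding/projection notions both swap under duality, and injectivity trades with surjectivity only through an intermediate characterization, I would prove just the embedding half and obtain the projection half by applying duality.

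First I would establish the formal fact that, for the Galois connection attached to $f\colon X\to Y$, the left adjoint $f(-)$ is injective if and only if $f^{-1}fA=A$ for every $A\in\mathsf{Sub}X$, if and only if $f^{-1}(-)$ is surjective. For the first equivalence: if $f(-)$ is injective, then (G2) gives $f(f^{-1}fA)=ff^{-1}fA=fA=f(A)$, so injectivity forces $f^{-1}fA=A$; conversely, if $f^{-1}fA=A$ always, then $fA=fA'$ yields $A=f^{-1}fA=f^{-1}fA'=A'$. For the second equivalence: the identity $f^{-1}fA=A$ for all $A$ exhibits every subgroup as a value of $f^{-1}(-)$, hence surjectivity; and conversely, if $A=f^{-1}C$ then the second half of (G2) gives $f^{-1}fA=f^{-1}ff^{-1}C=f^{-1}C=A$. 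This settles the parentheticals.

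Next I would connect injectivity of $f(-)$ to the kernel. By Axiom~\ref{ax2} we have $f^{-1}fA=A\vee\mathsf{Ker}f$, so the condition ``$f^{-1}fA=A$ for all $A$'' is equivalent to ``$\mathsf{Ker}f\subseteq A$ for all $A\in\mathsf{Sub}X$''. Specializing to $A=1$ forces $\mathsf{Ker}f=1$, while conversely $\mathsf{Ker}f=1$ gives $A\vee\mathsf{Ker}f=A\vee 1=A$. Combined with the previous paragraph, this shows $f(-)$ is injective if and only if $\mathsf{Ker}f=1$, which by Lemma~\ref{kerembedding} is precisely the condition that $f$ be an embedding. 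The dual statement—$f$ is a projection if and only if $f^{-1}(-)$ is injective, equivalently $f(-)$ is surjective—then follows by duality, the kernel criterion of Lemma~\ref{kerembedding} dualizing to the image criterion recorded in Lemma~\ref{improjection}.

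I do not expect a genuine obstacle; the single point requiring care is keeping the two formal equivalences of the first step matched to the correct adjoint, since the naive dual of ``injective'' is not ``injective'' but must be routed through the characterization $f^{-1}fA=A$, so that injectivity of the left adjoint corresponds to surjectivity of the right adjoint and \emph{not} to injectivity of the right adjoint.
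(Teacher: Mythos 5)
Your proof is correct and follows essentially the same route as the paper's: both reduce the embedding half to the trivial-kernel criterion of Lemma~\ref{kerembedding} via the identity $f^{-1}fA = A \vee \mathsf{Ker}f$ from Axiom~\ref{ax2}, dispose of the parenthetical equivalences through the standard Galois-connection facts following from (G2), and obtain the projection half by duality. The only difference is one of presentation: you spell out the adjoint injectivity/surjectivity equivalences that the paper merely cites as well known, and you route the kernel computation through the characterization $f^{-1}fA = A$ for all $A$ rather than chasing $f(U) = f(V)$ directly, which makes your write-up slightly more self-contained but not a different argument.
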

\begin{proof}
    Consider a morphism $f\colon X\to Y$. Suppose the direct image map $f(-)\colon \mathsf{Sub}X\rightarrow \mathsf{Sub}Y$ is injective. Then $f$ must have trivial kernel. It is then an embedding by Lemma~\ref{kerembedding}. Conversely, suppose $f$ is an embedding. By Lemma~\ref{kerembedding}, we know that $f$ has trivial kernel. Suppose $f(U)=f(V)$ for two subgroups $U$ and $V$ of $X$. Then by Axiom~\ref{ax2} we have $U\vee \Ker f =f^{-1} f(U)=f^{-1}f(V)= V\vee \Ker f$. Since $f$ has trivial kernel, it follows that $U=V$ and the direct image map corresponding to $f$ is injective. The rest follows from the fact that in any Galois connection, the left adjoint is injective/surjective if and only if the right adjoint is surjective/injective.
\end{proof}

In the future, when we speak of a morphism $f$ being injective/surjective, we mean that the corresponding direct image map is such. By the lemma above, injective morphisms are the same as embeddings and surjective morphisms are the same as projections. A useful consequence of this (and in fact, already of Lemma~\ref{kerembedding}) is that if a composite $mm'$ is an embedding, then so is $m'$; dually, if a composite $ee'$ is a projection, then so is $e$.

\begin{lemma}\label{unique decomposition}
    Consider a morphism $f\colon X \rightarrow Y$ such that $f=me$, where $m\colon Z\rightarrow Y$ is an embedding and $e\colon X\rightarrow Z $ is a projection. If $f=m'e'$, where $m'\colon Z'\rightarrow Y$ is an embedding and $e'\colon X\rightarrow Z' $ is a projection, then there is an isomorphism $i\colon Z\to Z'$ making the following diagram commute:
    $$\xymatrix{&Z\ar[dd]^i \ar[dr]^m&\\
            X\ar[dr]_{e'} \ar[ur]^e &&Y\\
            &Z'.\ar[ur]_{m'}&}$$
\end{lemma}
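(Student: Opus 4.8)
The plan is to reduce the whole statement to the classification of embeddings associated to a fixed subgroup (Lemma~\ref{classificationembeddings}) together with the fact that embeddings are monomorphisms (Lemma~\ref{embeddingmono}). The crucial observation driving the argument is that $m$ and $m'$ are in fact embeddings associated to one and the same subgroup of $Y$, namely $\im f$.

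First I would pin down this common image. Since $e$ is a projection, Lemma~\ref{improjection} gives that $\im e$ is the largest subgroup of $Z$, that is $eX=Z$; applying (F2) I then compute $\im f = fX = (me)X = m(eX) = mZ = \im m$. The same computation with $e'$ and $m'$ gives $\im f = \im m'$. Hence $\im m = \im f = \im m'$; write $S$ for this subgroup of $Y$. By Lemma~\ref{imembeddding} every embedding is an embedding associated to its image, so both $m$ and $m'$ are embeddings associated to $S$.

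Next I would invoke Lemma~\ref{classificationembeddings} for the two embeddings $m'$ and $m$ associated to $S$ (taking $m'$ as the first embedding and $m$ as the second): this yields a unique isomorphism $i\colon Z\to Z'$ with $m = m'i$, which is exactly the commutativity of the upper triangle. For the lower triangle I would then use left-cancellation: from $f = me = m'e'$ and $m = m'i$ I obtain $m'(ie) = m'e'$, and since $m'$ is an embedding, hence a monomorphism by Lemma~\ref{embeddingmono}, cancelling $m'$ gives $ie = e'$. Thus $i$ makes both triangles commute, as required.

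The main obstacle is really the first step, namely correctly identifying that $m$ and $m'$ share the image $\im f$: once this is established, the conclusion is a direct appeal to the classification lemma and a single cancellation. Even so, this step is routine, relying only on (F2) and the characterization of projections in Lemma~\ref{improjection}.
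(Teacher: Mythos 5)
Your proof is correct, and it differs from the paper's in a way worth noting. The opening step is identical: both you and the paper compute $\im m = mZ = meX = \im f = m'e'X = m'Z' = \im m'$, using surjectivity of the projections and (F2). The divergence is in how the invertibility of $i$ is obtained. The paper constructs $i$ from the universal property of the embedding $m'$ (giving $m = m'i$, whence $i$ is an embedding by cancellation), then cancels the monomorphism $m'$ to get $e' = ie$, then uses that $ie = e'$ is a projection to conclude that $i$ is a projection, and finally invokes Lemma~\ref{classificationiso} (embedding + projection $=$ isomorphism). You instead pass through Lemma~\ref{imembeddding} to regard $m$ and $m'$ as embeddings associated to the \emph{same} subgroup $S = \im f$, so that Lemma~\ref{classificationembeddings} hands you the isomorphism $i$ with $m = m'i$ in a single step; the lower triangle then follows from one cancellation of the monomorphism $m'$ (Lemma~\ref{embeddingmono}). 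Your route is slightly shorter and isolates a point the paper's argument obscures: the isomorphism is determined entirely by the embedding halves of the two factorizations, with the projections $e$, $e'$ entering only into the image computation and the commutativity of the lower triangle, never into proving $i$ invertible. What the paper's route buys in exchange is that it exhibits $i$ explicitly as both an embedding and a projection, i.e., it verifies the self-dual characterization of isomorphisms directly, without leaning on the uniqueness-of-embeddings classification.
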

\begin{proof}
    Firstly,
    $$\im m =mZ=meX=\im f=m'e'X=m'Z'$$
    and hence there exists a morphism $i\colon Z\rightarrow Z'$ such that $m=m'i$. Moreover, since $m$ and $m'$ are embeddings, we obtain that $i$ is an embedding. Furthermore, we have $m'ie=me=m'e'$ and therefore, since $m'$ is a monomorphism, we obtain $e'=ie$. Lastly, since $e$ and $e'$ are projections, we have that $i$ is a projection. Therefore $i$ is an isomorphism and moreover, this isomorphism makes the diagram commute. \qedhere
\end{proof}

As noted in \cite{DNA IV}, the property established in the following lemma is in fact equivalent to Axiom~\ref{ax5} (under the rest of the axioms). The proof of this lemma (as well as the subsequent lemma) is identical to the one given in \cite{DNA IV}.

\begin{lemma}\label{direct image normal}
    Normal subgroups are stable under direct images along projections and conormal subgroups are stable under inverse images along embeddings.
\end{lemma}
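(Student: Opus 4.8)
The plan is to lean on the self-duality of the framework. The two assertions are exact duals of one another (direct image, projection, and normal dualize respectively to inverse image, embedding, and conormal, as recorded in Table~\ref{FigA}), so by the double-dual principle it suffices to prove just the first: that $fK$ is normal in $Y$ whenever $f\colon X\to Y$ is a projection and $K$ is a normal subgroup of $X$.

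I would begin by unpacking the hypotheses. Since $K$ is normal, $K=\Ker g$ for some morphism $g$. Since $f$ is a projection, Lemma~\ref{improjection} gives $\im f=Y$, and $f$ is moreover a projection associated to its own kernel $\Ker f$, so it carries the universal factorization property of projections. The one subtle idea is that $K$ need not sit above $\Ker f$, so there is no reason to expect $K$ itself to factor through $f$; the remedy is to enlarge $K$ first. By Axiom~\ref{ax5}, the join $K\vee\Ker f$ of the two normal subgroups $K$ and $\Ker f$ is again normal, say $K\vee\Ker f=\Ker g'$ for some $g'\colon X\to H'$.

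Now $\Ker f\subseteq K\vee\Ker f=\Ker g'$, and since $f$ is a projection associated to $\Ker f$, the universal property supplies a unique morphism $v\colon Y\to H'$ with $g'=vf$. I claim $\Ker v=fK$, which exhibits $fK$ as a kernel and hence proves it normal. To check the claim I would compute, using $(F2)$, that $f^{-1}\Ker v=(vf)^{-1}1=(g')^{-1}1=\Ker g'=K\vee\Ker f$, and then apply the direct image map $f$ to both ends. On one side, since the direct image preserves joins $(G3)$ and $f\Ker f=ff^{-1}1=1$ (because $ff^{-1}1\subseteq1$), we obtain $f(K\vee\Ker f)=fK\vee f\Ker f=fK$. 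On the other side, Axiom~\ref{ax2} gives $ff^{-1}\Ker v=\Ker v\wedge\im f=\Ker v\wedge Y=\Ker v$, where the last equality is precisely where the projection hypothesis $\im f=Y$ (Lemma~\ref{improjection}) does its work. Comparing the two sides yields $\Ker v=fK$, as desired.

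The only genuinely delicate point is recognizing that one must replace $K$ by $K\vee\Ker f$ before attempting to push it through $f$: this is exactly what forces Axiom~\ref{ax5} into the argument, and it makes transparent the remark above that this lemma is equivalent to Axiom~\ref{ax5}. Everything after that enlargement is a short Galois-connection computation, with the fact that $f$ is a projection (i.e.\ $\im f=Y$) carrying the essential weight in the final step.
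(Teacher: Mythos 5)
Your proof is correct and follows essentially the same route as the paper's: reduce to one statement by duality, use Axiom~\ref{ax5} to make $K\vee\Ker f$ normal, factor through the projection $f$ via its universal property, and identify $fK$ with $\Ker v$ by a Galois-connection computation using $\im f=Y$. The only (immaterial) differences are that you take an arbitrary morphism $g'$ with kernel $K\vee\Ker f$ where the paper takes the associated projection $p'$, and you organize the computation as $f^{-1}\Ker v=K\vee\Ker f$ followed by pushing forward, rather than the paper's single chain of equalities starting from $pN$.
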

\begin{proof}
    Let $N$ be a normal subgroup of $G$ and let $p\colon G\rightarrow H$ be a projection. Then by Axiom~\ref{ax5}, the subgroup $N\vee \mathsf{Ker }p$ is normal.
    Now consider the projection $p'$ associated to $N\vee \mathsf{Ker }p$, whose kernel must be $N\vee \mathsf{Ker }p$ itself.
    Now clearly $\mathsf{Ker }p \subseteq N\vee \mathsf{Ker }p$ and by the universal property of the projection $p$, we must have $p'=vp$ for some morphism $v$. Then,
    \begin{equation*}
        \begin{split}
            pN & =pN\wedge \mathsf{Im }p
            =pp^{-1}pN
            = p(N\vee \mathsf{Ker }p)                 \\
               & =p(\mathsf{Ker }p')
            =p(p^{-1}v^{-1}1)
            =pp^{-1}(\mathsf{Ker }v)                  \\
               & =\mathsf{Ker }v \wedge \mathsf{Im }p
            = \mathsf{Ker }v\wedge H
            =\mathsf{Ker }v.
        \end{split}
    \end{equation*}
    Note that $\mathsf{Im }p=H$ by Lemma 1.3.
    Hence $pN$ is normal since $\mathsf{Ker }v$ is normal. The rest of the proof follows by duality.\qedhere
\end{proof}

\begin{lemma}\label{inverse im normal}
    Consider a morphism $f\colon A\rightarrow B$ and subgroups $X\lhd Y$ of $B$ (see Definition~\ref{Def Relative Normality}). If $f^{-1}Y$ is conormal, then $f^{-1}X\lhd f^{-1}Y$.
\end{lemma}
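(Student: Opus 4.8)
The plan is to verify the three defining conditions (RN1)--(RN3) of $f^{-1}X\lhd f^{-1}Y$ in turn, with essentially all the work concentrated in (RN3). Conditions (RN1) and (RN2) come for free: since $X\subseteq Y$, monotonicity of the inverse image map (property (G1)) gives $f^{-1}X\subseteq f^{-1}Y$, which is (RN1); and (RN2), the conormality of $f^{-1}Y$, is precisely the standing hypothesis of the lemma.

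For (RN3), I would write $P:=f^{-1}Y$ and observe that both embeddings $\iota_Y\colon Y/1\to B$ and $\iota_P\colon P/1\to A$ are available by Axiom~\ref{ax3}, since $Y$ is conormal (it is so because $X\lhd Y$) and $P$ is conormal (by hypothesis). The central construction is an induced morphism $\bar f\colon P/1\to Y/1$ satisfying $f\iota_P=\iota_Y\bar f$. To obtain it, I would first compute $\im(f\iota_P)$: using $\im\iota_P=P$ from Lemma~\ref{imembeddding} together with functoriality of direct images, one gets $\im(f\iota_P)=f(\im\iota_P)=fP=ff^{-1}Y=Y\wedge\im f\subseteq Y$, where the key equality $ff^{-1}Y=Y\wedge\im f$ is exactly the content of Axiom~\ref{ax2}. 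Since $\im(f\iota_P)\subseteq Y$, the universal property of the embedding $\iota_Y$ produces the desired unique $\bar f$ with $f\iota_P=\iota_Y\bar f$.

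With $\bar f$ in hand, the remainder is formal. Taking the inverse image of $X$ along both sides of $f\iota_P=\iota_Y\bar f$ and using functoriality of inverse images (property (F2)) yields
$$\iota_P^{-1}(f^{-1}X)=(f\iota_P)^{-1}X=(\iota_Y\bar f)^{-1}X=\bar f^{-1}(\iota_Y^{-1}X).$$
Since $X\lhd Y$ gives (RN3) for $Y$, the subgroup $\iota_Y^{-1}X$ is normal in $Y/1$, so $\iota_Y^{-1}X=\Ker g=g^{-1}1$ for some morphism $g$ out of $Y/1$. Functoriality of inverse images then gives $\iota_P^{-1}(f^{-1}X)=\bar f^{-1}(g^{-1}1)=(g\bar f)^{-1}1=\Ker(g\bar f)$, exhibiting $\iota_{f^{-1}Y}^{-1}(f^{-1}X)$ as a kernel and hence normal in $(f^{-1}Y)/1$. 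This establishes (RN3) and finishes the argument.

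The only genuinely delicate step is the construction of $\bar f$. Everything else is either monotonicity, the hypothesis, or straightforward functoriality; but producing the vertical comparison $\bar f$ between the domains of the two embeddings hinges on the containment $\im(f\iota_P)\subseteq Y$, and that containment is available only because Axiom~\ref{ax2} identifies $ff^{-1}Y$ with $Y\wedge\im f$. Once that image computation is in place, the universal property of $\iota_Y$ and the functoriality identities handle the rest.
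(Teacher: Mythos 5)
Your proposal is correct and follows essentially the same route as the paper's proof: both compute $\im(f\iota_{f^{-1}Y})=ff^{-1}Y=Y\wedge\im f\subseteq Y$ via Axiom~\ref{ax2}, obtain the comparison morphism from the universal property of $\iota_Y$, and then exhibit $\iota_{f^{-1}Y}^{-1}f^{-1}X$ as a kernel. The only cosmetic difference is that the paper composes with the specific projection $\pi_{\iota_Y^{-1}X}$ (available by Axiom~\ref{ax3}) where you compose with an arbitrary morphism $g$ witnessing normality of $\iota_Y^{-1}X$, and you spell out (RN1)--(RN2), which the paper leaves implicit.
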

\begin{proof}
    Notice that $\im (f\iota_{f^{-1}Y})=ff^{-1}Y=Y\wedge \im f \subseteq Y$. This implies that there exists a morphism $j\colon Y\rightarrow B$ such that $f\iota_{f^{-1}Y}=\iota_Y j$. Then we have $\Ker (\pi_{\iota_Y^{-1} X} j)=j^{-1}\iota_Y^{-1} X =\iota_{f^{-1}Y}^{-1}f^{-1}X$. Therefore $\iota_{f^{-1}Y}^{-1}f^{-1}X$ is a normal subgroup of $f^{-1}Y$ and hence $f^{-1}X\lhd f^{-1}Y$.\qedhere
\end{proof}

\begin{lemma}\label{proj diamond}
    Any two normal subgroups $N$ and $R$ of a group $G$ give rise to a commutative diagram
    $$\xymatrix{&Z&\\
            X\ar[ur]^x&&Y\ar[ul]_y\\
            &G , \ar[uu]^p \ar[ul]^n \ar[ur]_r&}$$
    where $n$, $p$ and $r$ are projections associated to $N$, $N\vee R$ and $R$, respectively. Moreover, in this diagram, $x$ and $y$ are projections and $y^{-1}xS=rn^{-1}S$ for any subgroup $S$ of $X$.
\end{lemma}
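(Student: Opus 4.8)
The plan is to construct $x$ and $y$ from the universal property of projections, obtain commutativity for free, and then verify the two remaining assertions ($x,y$ are projections, and the image identity) by formal manipulations with Axiom~\ref{ax2} and the Galois connection.

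First I would build the two comparison maps. Since $N\subseteq N\vee R=\Ker p$ and $n$ is a projection associated to $N$, the universal property of $n$ supplies a unique $x\colon X\to Z$ with $p=xn$; dually, since $R\subseteq N\vee R=\Ker p$ and $r$ is a projection associated to $R$, there is a unique $y\colon Y\to Z$ with $p=yr$. These two factorizations are precisely the commutativity of the two triangles, so no separate commutativity argument is needed. That $x$ and $y$ are projections then follows from the consequence of Lemma~\ref{kerembedding} noted just after Lemma~\ref{inj}: as $p=xn$ is a projection, so is $x$, and as $p=yr$ is a projection, so is $y$.

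For the identity $y^{-1}xS=rn^{-1}S$ I would push $S$ down to $G$ and back up through $Y$. Since $n$ is a projection it is surjective, so Axiom~\ref{ax2} gives $nn^{-1}S=S\wedge\im n=S$; hence $xS=x(nn^{-1}S)=p\,n^{-1}S$ using $p=xn$. Replacing $p$ by $yr$ yields $y^{-1}xS=y^{-1}y\,rn^{-1}S$, and the second formula of Axiom~\ref{ax2} rewrites this as $y^{-1}xS=rn^{-1}S\vee\Ker y$. Everything thus reduces to showing that adjoining $\Ker y$ has no effect, i.e.\ that $\Ker y\subseteq rn^{-1}S$.

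I expect this containment to be the only real obstacle, and I would clear it by computing $\Ker y$. From $p=yr$ we get $\Ker p=r^{-1}\Ker y$, so $N\vee R=r^{-1}\Ker y$; applying $r$, using surjectivity of $r$ (so that $rr^{-1}\Ker y=\Ker y\wedge\im r=\Ker y$), property (G3), and $rR=rr^{-1}1=1$, I obtain $\Ker y=r(N\vee R)=rN\vee rR=rN$. Since $1\subseteq S$ forces $N=n^{-1}1\subseteq n^{-1}S$, this gives $\Ker y=rN\subseteq rn^{-1}S$, so the join collapses and $y^{-1}xS=rn^{-1}S$. The only points demanding care are the repeated appeals to surjectivity of $n$ and $r$ (via $\im n=X$ and $\im r=Y$) and keeping the two Axiom~\ref{ax2} identities straight; the rest is purely formal.
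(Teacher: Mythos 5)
Your proposal is correct and follows essentially the same route as the paper's proof: the maps $x,y$ come from the universal property of the projections $n,r$ applied to $p$ (using $N,R\subseteq N\vee R=\Ker p$), they are projections because $p=xn=yr$ is one, and the identity $y^{-1}xS=rn^{-1}S$ is obtained by the same chain $y^{-1}xS=y^{-1}xnn^{-1}S=y^{-1}yrn^{-1}S=rn^{-1}S\vee\Ker y$ together with the computation $\Ker y=r(N\vee R)=rN\vee rR=rN$. The only (immaterial) difference is that you collapse the final join via monotonicity, $\Ker y=rN\subseteq rn^{-1}S$, whereas the paper writes $rn^{-1}S\vee rN=r(n^{-1}S\vee N)=rn^{-1}S$ using join-preservation of $r$.
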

\begin{proof}
    Firstly note that $N\subseteq N\vee R=\Ker p$ and hence, by Axiom~\ref{ax3}, there exists a unique morphism $x\colon X\rightarrow Z$ such that $p=xn$. By a similar argument, we may obtain the morphism $y$. We then obtain the desired commutative diagram. We also deduce that $y$ is a projection since $p=yr$, where $p$ and $r$ are both projections. Similarly $x$ is a projection.

    Moreover, by commutativity, we obtain $N\vee R=r^{-1}\Ker y$ and hence $\Ker y=r(N\vee R)=rN\vee rR=rN$ (noting that $\im r=Y$). Then:
    \begin{align*}
        y^{-1}xS=y^{-1}xnn^{-1}S
        =y^{-1}yrn^{-1}S
        =rn^{-1}S\vee \Ker y
        =rn^{-1}S\vee rN
        =r(n^{-1}S\vee N)
        =rn^{-1}S.
    \end{align*}
    Here, the last equality is obtained by noting that $N=\Ker n \subseteq n^{-1}S$.
\end{proof}

\begin{lemma}\label{inverse image embedding}
    Whenever $A\vee B\subseteq S$, where $S$ is a conormal subgroup of a group, then we have:
    $$\iota_S^{-1}(A\vee B)=\iota_S^{-1}A \vee \iota_S^{-1}B.$$
\end{lemma}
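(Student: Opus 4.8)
The plan is to convert the inverse-image join on the left-hand side into a direct-image join, where preservation of joins \emph{is} guaranteed by (G3). The starting point is that, since $S$ is conormal, the embedding $\iota_S\colon S/1\to G$ exists by Axiom~\ref{ax3}; by Lemma~\ref{imembeddding} its image is $\im\iota_S=S$, and by Lemma~\ref{kerembedding} it is an embedding precisely because $\Ker\iota_S=1$. The whole point is that the general inverse-image map does \emph{not} preserve joins (by (G3) it preserves only meets), so the hypotheses that $S$ be conormal and that $A\vee B\subseteq S$ must be used to reduce to a setting where this failure is irrelevant.

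First I would record, using the second part of Axiom~\ref{ax2} with $f=\iota_S$ and $\im\iota_S=S$, that $\iota_S\iota_S^{-1}C=C\wedge S$ for every subgroup $C$ of $G$. Because $A$, $B$ and $A\vee B$ are all contained in $S$ by hypothesis, this gives $\iota_S\iota_S^{-1}A=A$, $\iota_S\iota_S^{-1}B=B$, and $\iota_S\iota_S^{-1}(A\vee B)=A\vee B$. Next, applying the direct image map to the right-hand side and using that direct images preserve joins (property (G3)), I would compute
$$\iota_S\bigl(\iota_S^{-1}A\vee\iota_S^{-1}B\bigr)=\iota_S\iota_S^{-1}A\vee\iota_S\iota_S^{-1}B=A\vee B=\iota_S\iota_S^{-1}(A\vee B).$$
Thus both $\iota_S^{-1}A\vee\iota_S^{-1}B$ and $\iota_S^{-1}(A\vee B)$ have the same direct image under $\iota_S$.

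To conclude, I would invoke that $\iota_S$ is an embedding and hence its direct image map is injective (Lemma~\ref{inj}); cancelling $\iota_S$ in the displayed equality then yields $\iota_S^{-1}A\vee\iota_S^{-1}B=\iota_S^{-1}(A\vee B)$, as required. Equivalently, one may apply $\iota_S^{-1}$ to the identity $\iota_S(\iota_S^{-1}A\vee\iota_S^{-1}B)=A\vee B$ and use that $\iota_S^{-1}\iota_S T=T\vee\Ker\iota_S=T$ for any subgroup $T$ of $S/1$, since $\Ker\iota_S=1$. There is no genuine obstacle here beyond recognizing that the inclusion $A\vee B\subseteq\im\iota_S$ is exactly what places $A$ and $B$ in the image of the direct image map, so that the computation can be pushed through $\iota_S$ where joins behave well and then pulled back via injectivity (or via triviality of the kernel).
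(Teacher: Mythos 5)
Your proof is correct and takes essentially the same approach as the paper's: both arguments push the join through the direct image map of $\iota_S$ (where joins are preserved by (G3)), use the Axiom~\ref{ax2} identities together with $A,B,A\vee B\subseteq S=\im \iota_S$, and cancel via the triviality of $\Ker \iota_S$. Your ``equivalently'' variant at the end is in fact exactly the paper's chain of equalities, read in the opposite order.
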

\begin{proof}
    Note that $A\subseteq S$ (since $A\subseteq A\vee B\subseteq S$) and $B\subseteq S$ (since $B\subseteq A\vee B\subseteq S$). Hence $A\wedge S=A$ and $B\wedge S=B$. Therefore:
    \begin{align*}
        \iota_S^{-1}A\vee \iota_S^{-1}B & = (\iota_S^{-1}A\vee \iota_S^{-1}B)\vee 1                                                           \\
                                        & =(\iota_S^{-1}A\vee \iota_S^{-1}B)\vee \Ker \iota_{S}        &  & \text{[$\iota_{S}$ is injective]} \\
                                        & =\iota_S^{-1}\iota_S(\iota_S^{-1}A\vee \iota_S^{-1}B)                                               \\
                                        & =\iota_S^{-1}(\iota_s\iota_S^{-1}A\vee \iota_S\iota_S^{-1}B)                                        \\
                                        & =\iota_{S}^{-1}((A\wedge S)\vee (B\wedge S))                                                        \\
                                        & =\iota_S^{-1}(A\vee B).                                      &  & \qedhere
    \end{align*}
\end{proof}

\section{The Pyramid}
In this section, we consider the question when composition of direct and inverse image maps of morphisms does produce a new morphism. This gives us a tool for constructing new morphisms, which proves to be useful for diagram lemmas such as the Snake Lemma.

A diagram $$\xymatrix{X_0 \ar@{-}[r]^{f_1} & X_1 \ar@{-}[r]^{f_2}&X_2\ar@{-}[r]^{f_3} &X_3 \ar@{-}[r]^{f_4}&X_4\ar@{.}[r]&X_{n-1}\ar@{-}[r]^{f_n}&X_n }$$
of groups and morphisms is called a \textit{zigzag}. Note that the arrowheads have been deliberately left out as they are allowed to appear either on the left or on the right.
The groups in a zigzag will be called the \textit{nodes} of the zigzag. The \textit{opposite zigzag} of a given zigzag is simply the zigzag reflected horizontally. A subgroup $T$ of $X_n$ is obtained from a subgroup $S$ of $X_0$ by \textit{chasing} it forward along the zigzag when:
$$T=f_n^{\circ} \cdots f_4^{\circ} f_3^{\circ}f_2^{\circ}f_1^{\circ}S,$$
where $f_i^{\circ}=f_i$ if the arrowhead in the $i$-th position appears on the right and $f_i^{\circ}=f_i^{-1}$ when the arrowhead in the $i$-th position appears on the left. Note that we may obtain a subgroup $V$ of $X_0$ from a subgroup $U$ of $X_n$ by chasing it backwards along the zigzag in a dual manner.
In a zigzag, when all the arrows pointing to the left are isomorphisms, we say the zigzag is \textit{collapsible} and the composite $f_n^{\circ} \cdots f_4^{\circ} f_3^{\circ}f_2^{\circ}f_1^{\circ}$ is said to be the \textit{induced (homo)morphism}, where each $f_i^{\circ}$ is defined as before; however, in this case, $f_i^{-1}$ represents the inverse of the isomorphism $f_i$.

From a given a zigzag, we are able to construct a \textit{pyramid} of morphisms  --- see Figure~\ref{fig:pyramid},
\begin{figure}
    $$\xymatrix@!=5pt{&&&&&&&X_0^n \ar@{-}[dl] \ar@{-}[dr]&&&&&&&\\
        &&&&&&X_0^{n-1}\ar@{.}[ddll] \ar@{-}[dr]&&X_1^{n} \ar@{-}[dl]\ar@{-}[dr]&&&&&&\\
        &&&&&&&X_1^{n-1}\ar@{.}[ddll]\ar@{-}[dr]&&X_2^n \ar@{-}[dl]\ar@{-}[dr]&&&&&\\
        &&&&X_0^4 \ar@{-}[dl]\ar@{-}[dr]&&&&X_2^{n-1}\ar@{.}[ddll]\ar@{-}[dr]&&X_3^n\ar@{-}[dl]\ar@{-}[dr]&&&&\\
        &&&X_0^3 \ar@{-}[dl]\ar@{-}[dr]&&X_1^4 \ar@{-}[dl]\ar@{-}[dr]&&&&X_3^{n-1}\ar@{.}[ddll]\ar@{-}[dr]&&X_4^n\ar@{-}[dl]\ar@{.}[ddrr]&&\\
        &&X_0^2\ar@{-}[dl]\ar@{-}[dr]&&X_1^3\ar@{-}[dl]\ar@{-}[dr]&&X_2^4 \ar@{-}[dl]\ar@{-}[dr]&&&&X_4^{n-1}\ar@{.}[ddll]\ar@{.}[ddrr]&&&&\\
        &X_0^1\ar@{-}[dl]\ar@{-}[dr]&&X_1^2\ar@{-}[dl]\ar@{-}[dr]&&X_2^3\ar@{-}[dl]\ar@{-}[dr]&&X_3^4\ar@{-}[dl]\ar@{-}[dr]&&&&&&X_{n-1}^n\ar@{-}[dr]&\\
        X_0^0 \ar@{-}[rr]&&X_1^1\ar@{-}[rr]&&X_2^2\ar@{-}[rr]&&X_3^3\ar@{-}[rr]&&X_4^4\ar@{.}[rrrr]&&&&X_{n-1}^{n-1}\ar@{-}[ur]\ar@{-}[rr]&&X_n^n}
    $$
    \caption{A Pyramid of Morphisms}
    \label{fig:pyramid}
\end{figure}
which is a commutative diagram where the base of the pyramid is the given zigzag. In the pyramid, each upward pointing arrow is a projection and each downward pointing arrow is an embedding. We reproduce the construction of the pyramid from \cite{DNA IV}.

Given a zigzag $$\xymatrix@=8pt{X_0^0 \ar@{-}[rr]&&X_1^1\ar@{-}[rr]&&X_2^2\ar@{-}[rr]&&X_3^3\ar@{-}[rr]&&X_4^4\ar@{.}[rr]&&X_{n-1}^{n-1}\ar@{-}[rr]&&X_n^n}$$ we construct the triangles in the base of the pyramid
$$\xymatrix{&X^i_{i-1}\ar@{-}[dr]&\\
    X^{i-1}_{i-1}\ar@{-}[rr]^{f_i}\ar@{-}[ur]&&X^i_i}$$
by decomposing the morphism $f_i$ into an embedding $m$, associated to the image of $f_i$, composed with a projection $e$, associated to the kernel of $f_i$. The fact that any morphism can be decomposed in this manner follows from Axiom~\ref{ax4}. Note that since the direction is not indicated in the zigzag, the possibilities for the triangles in the base are
\[
    \vcenter{\xymatrix{&X^i_{i-1}\ar[dr]^m&\\
    X^{i-1}_{i-1}\ar[rr]^{f_i}\ar[ur]^e &&X^i_i}}
    \qquad\qquad\text{and}\qquad\qquad
    \vcenter{\xymatrix{&X^i_{i-1}\ar[dl]^m&\\
    X^{i-1}_{i-1}&&X^i_i\ar[ll]^{f_i} \ar[ul]^e .}}
\]
We then obtain the base
$$\xymatrix@=8pt{&X_0^1\ar@{-}[dl]\ar@{-}[dr]&&X_1^2\ar@{-}[dl]\ar@{-}[dr]&&X_2^3\ar@{-}[dl]\ar@{-}[dr]&&X_3^4\ar@{-}[dl]\ar@{-}[dr]&& &&X_{n-1}^n\ar@{-}[dl]\ar@{-}[dr]&\\
    X_0^0 \ar@{-}[rr]&&X_1^1\ar@{-}[rr]&&X_2^2\ar@{-}[rr]&&X_3^3\ar@{-}[rr]&&X_4^4\ar@{.}[rr]&&X_{n-1}^{n-1}\ar@{-}[rr]&&X_n^n}$$
of the pyramid. So far the upward pointing arrows are projections and the downward pointing arrows are embeddings.
To build the rest of the pyramid from this base, we consider a wedge
$$\xymatrix@=8pt{X^i_{j-1}\ar@{-}[dr]&&X^{i+1}_j\ar@{-}[dl]\\
    &X^i_j&}$$
and describe how to build a diamond
$$\xymatrix@=8pt{&X^{i+1}_{j-1}\ar@{-}[dl]\ar@{-}[dr]&\\
    X^i_{j-1}\ar@{-}[dr]&&X^{i+1}_j\ar@{-}[dl]\\
    &X^i_j&}$$
(by describing how to construct the top wedge in the diamond). If both arrows in the wedge are pointing upward, they must be projections and we may complete it to a projection diamond by Lemma~\ref{proj diamond}. Dually, if both arrows point downward, they must be embeddings and we may complete it to an embedding diamond by the dual of Lemma~\ref{proj diamond}.

There are still two remaining cases for the given wedge, which are
\[
    \vcenter{\xymatrix@=8pt{X^i_{j-1}\ar[dr]&&X^{i+1}_j\\
    &X^i_j\ar[ur]&}}
    \qquad\qquad\text{and}\qquad\qquad
    \vcenter{\xymatrix@=8pt{X^i_{j-1}&&X^{i+1}_j\ar[dl]\\
    &X^i_j .\ar[ul]&}}
\]
In both cases, we consider the composite of the arrows in the wedge as indicated by the dotted arrows in the wedges
\[
    \vcenter{\xymatrix@=8pt{X^i_{j-1}\ar[dr]\ar@{.>}[rr]&&X^{i+1}_j\\
    &X^i_j\ar[ur]&}}
    \qquad\qquad\text{and}\qquad\qquad
    \vcenter{\xymatrix@=8pt{X^i_{j-1}&&X^{i+1}_j\ar[dl]\ar@{.>}[ll]\\
    &X^i_j\ar[ul]&}}
\]
respectively.
Then, in both cases, we construct the top wedge (and hence the diamond) by decomposing the dotted arrow as an embedding composed with a projection (as we did to build the triangles in the base). This decomposition is again guaranteed by Axiom~\ref{ax4}.

It is then possible to construct the pyramid by repeating the construction of the diamonds to obtain every next layer until we get to the top, which will complete the construction.

Note that in each case of constructing the diamond, we have that the upward pointing arrows are projections and the downward pointing arrows are embeddings. This property is also true for the construction of the triangles in the base of the pyramid. Therefore, all arrows pointing upward in the pyramid are projections and all arrows pointing downward in the pyramid are embeddings.

\begin{figure}
    $$\xymatrix{X_0\ar[d]^{g_0} \ar@{-}[r]^{f_1}&X_1\ar[d]^{g_1}\ar@{-}[r]^{f_2}&X_2\ar[d]^{g_2}\ar@{-}[r]^{f_3}&X_3\ar[d]^{g_3}\ar@{-}[r]^{f_4}&X_4\ar[d]^{g_4}\ar@{.}[r]&X_n \ar[d]^{g_n}\\
            X'_0\ar@{-}[r]^{f'_1}&X'_1\ar@{-}[r]^{f'_2}&X'_2 \ar@{-}[r]^{f_3} &X'_3\ar@{-}[r]^{f_4}&X'_4\ar@{.}[r]&X'_n}$$
    \caption{Diagram connecting two zigzags}
    \label{fig: commutative zigzags}
\end{figure}
Consider a diagram in Figure~\ref{fig: commutative zigzags}, where both the top and the bottom rows are zigzags.
A square $$\xymatrix{X_{i-1}\ar@{-}[r]^{f_i}\ar[d]_{g_{i-1}}&X_i\ar[d]^{g_i}\\
    X'_{i-1}\ar@{-}[r]^{f'_i}&X'_i}$$ from the diagram in Figure~\ref{fig: commutative zigzags} is said to be commutative when
\[
    \begin{cases}
        g_i f_i=f'_i g_{i-1},  & \text{if $f_i$ and $f'_i$ are both right-pointing,}            \\
        g_{i-1}f_i=f'_i g_i,   & \text{if $f_i$ and $f'_i$ are both left-pointing,}             \\
        g_{i-1}=f'_i g_i f_i,  & \text{if $f_i$ is right-pointing and $f'_i$ is left-pointing,} \\
        g_i= f'_i g_{i-1} f_i, & \text{if $f_i$ is left-pointing and $f'_i$ is right-pointing.}
    \end{cases}
\]

\begin{lemma} \label{zigzag iso}
    When in a commutative diagram $$\xymatrix{X_{i-1}\ar@{-}[r]^{f_i}\ar[d]_{g_{i-1}}&X_i\ar[d]^{g_i}\\
        X'_{i-1}\ar@{-}[r]^{f'_i}&X'_i}$$ from the diagram in Figure~\ref{fig: commutative zigzags}, either the top or the bottom arrows are isomorphisms, replacing any of these arrows with their inverses (pointing in the opposite direction), still results in a commutative diagram.
\end{lemma}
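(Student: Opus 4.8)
The plan is to argue by a finite case analysis directly on the four-clause definition of a commutative square, using only the category axioms (Axiom~\ref{ax1}) together with the fact that an isomorphism admits a two-sided inverse. None of the subobject or Galois-connection structure is needed here, since the statement concerns only equalities of composites of morphisms. As the hypothesis permits the top edge $f_i$, the bottom edge $f'_i$, or both to be isomorphisms, I would first reduce to inverting a single edge at a time and treat the two edges separately.

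The key observation I would exploit is that, in every one of the four defining clauses, the top arrow $f_i$ occurs as the \emph{first-applied} (rightmost) factor and the bottom arrow $f'_i$ as the \emph{last-applied} (leftmost) factor. Consequently, reversing the orientation of an isomorphic top edge amounts precisely to post-composing the relevant commutativity equation with $f_i^{-1}$, while reversing an isomorphic bottom edge amounts to pre-composing with $(f'_i)^{-1}$. For instance, if both $f_i$ and $f'_i$ are right-pointing the square commutes by $g_i f_i = f'_i g_{i-1}$; post-composing both sides with $f_i^{-1}$ and using $f_i f_i^{-1}=\mathsf{id}$ yields $g_i = f'_i g_{i-1} f_i^{-1}$, which is exactly the clause required of the new square, in which the top edge $f_i^{-1}$ is left-pointing and the bottom edge is right-pointing. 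The remaining three configurations for the top edge, and the four configurations for the bottom edge, are handled by the same one-line manipulation (post-composition with $f_i^{-1}$, respectively pre-composition with $(f'_i)^{-1}$), each time collapsing the first- or last-applied factor via $f f^{-1}=\mathsf{id}$ or $f^{-1}f=\mathsf{id}$ and landing in the clause dictated by the new pair of orientations.

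I expect the only genuine work to be bookkeeping rather than mathematics: in each of the eight cases one must check that flipping the chosen edge really does move the square into the intended clause of the commutativity definition, and that the domains and codomains of the composites match up. Once the uniform pattern above is recorded, the case in which both edges are inverted is immediate, since the two operations act at opposite ends of each composite and hence commute. The main point to be careful about is the direction convention of the zigzag, that is, correctly matching ``left-pointing/right-pointing'' to the appropriate clause; beyond that, the result is a routine consequence of Axiom~\ref{ax1} and the definition of an isomorphism.
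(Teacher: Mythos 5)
Your proposal is correct and follows essentially the same route as the paper's own proof: a case analysis over the four commutativity clauses, cancelling the isomorphic edge by composing both sides of the relevant equation with its inverse on the side where that edge occurs (the paper writes out two top-edge cases this way and disposes of the bottom-edge case by duality rather than by your direct symmetric manipulation, which is an inessential difference). One cosmetic remark: what you call ``post-composing'' with $f_i^{-1}$ is composition on the right (pre-composition in the usual convention), and dually for the bottom edge, but your displayed equations and use of $f_i f_i^{-1}=\mathsf{id}$ are correct, so this is purely terminological.
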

\begin{proof}
    Let us assume $f_i$ is an isomorphism. If both $f_i$ and $f'_i$ are right-pointing, then $g_i f_i=f'_i g_{i-1}$. Hence, if we apply the inverse image map of $f_i$ on the right of both sides, we obtain $g_i=f'_i g_{i-1} f_i^{-1}$. This shows that if we replace $f_i$ with its inverse image map then the resulting diagram is commutative. If $f_i$ is left-pointing and $f'_i$ is right-pointing, then $g_i=f'_i g_{i-1} f_i$. If we then apply the inverse image map on the right of both sides, we obtain $g_if_i^{-1}=f'_i g_{i-1}$. The remaining two cases follow by a similar argument to the ones above. Moreover, the case that $f'_i$ is an isomorphism follows by duality. \qedhere
\end{proof}

\begin{lemma}\label{induced morphism}
    In a diagram given by Figure~\ref{fig: commutative zigzags}, where all squares are commutative, if the top and bottom rows are collapsible, then $g_n f=f' g_0$ where $f$ and $f'$ are the morphisms induced by the top and the bottom rows, respectively. Moreover, if all morphisms $g_i$ are isomorphisms such that replacing them with their inverses pointing up retain commutativity in each square in  Figure~\ref{fig: commutative zigzags}, then the top row is collapsible if and only if the bottom row is collapsible.
\end{lemma}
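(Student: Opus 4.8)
The plan is to handle the two assertions separately, in each case reducing to the situation in which every arrow points to the right by means of Lemma~\ref{zigzag iso}.

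For the first assertion, since both rows are collapsible, every left-pointing arrow among the $f_i$ and the $f'_i$ is an isomorphism. I would apply Lemma~\ref{zigzag iso} once for each such arrow, replacing it by its inverse pointing to the right; the lemma guarantees that each square remains commutative throughout (and in a square whose top and bottom arrows both point left, one applies the lemma twice). After all replacements, both rows consist entirely of right-pointing morphisms $\hat f_i=f_i^{\circ}$ and $\hat f'_i=(f'_i)^{\circ}$, the induced morphisms are literally the composites $f=\hat f_n\cdots\hat f_1$ and $f'=\hat f'_n\cdots\hat f'_1$, and every square is an ordinary commutative square $g_i\hat f_i=\hat f'_i g_{i-1}$. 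The identity $g_nf=f'g_0$ then follows by the standard pasting of commutative squares: starting from $g_n\hat f_n\cdots\hat f_1$ and repeatedly rewriting $g_i\hat f_i$ as $\hat f'_i g_{i-1}$, the vertical map migrates from $g_n$ on the left to $g_0$ on the right, leaving $\hat f'_n\cdots\hat f'_1 g_0=f'g_0$.

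For the second assertion I would show that collapsibility of the top row forces collapsibility of the bottom row; the converse follows by symmetry, since flipping every $g_i$ to $g_i^{-1}$ pointing up produces, by hypothesis, a commutative diagram of the same shape with the $X'$-row on top, to which the first implication applies. So assume the top row is collapsible and fix a position $i$ at which $f'_i$ points left; I must prove $f'_i$ is an isomorphism. If $f_i$ also points left, the relevant commutativity relation reads $g_{i-1}f_i=f'_ig_i$, so $f'_i=g_{i-1}f_ig_i^{-1}$ is an isomorphism because $f_i$ is one (the top row being collapsible) and the $g$'s are isomorphisms. If instead $f_i$ points right, I combine the two available relations: commutativity of the original square gives $g_{i-1}=f'_ig_if_i$, so $f'_i$ has the right inverse $g_if_ig_{i-1}^{-1}$ and is a split epimorphism, while commutativity of the flipped square (whose top arrow $f'_i$ is left-pointing and whose bottom arrow $f_i$ is right-pointing) gives $g_i^{-1}=f_ig_{i-1}^{-1}f'_i$, so $f'_i$ has the left inverse $g_if_ig_{i-1}^{-1}$ and is a split monomorphism; a morphism that is simultaneously a split mono- and a split epimorphism is an isomorphism, since its one-sided inverses then coincide. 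Hence $f'_i$ is an isomorphism in either case, and the bottom row is collapsible.

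The pasting argument of the first part and the matched-direction case of the second part are routine bookkeeping. The main subtlety lies in the mixed-direction case of the second part, where neither commutativity relation alone forces $f'_i$ to be invertible: the original square only makes $f'_i$ a split epimorphism, the flipped square only makes it a split monomorphism, and it is precisely the hypothesis that flipping the $g_i$ preserves commutativity that supplies the second relation needed to upgrade these one-sided inverses to a genuine inverse. A minor point to verify carefully is that replacing the left-pointing isomorphisms by their inverses in the first part really does leave each square commutative, but this is exactly the content of Lemma~\ref{zigzag iso}.
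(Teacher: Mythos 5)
Your proposal is correct and follows essentially the same route as the paper's proof: the first identity is obtained by using Lemma~\ref{zigzag iso} to reorient the left-pointing isomorphisms and then pasting commutative squares, and the second assertion is proved by the same case analysis on arrow directions, combining the commutativity relation of the original square with that of the flipped square (verticals replaced by their inverses) to produce the two one-sided inverses that yield an isomorphism. The only cosmetic differences are that you prove the implication top$\Rightarrow$bottom and invoke symmetry where the paper does the reverse, and that you phrase the conclusion via coinciding split mono/epi inverses where the paper says ``injective'' and ``surjective''; these are equivalent.
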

\begin{proof}
    If we apply Lemma~\ref{zigzag iso} to the top and bottom rows, we may then apply commutativity of each square to deduce $g_nf=f' g_0$. We next assume all the morphisms $g_i$ are isomorphisms such that replacing them with their inverses pointing up retain commutativity in each square in  Figure~\ref{fig: commutative zigzags}. Suppose the bottom row is collapsible and consider a left-pointing morphism $f_i$ in the top row. If $f'_i$ is left-pointing (so that, in particular, it is an isomorphism), then it follows that $f_i$ is an isomorphism (since the remaining three morphisms in the commutative square are isomorphisms). If $f'_i$ is right-pointing, then $g_i= f'_i g_{i-1}f_i$ and hence $f_i$ is injective. The commutative square formed by replacing $g_i$ and $g_{i-1}$ with their inverses yields $g_{i-1}^{-1}=f_i g_i^{-1} f'_i$. It follows that $f_i$ is surjective and therefore is an isomorphism. Hence the top row is collapsible.  A similar argument shows that if top row is collapsible, then the bottom row is collapsible.
\end{proof}

\begin{lemma}\label{commutative unique inducedd}
    Given a diagram in Figure~\ref{fig: commutative zigzags}, where all squares commute and the top and the bottom rows are collapsible zigzags, if $g_0$ and $g_n$ are identity morphisms then both collapsible zigzags induce the same morphism.
\end{lemma}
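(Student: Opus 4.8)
The plan is to derive this statement as an immediate consequence of Lemma~\ref{induced morphism}. That lemma already establishes, under exactly the hypotheses that all squares commute and both rows are collapsible, the relation $g_n f = f' g_0$, where $f$ and $f'$ denote the morphisms induced by the top and bottom rows respectively. All that remains is to feed in the extra assumption that $g_0$ and $g_n$ are identity morphisms and simplify.

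First I would observe that for $g_0\colon X_0 \to X'_0$ and $g_n\colon X_n \to X'_n$ to be identity morphisms in the sense of the definition given in Section~\ref{framework}, their domains and codomains must coincide, so that $X_0 = X'_0$ and $X_n = X'_n$. Consequently $f$ and $f'$ are parallel morphisms $X_0 \to X_n$, and the assertion $f = f'$ is well-typed. Then, substituting $g_0 = \mathsf{id}_{X_0}$ and $g_n = \mathsf{id}_{X_n}$ into the equation $g_n f = f' g_0$ supplied by Lemma~\ref{induced morphism}, and invoking the defining properties $\mathsf{id}_{X_n} f = f$ and $f' \mathsf{id}_{X_0} = f'$ of identity morphisms, yields $f = f'$ directly.

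There is essentially no obstacle to overcome here: the substantive work has already been carried out in Lemma~\ref{induced morphism}, and the present statement is merely its specialization to the case where the two outer vertical maps are identities. The only point requiring a moment's care is the typing remark above --- that being identity morphisms forces the endpoints of the two zigzags to agree --- which is what guarantees that the conclusion $f = f'$ makes sense as an equation between parallel morphisms sharing a common domain and codomain.
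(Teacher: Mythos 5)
Your proof is correct and follows exactly the paper's own argument: the paper likewise deduces the result by setting $g_0=\mathsf{id}_{X_0}$ and $g_n=\mathsf{id}_{X_n}$ in the equation $g_n f = f' g_0$ provided by Lemma~\ref{induced morphism}. Your additional remark that identity morphisms force $X_0 = X'_0$ and $X_n = X'_n$, so that $f=f'$ is an equation between parallel morphisms, is a small but welcome clarification the paper leaves implicit.
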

\begin{proof}
    The result follows by setting $g_0=\mathsf{id}_{X_0}$ and $g_n=\mathsf{id}_{X_n}$ and applying Lemma~\ref{induced morphism}. \qedhere
\end{proof}

\begin{lemma}\label{unique pyramid}
    Two pyramids built from the same zigzag admit isomorphisms between the corresponding nodes so that the diagram formed by the two pyramids and the isomorphisms is commutative.
\end{lemma}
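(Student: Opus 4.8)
The plan is to build the connecting isomorphisms by induction on the layers of the pyramid, transporting them upward one diamond at a time. Write $P$ and $P'$ for the two pyramids and, for a node $X^j_i$, write $(X^j_i)'$ for the corresponding node of $P'$. The base layer of either pyramid is the given zigzag itself, so there the corresponding nodes literally coincide and we take the identity isomorphisms. For the first layer, each apex $X^i_{i-1}$ of a base triangle is the intermediate object of an embedding-projection factorization of $f_i$ furnished by Axiom~\ref{ax4}; the two pyramids thus provide two factorizations of the \emph{same} morphism $f_i$, and Lemma~\ref{unique decomposition} yields a unique isomorphism $X^i_{i-1}\to(X^i_{i-1})'$ compatible with both legs. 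The inductive step fixes a diamond with apex $X^{i+1}_{j-1}$ above a wedge on $X^i_{j-1}$, $X^i_j$, $X^{i+1}_j$, and assumes isomorphisms $\phi$, $\chi$, $\psi$ on these three lower nodes commuting with every wedge arrow; I then split into the cases used in the construction.

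If both wedge arrows point up (a projection diamond), write $n\colon X^i_j\to X^i_{j-1}$ and $r\colon X^i_j\to X^{i+1}_j$ for the two projections and $p=xn=yr$ for the projection onto the apex, so that $\Ker p=\Ker n\vee\Ker r$ by Lemma~\ref{proj diamond}. Consider $p'\chi\colon X^i_j\to(X^{i+1}_{j-1})'$, which is again a projection since $\chi$ is an isomorphism. As $\chi$ is an isomorphism, its inverse-image map preserves joins and, using $n'\chi=\phi n$ and $r'\chi=\psi r$ together with the triviality of the kernels of $\phi$ and $\psi$, it carries $\Ker n'$ and $\Ker r'$ back to $\Ker n$ and $\Ker r$; hence $p$ and $p'\chi$ are projections from $X^i_j$ with the same kernel. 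Lemma~\ref{classificationproj} then provides a unique isomorphism $\theta\colon X^{i+1}_{j-1}\to(X^{i+1}_{j-1})'$ with $\theta p=p'\chi$. Cancelling the epimorphisms $n$ and $r$ in $\theta xn=x'\phi n$ and $\theta yr=y'\psi r$ gives $\theta x=x'\phi$ and $\theta y=y'\psi$, so $\theta$ is compatible with both legs. The embedding-diamond case is strictly dual, using Lemma~\ref{classificationembeddings}.

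In a mixed wedge, the apex arises by decomposing the composite $c\colon X^i_{j-1}\to X^{i+1}_j$ of the two wedge arrows as an embedding $m$ after a projection $e$. The induction hypothesis gives $\psi c=c'\phi$, so the single morphism $\psi c$ acquires two embedding-projection factorizations, namely $\psi c=(\psi m)e$ and $\psi c=m'(e'\phi)$, in which $\psi m$ and $m'$ are embeddings and $e$, $e'\phi$ are projections. Lemma~\ref{unique decomposition} then yields a unique isomorphism $\theta$ on the apex satisfying $\theta e=e'\phi$ and $m'\theta=\psi m$, which are exactly the required compatibilities. The opposite mixed orientation is handled identically. Assembling these isomorphisms over all nodes produces the asserted commutative diagram relating $P$ and $P'$: commutativity within each pyramid is built into its construction, while commutativity of each connecting square is precisely what the three uniqueness lemmas deliver.

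The main obstacle is the demand, at every inductive step, that the transported isomorphism on an apex be compatible with \emph{both} descending legs simultaneously, not merely one. This is what forces the use of the full uniqueness statements (Lemmas~\ref{unique decomposition}, \ref{classificationembeddings} and \ref{classificationproj}) in tandem with epimorphism/monomorphism cancellation, and it is where one must verify that the relevant kernels (respectively images) are genuinely matched by the lower isomorphisms; this matching relies on the fact that the image maps of an isomorphism preserve joins and meets, so that $\Ker n\vee\Ker r$ is transported correctly.
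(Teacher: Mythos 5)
Your proof is correct and takes essentially the same route as the paper's: induction up the layers of the pyramid, identity morphisms on the base zigzag, Lemma~\ref{unique decomposition} for the base triangles and for the two mixed diamond cases, and Lemma~\ref{classificationproj} (dually, Lemma~\ref{classificationembeddings}) for projection (embedding) diamonds via matching of the joined kernels transported by the bottom isomorphism. Your write-up in fact spells out details the paper leaves implicit --- notably why $p'\chi$ is again a projection with kernel $\Ker p$, and why cancelling the epimorphisms $n$ and $r$ yields compatibility with \emph{both} legs of the diamond.
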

\begin{proof}
    The isomorphisms of the triangles in the base are obtained by identity morphisms (to connect the zigzags which are the same) and Lemma~\ref{unique decomposition} (for the top node of the triangle). Furthermore, this guarantees commutativity for the triangles in the base of both pyramids together with the isomorphisms. Moreover, two arrows in corresponding triangles in the base of the pyramid point in the same direction due to the construction of these triangles. If we then assume that the isomorphisms between the nodes in the bottom wedge of a diamond and the diagram formed by the two corresponding wedges together with its isomorphisms is commutative, we will show that the top nodes of the diamonds, formed from these wedges, are isomorphic. Suppose one of the diamonds considered is a projection diamond, then due to the isomorphisms connecting the bottom wedges, the remaining diamond must also be a projection diamond. Moreover, using the commutativity of the bottom wedges and their isomorphisms, we deduce that the direct image of the join of the kernels of the bottom wedge is preserved by the direct image of the isomorphism corresponding to the bottom node. We then conclude that the vertical projection (with the top node as the codomain) composed with the isomorphism of the bottom node yields a projection associated to the same subgroup as the one associated to the vertical projection of the other diamond. Therefore, by applying Lemma~\ref{classificationproj}, we obtain the desired isomorphism. Note that this isomorphism makes the diagram commutative. A dual argument handles the case of embedding diamonds. For the remaining two cases, we apply Lemma~\ref{unique decomposition} to obtain the isomorphism of the top node. Moreover, this also yields commutativity. \qedhere
\end{proof}

\begin{definition}
    A \textit{subquotient} is a zigzag in which all left pointing arrows are embeddings and all right pointing arrows are projections.
\end{definition}

\begin{lemma}\label{collapsible subquotient}
    The opposite zigzag of a subquotient is collapsible if and only if chasing the trivial subgroup of the final node backward along the zigzag results in the trivial subgroup of the initial node.
\end{lemma}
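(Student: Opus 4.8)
The plan is to reduce both sides of the equivalence to a single pointwise condition on the morphisms of the subquotient: that every projection occurring in the zigzag has trivial kernel (equivalently, is an isomorphism). First I would unwind the definitions. Write the subquotient as $X_0,\dots,X_n$ with connecting morphisms $f_1,\dots,f_n$, where each right-pointing $f_i$ is a projection and each left-pointing $f_i$ is an embedding. In the opposite zigzag the arrows that were right-pointing—precisely the projections—become left-pointing, so by the definition of collapsibility the opposite zigzag is collapsible if and only if each of these projections is an isomorphism. Since a projection already has full image, Lemma~\ref{classificationiso} together with Lemma~\ref{improjection} shows that such a morphism is an isomorphism exactly when its kernel is trivial. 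Thus the left-hand side is equivalent to the statement that $\mathsf{Ker}f_i=1$ for every projection $f_i$ in the zigzag.

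Next I would write the backward chase explicitly. Chasing $1\subseteq X_n$ backward produces subgroups $V_n=1$ and $V_{i-1}=g_iV_i$, where $g_i=f_i^{-1}$ (inverse image) when $f_i$ is a right-pointing projection and $g_i=f_i$ (direct image) when $f_i$ is a left-pointing embedding; the result of the chase is $V_0$. The single technical fact I would isolate is that each of these operations \emph{reflects the trivial subgroup}, i.e.\ $g_iS=1$ implies $S=1$. For an embedding $f_i$ this follows from Axiom~\ref{ax2}, since $f_i^{-1}f_iS=S\vee\mathsf{Ker}f_i=S$ (the kernel being trivial by Lemma~\ref{kerembedding}), so $f_iS=1$ forces $S=f_i^{-1}(1)=\mathsf{Ker}f_i=1$. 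For a projection $f_i$, again by Axiom~\ref{ax2}, $f_if_i^{-1}S=S\wedge\mathsf{Im}f_i=S$ (the image being the top element, by Lemma~\ref{improjection}), so $f_i^{-1}S=1$ forces $S=f_i(1)=1$ using (G4).

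With this fact in hand both implications are short. Assuming $V_0=1$: the reflection property propagates triviality upward along the chase, since $V_{i-1}=g_iV_i=1$ yields $V_i=1$, so inductively $V_i=1$ for all $i$; then for every projection index $i$ we have $\mathsf{Ker}f_i=f_i^{-1}(1)=f_i^{-1}V_i=V_{i-1}=1$, so every projection is an isomorphism and the opposite zigzag is collapsible. Conversely, if every projection has trivial kernel, then each $g_i$ sends $1$ to $1$ (inverse image of a projection because $f_i^{-1}(1)=\mathsf{Ker}f_i=1$, direct image of an embedding because $f_i(1)=1$ by (G4)), so the chase starting at $V_n=1$ returns $V_0=1$.

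The main obstacle, and the only genuinely non-formal step, is the forward-direction extraction of kernel-triviality for each individual projection from the single global equation $V_0=1$. The reflection property is exactly what makes this work, so the crux is verifying that both backward operations reflect the trivial subgroup. Here the asymmetry is worth noting: the argument for projections relies on their surjectivity ($\mathsf{Im}f_i$ being the top element), while the argument for embeddings relies on their triviality of kernel, and both are packaged uniformly through the two identities of Axiom~\ref{ax2}.
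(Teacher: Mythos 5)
Your proposal is correct and follows essentially the same route as the paper's proof: both reduce collapsibility of the opposite zigzag to triviality of the kernels of the projections (via Lemmas~\ref{improjection} and \ref{classificationiso}), and both handle the nontrivial direction by propagating triviality of the chased subgroup node by node from the initial node, using exactly the two facts you package as the ``reflection property'' (embeddings have trivial kernel; inverse image maps of projections are right inverses of their direct image maps). Your isolation of that reflection step as a stand-alone claim makes the induction cleaner than the paper's ``by repeating this process'' phrasing, but the mathematical content is identical.
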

\begin{proof}
    The left pointing arrows in the opposite zigzag of a subquotient are projections. Therefore the opposite zigzag of a subquotient is collapsible if and only each of the projections has trivial kernel (i.e., they are associated to the trivial subgroup of the corresponding node). If the opposite zigzag of a subquotient is collapsible, then it is clear that chasing the trivial subgroup backward along the zigzag results in the trivial subgroup of the initial node (since each right pointing arrow would have trivial kernel). Conversely if we assume that chasing the trivial subgroup of the final subgroup backward along the zigzag results in the trivial subgroup of the initial node, then we obtain that chasing the trivial subgroup of the final node backward along the zigzag up to the node to the right of the initial node results in the trivial subgroup. This is true since if the first morphism was left pointing, it must be an embedding and hence it would have trivial kernel. If it were right pointing, then it must be a projection and hence we may take the direct image map and obtain the desired conclusion (noting that the inverse image maps of projections are right inverses of their direct image maps). By repeating this process, we may deduce that chasing the trivial subgroup of the final node results in the trivial subgroup at each node. Therefore each right pointing morphism must be injective and hence all projections are isomorphisms (since right pointing morphisms are exactly the projections). Hence, the left pointing arrows in the opposite zigzag are isomorphisms.
\end{proof}

\begin{definition}
    Consider a zigzag $$\xymatrix{X_{p_1}^{q_1}\ar@{-}[r]&X_{p_2}^{q_2}\ar@{-}[r]&X_{p_3}^{q_3}\ar@{.}[r]&X_{p_n}^{q_n}}$$ obtained from the pyramid. The zigzag is said to be \textit{horizontal} when $\{p_{i+1}-p_i, q_{i+1}-q_i\}=\{0,1\}$ (i.e., when $p_{i+1}=p_i$ and $q_{i+1}-q_i=1$ or when $p_{i+1}-p_i=1$ and $q_{i+1}=q_i$) for each $i\in \{1,2,\dots ,n-1\}$. The zigzag is said to be \textit{vertical} when $\{p_i-p_{i+1}, q_{i+1}-q_i\}=\{0,1\}$ (i.e., when $p_i=p_{i+1}$ and $q_{i+1}-q_i=1$ or when $p_i-p_{i+1}=1$ and $q_{i+1}=q_i$) for each $i\in \{1,2,\dots ,n-1\}$.
\end{definition}

The horizontal zigzag that joins $X_0^0$ with $X_n^n$, that runs up the left side and down the right side of the pyramid, is said to be the \textit{principal horizontal zigzag} of the pyramid. The vertical zigzag going up along the left side of the triangular outline of the pyramid, which joins $X_0^0$ and $X_n^0$, is said to be the \textit{left principal vertical zigzag} of the pyramid. Similarly, the vertical zigzag going up the right side of the triangular outline of the pyramid, which joins $X_n^n$ with $X_0^n$, is said to be the \textit{right principal vertical zigzag} of the pyramid.

\begin{lemma}\label{vertical subquotients}
    Vertical zigzags are subquotients.
\end{lemma}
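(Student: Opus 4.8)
The plan is to exploit the single structural feature that distinguishes a vertical zigzag: it strictly ascends the pyramid. Recall that, by construction, every edge of the pyramid is either a projection pointing upward (toward the apex) or an embedding pointing downward (toward the base). The claim will follow once this up/down orientation is matched against the left/right convention governing arrowheads in a zigzag.

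First I would record that a vertical zigzag ascends. Assigning to each node $X_p^q$ the \emph{height} $q-p$, the two clauses in the definition of a vertical zigzag --- namely $p_i=p_{i+1}$ with $q_{i+1}=q_i+1$, or $p_{i+1}=p_i-1$ with $q_{i+1}=q_i$ --- both raise the height by exactly $1$ in passing from node $i$ to node $i+1$. Hence node $i+1$ always sits strictly higher in the pyramid than node $i$.

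Next I would analyse a single edge, say the $i$-th one, joining node $i$ (the lower node) and node $i+1$ (the higher node). The underlying pyramid morphism is either a projection or an embedding, and in each case it dovetails with the zigzag convention. If it is a projection, then it points upward, so its domain is the lower node $i$ and its codomain the higher node $i+1$; in the zigzag layout (where node $i$ is drawn to the left of node $i+1$) this places the arrowhead on the right, and the arrow is a projection, as required. If instead it is an embedding, then it points downward, so its domain is node $i+1$ and its codomain node $i$; this places the arrowhead on the left, and the arrow is an embedding, again as required.

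Since every edge of the pyramid is of exactly one of these two kinds, the case split is exhaustive, and in both cases the defining condition of a subquotient (right-pointing arrows are projections and left-pointing arrows are embeddings) holds; this shows the vertical zigzag is a subquotient. The only delicate point is precisely this matching of conventions, and the pleasant feature of the argument is that the ascending property renders it unnecessary to decide, edge by edge, whether a given pyramid morphism happens to be a projection or an embedding, because both alternatives are automatically compatible with the subquotient condition.
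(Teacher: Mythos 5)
Your proof is correct and follows essentially the same route as the paper's: the paper likewise observes that in a vertical zigzag the right-pointing arrows are exactly the upward-pointing pyramid arrows (projections) and the left-pointing arrows are exactly the downward-pointing ones (embeddings). Your explicit height argument merely spells out the ascending property that the paper leaves implicit.
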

\begin{proof}
    By definition of a vertical zigzag, the right pointing arrows coincide with the arrows pointing upward and the left pointing arrows coincide with the arrows pointing downward. The result then follows by noting that, by construction, the upward and downward pointing arrows of the pyramid are projections and embeddings respectively.
\end{proof}

\begin{lemma}
    Chasing a subgroup from one node to another downward via a vertical zigzag and then upward via the same zigzag back to the original node gives back the original subgroup.
\end{lemma}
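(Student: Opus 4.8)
The plan is to reduce the round trip to its behaviour on a single arrow and then telescope. By Lemma~\ref{vertical subquotients}, a vertical zigzag is a subquotient, so each right-pointing arrow is a projection and each left-pointing arrow is an embedding. Moreover, traversing a vertical zigzag forward strictly increases the pyramid height $q-p$ at every step, so that moving \emph{upward} means chasing \emph{forward} and moving \emph{downward} means chasing \emph{backward}. Consequently the round trip in the statement begins at the upper node, applies the backward-chasing maps arrow by arrow down to the lower node, and then applies the forward-chasing maps back up. Writing the nodes of the relevant sub-zigzag as $Y_0,\dots,Y_k$ with $Y_k$ the upper node and $Y_0$ the lower one, the whole operation is the composite $U_k\cdots U_1 D_1\cdots D_k$, where $D_j$ (respectively $U_j$) is the downward (respectively upward) image operation associated to the $j$-th arrow.

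The key step is that for each arrow the composite $U_j D_j$ acts as the identity on subgroups of the node from which we descend. If the $j$-th arrow is a right-pointing projection $e$, then descending uses the inverse image $e^{-1}$ and ascending uses the direct image $e$, so $U_j D_j$ sends a subgroup $T$ to $e e^{-1} T = T\wedge \im e$ by Axiom~\ref{ax2}; since $e$ is a projection, $\im e$ is the whole codomain by Lemma~\ref{improjection}, whence $e e^{-1} T = T$. If instead the $j$-th arrow is a left-pointing embedding $m$, then descending uses the direct image $m$ and ascending uses the inverse image $m^{-1}$, so $U_j D_j$ sends $T$ to $m^{-1} m T = T\vee \Ker m$ by Axiom~\ref{ax2}; since $m$ is an embedding, $\Ker m = 1$ by Lemma~\ref{kerembedding}, whence $m^{-1} m T = T$. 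Peeling off the innermost pair $U_1 D_1$ (which acts on subgroups of $Y_1$) and repeating, the composite collapses to the identity, so the original subgroup is recovered.

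The main point to get right is the order of traversal: it is essential that we descend before we ascend. For a projection $e$ the descending-then-ascending composite is $e e^{-1}$, which equals the identity precisely because $\im e$ is the full codomain; the opposite composite $e^{-1} e T = T\vee \Ker e$ would generally differ from $T$, since a projection may have a nontrivial kernel. Dually, for an embedding the favourable composite is $m^{-1}m$, while $m m^{-1} T = T \wedge \im m$ need not be $T$. Thus the asymmetry between the two directions is exactly what makes the statement hold, and the only real care needed is the bookkeeping that matches each upward or downward instruction to the correct direct- or inverse-image map.
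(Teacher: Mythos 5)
Your proof is correct and follows essentially the same route as the paper's: both reduce the statement to the fact that vertical zigzags are subquotients (Lemma~\ref{vertical subquotients}) and then cancel each descend/ascend pair, using that $ee^{-1}$ is the identity on subgroups for a projection $e$ and $m^{-1}m$ is the identity for an embedding $m$. Your derivation of these cancellations from Axiom~\ref{ax2} together with Lemma~\ref{kerembedding} merely makes explicit what the paper invokes as the right-inverse properties of the corresponding Galois connections.
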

\begin{proof}
    We know that vertical zigzags are subquotients by Lemma~\ref{vertical subquotients}. Hence chasing a subgroup downward via a vertical zigzag applies direct image maps of embeddings and inverse image maps of projections to the subgroup. If we then chase the subgroup upward via the same zigzag, we would further apply inverse image maps of embeddings and direct image maps of projections. However, since the direct image map of an embedding is a right inverse of its corresponding inverse image map and since the inverse image map of a projection is a right inverse of its corresponding direct image map, we are able to cancel out each direct image map with its inverse image map and we are left with the original subgroup. \qedhere
\end{proof}

\begin{lemma} \label{vertical chasing}
    Chasing a trivial/largest subgroup upward along a vertical zigzag results in a trivial/largest subgroup.
\end{lemma}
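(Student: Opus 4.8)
The plan is to reduce the statement to two elementary facts about how the extremal subgroups behave under direct and inverse image maps, and then propagate these along the zigzag by induction.

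First I would unwind what ``chasing upward'' means in terms of image maps. By Lemma~\ref{vertical subquotients} a vertical zigzag is a subquotient, so every right-pointing arrow is a projection and every left-pointing arrow is an embedding. Moreover, for a vertical zigzag the quantity $q_i-p_i$ strictly increases with $i$ (it goes up by $1$ in either case of the definition: $p$ stays fixed while $q$ grows, or $q$ stays fixed while $p$ drops), so the first listed node is the lowest and chasing upward coincides with chasing \emph{forward} along the zigzag. By the definition of forward chasing we then apply the direct image map along each right-pointing arrow and the inverse image map along each left-pointing one. Combining these two observations, chasing upward applies \emph{only} direct images along projections and inverse images along embeddings.

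Next I would record the two key facts. For a projection $e$, the direct image map sends the trivial subgroup to the trivial subgroup by $(G4)$, and sends the largest subgroup to $\im e$, which is the largest subgroup of the codomain by Lemma~\ref{improjection}. Dually, for an embedding $m$, the inverse image map sends the largest subgroup to the largest subgroup by $(G4)$, and sends the trivial subgroup to $\Ker m=1$ by Lemma~\ref{kerembedding}. Hence a single upward step preserves both the property of being the trivial subgroup and the property of being the largest subgroup.

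Finally I would conclude by a straightforward induction on the number of nodes: writing the successive subgroups produced by the chase, the trivial (respectively largest) subgroup at the starting node is carried to the trivial (respectively largest) subgroup at each subsequent node, so chasing the trivial subgroup upward yields the trivial subgroup and chasing the largest subgroup upward yields the largest subgroup. The only step needing genuine care is the first paragraph, namely matching the geometric notion of ``upward'' with the correct algebraic operations (direct image of projections, inverse image of embeddings); this is exactly where Lemma~\ref{vertical subquotients} together with the subquotient convention does the work, after which the remaining steps are immediate from $(G4)$ and Lemma~\ref{kerembedding}.
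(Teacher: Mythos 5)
Your proof is correct and takes essentially the same route as the paper's: reduce via Lemma~\ref{vertical subquotients} to the subquotient structure, then use that projections are surjective (so direct images carry largest subgroups to largest subgroups) and that embeddings have trivial kernel (so inverse images carry trivial subgroups to trivial subgroups), with (G4) covering the remaining two single-step cases. The paper's proof is simply a terser rendering of the same argument, leaving the identification of ``upward'' with ``forward'' chasing, the single-step facts, and the concluding induction implicit.
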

\begin{proof}
    Vertical zigzags are subquotients by Lemma~\ref{vertical subquotients}. Therefore, since projections are surjective, we have that chasing the largest subgroup upward along a vertical zigzag results in a largest subgroup. Moreover, since embeddings are injective, chasing a trivial subgroup upward results in a trivial subgroup. \qedhere
\end{proof}

\begin{lemma}\label{vertical collapsible}
    Chasing a trivial/largest subgroup downward/upward along a vertical zigzag results in a trivial/largest subgroup if and only if the zigzag is collapsible in the downward/upward direction.
\end{lemma}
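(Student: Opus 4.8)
The plan is to read this statement as the specialisation to vertical zigzags of Lemma~\ref{collapsible subquotient} together with its dual, the bridge being Lemma~\ref{vertical subquotients}. The two halves of the statement are dual to one another, so I would establish the one concerning the trivial subgroup in full and obtain the one concerning the largest subgroup by duality.

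First I would record what chasing does along a vertical zigzag. By Lemma~\ref{vertical subquotients} such a zigzag is a subquotient, so its right-pointing (upward) arrows are projections and its left-pointing (downward) arrows are embeddings. Chasing \emph{upward} therefore applies direct images of projections and inverse images of embeddings, and by Lemma~\ref{vertical chasing} this preserves both the trivial and the largest subgroup unconditionally; the content of the present lemma thus sits entirely in the downward chase, which applies inverse images of projections and direct images of embeddings. Tracking the trivial subgroup downward, at each embedding step its direct image is again trivial by~(G4), so triviality can only be lost at a projection step, where the inverse image of the trivial subgroup is precisely the kernel of that projection. Hence the downward chase returns the trivial subgroup if and only if every projection in the zigzag has trivial kernel, that is (by Lemma~\ref{kerembedding} and Lemma~\ref{classificationiso}) if and only if every projection is an isomorphism --- equivalently, the opposite zigzag is collapsible, which is collapsibility in the downward direction. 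This is exactly Lemma~\ref{collapsible subquotient} read through Lemma~\ref{vertical subquotients}.

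For the remaining half I would argue dually, interchanging the roles of projections and embeddings (and of the smallest and the largest subgroup). Chasing the largest subgroup, the inverse image along each projection returns the largest subgroup, whereas the direct image along an embedding~$m$ returns its image $\im m$, which equals the largest subgroup exactly when $m$ is surjective, that is (by Lemma~\ref{improjection} and Lemma~\ref{classificationiso}) when $m$ is an isomorphism. So the largest subgroup is preserved if and only if every embedding is an isomorphism, which is collapsibility in the upward direction; formally this is the dual of Lemma~\ref{collapsible subquotient} applied to the opposite of the given vertical zigzag, whose left-pointing arrows are projections and whose right-pointing arrows are embeddings. The one delicate point --- the main obstacle --- is the direction bookkeeping: correctly identifying which image map each arrow contributes to the chase, and matching the two resulting conditions, \emph{all projections are isomorphisms} and \emph{all embeddings are isomorphisms}, to collapsibility in the downward and in the upward direction respectively. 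Once this dictionary is fixed, both equivalences drop out immediately.
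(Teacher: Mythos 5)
Your proof is correct and takes essentially the same route as the paper's own (two-line) proof: invoke Lemma~\ref{vertical subquotients} to see that a vertical zigzag is a subquotient, then apply Lemma~\ref{collapsible subquotient} and its dual. Your explicit disambiguation of the statement --- that both substantive halves concern the \emph{downward} chase (the upward chase being unconditional by Lemma~\ref{vertical chasing}), with collapsibility downward meaning all projections are isomorphisms and collapsibility upward meaning all embeddings are --- is precisely the reading supported by the paper's proof and by the way the lemma is used in Theorem~\ref{HIT}.
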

\begin{proof}
    Vertical zigzags are subquotients by Lemma~\ref{vertical subquotients}. The result then follows by applying Lemma~\ref{collapsible subquotient} and its dual. \qedhere
\end{proof}

\begin{lemma}
    Chasing a subgroup from one node to another forward or backward along a horizontal zigzag does not depend on the choice of path.
\end{lemma}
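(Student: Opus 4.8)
The plan is to reduce the statement to a purely local claim about a single diamond of the pyramid and then verify that claim according to the type of diamond. Recall that each node of the pyramid is a group $X_p^q$, and that a horizontal step either increases the superscript by one (crossing an upward edge, hence a projection) or increases the subscript by one (crossing a downward edge, hence an embedding). Reading $(p,q)$ as a lattice point with ``height'' $q-p$, a horizontal zigzag between two fixed nodes is exactly a monotone lattice path between the corresponding points, and the four nodes of a single diamond are the corners $X_p^q$ (left $L$), $X_p^{q+1}$ (top $T$), $X_{p+1}^{q}$ (bottom $B$) and $X_{p+1}^{q+1}$ (right $R$) of a unit square. The two horizontal routes across this diamond, namely $L$--$T$--$R$ and $L$--$B$--$R$, are precisely the two ways of crossing the unit square.

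First I would invoke the standard combinatorial fact that any two monotone lattice paths between the same two points, both lying inside the pyramid, can be transformed into one another by a finite sequence of elementary moves, each replacing the $L$--$B$--$R$ route across one diamond by the $L$--$T$--$R$ route, or conversely. Consequently it suffices to prove the local claim that \emph{chasing a subgroup from $L$ to $R$ across a single diamond yields the same result whether one chases via $T$ or via $B$}; the full statement then follows by applying this one diamond at a time over the region enclosed between the two given zigzags.

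It then remains to check the local claim for each of the four diamond types arising in the construction of the pyramid. In the two \emph{mixed} cases the top wedge $L$--$T$--$R$ was built precisely by decomposing the bottom composite $L$--$B$--$R$ (an embedding followed by a projection, or the reverse), so both routes realise the \emph{same} morphism $\varphi$ between $L$ and $R$; moreover in each mixed case the two edges of a route point consistently (both along, or both against, the direction of travel), so the forward chase reduces on both routes to a single direct image, respectively inverse image, of $\varphi$, and the two outcomes coincide. The genuinely nontrivial cases are the \emph{projection diamond} and the \emph{embedding diamond}. For the projection diamond, writing its edges as $n\colon B\to L$, $r\colon B\to R$, $x\colon L\to T$, $y\colon R\to T$ (all projections, as in Lemma~\ref{proj diamond}), chasing $S\subseteq L$ forward via $T$ gives $y^{-1}xS$ while chasing via $B$ gives $rn^{-1}S$, and these are equal by the final identity $y^{-1}xS=rn^{-1}S$ of Lemma~\ref{proj diamond}. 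The embedding diamond is handled dually, using the dual of Lemma~\ref{proj diamond}. Backward chasing is treated symmetrically: it amounts to the same analysis with travel reversed, for which the mixed cases again collapse to a single image of $\varphi$, while the projection diamond now requires the companion identity $x^{-1}yS=nr^{-1}S$ for $S\subseteq R$, obtained by applying Lemma~\ref{proj diamond} with the roles of its two normal subgroups interchanged (equivalently, by applying the forward result to the opposite zigzags).

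I expect the main obstacle to be concentrated entirely in the projection diamond and its dual: this is exactly where commutativity of the diagram as morphisms is \emph{not} enough, and where the modular-law content packaged in Lemma~\ref{proj diamond} is indispensable. The mixed cases and the combinatorial reduction to single diamonds are routine by comparison.
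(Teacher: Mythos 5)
Your proposal is correct and follows essentially the same route as the paper's own proof: both arguments reduce the statement to commutativity of single diamonds --- settled by Lemma~\ref{proj diamond} for projection diamonds, its dual for embedding diamonds, and the observation that the two mixed diamonds realise one and the same composite morphism, so that chasing reduces to a direct or inverse image of that composite --- and then propagate this local fact across the region between the two paths. The only cosmetic difference is the bookkeeping: the paper raises any horizontal zigzag to the extremal path through $X_{p_i}^{q_j}$, whereas you connect two arbitrary paths by elementary square flips, and your explicit handling of backward chasing (interchanging the two normal subgroups in Lemma~\ref{proj diamond}) merely spells out what the paper leaves implicit.
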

\begin{proof}
    By Lemma~\ref{proj diamond} and its dual, we note that chasing a subgroup along the top wedge of a projection or embedding diamond, is equivalent to chasing it along the bottom. Moreover, this property is clear for the remaining two cases of diamonds in the pyramid. From this, we deduce that the pyramid is commutative in the sense that chasing a subgroup along the top wedge of a diamond yields the same result as chasing along the bottom wedge of the same diamond. Now consider any horizontal zigzag between a node $X_{p_i}^{q_i}$ and $X_{p_j}^{q_j}$. By applying the commutativity of each diamond, for which the horizontal zigzag includes the bottom wedge of the diamond, we obtain that chasing a subgroup along this zigzag yields the same result as chasing the subgroup along the horizontal zigzag that joins $X_{p_i}^{q_i}$ with $X_{p_i}^{q_j}$ followed by chasing along the horizontal zigzag joining $X_{p_i}^{q_j}$ with $X_{p_j}^{q_j}$. Therefore chasing a subgroup along a horizontal zigzag between two nodes does not depend on the choice of path.  \qedhere
\end{proof}

\begin{definition}\label{DefZigzagInduction}
    When the principal horizontal zigzag of a pyramid constructed from a given zigzag is collapsible, the given zigzag is said to \textit{induce a (homo)morphism} and the morphism induced by the principal horizontal zigzag is said to be the \textit{induced (homo)morphism} of the given zigzag.
\end{definition}

\begin{lemma}
    \label{unique induced}
    For a given zigzag, the induced morphism is unique when it exists (in particular, it does not depend on the actual pyramid).
\end{lemma}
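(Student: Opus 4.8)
The plan is to compare the principal horizontal zigzags of two arbitrary pyramids built from the same zigzag, exploiting the structural comparison supplied by Lemma~\ref{unique pyramid}. Concretely, I would fix two pyramids $P$ and $P'$ constructed from the given zigzag and suppose the zigzag induces a morphism, so that the principal horizontal zigzag of $P$ is collapsible. By Lemma~\ref{unique pyramid}, there are isomorphisms between the corresponding nodes of $P$ and $P'$ making the combined diagram commute. Restricting these isomorphisms to the nodes lying on the two principal horizontal zigzags produces a diagram of exactly the shape of Figure~\ref{fig: commutative zigzags}: the top row is the principal horizontal zigzag of $P$, the bottom row is that of $P'$, the connecting vertical morphisms are the isomorphisms from Lemma~\ref{unique pyramid}, and every square commutes.

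Next I would pin down the two endpoint morphisms. The two principal horizontal zigzags share their endpoints, namely the base nodes $X_0^0$ and $X_n^n$ of the common zigzag, and as recorded in the proof of Lemma~\ref{unique pyramid} the isomorphisms connecting these base nodes are the identities. Hence in the diagram above $g_0=\mathsf{id}_{X_0^0}$ and $g_n=\mathsf{id}_{X_n^n}$. Since all vertical arrows are isomorphisms and, being part of a commutative diagram, retain the commutativity of each square when reversed to point upward (by the argument of Lemma~\ref{zigzag iso}), the second part of Lemma~\ref{induced morphism} shows that the top row is collapsible if and only if the bottom row is. Thus the principal horizontal zigzag of $P'$ is also collapsible, so $P'$ likewise induces a morphism.

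Finally, I would invoke Lemma~\ref{commutative unique inducedd}: in the diagram just constructed all squares commute, both rows are collapsible, and the extreme vertical morphisms $g_0$ and $g_n$ are identities, so the two rows induce the same morphism. Since $P$ and $P'$ were arbitrary, this shows the induced morphism is independent of the chosen pyramid, which is exactly the asserted uniqueness.

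The main obstacle I anticipate is purely organisational: checking that the isomorphisms provided by Lemma~\ref{unique pyramid} genuinely assemble the two principal horizontal zigzags into a single diagram of the form of Figure~\ref{fig: commutative zigzags}, with identity morphisms at the two ends, and verifying the minor point that reversing these vertical isomorphisms preserves square-commutativity so that the collapsibility transfer in Lemma~\ref{induced morphism} applies. Once this bookkeeping is in place, the conclusion follows immediately from Lemmas~\ref{induced morphism} and~\ref{commutative unique inducedd}, with no further computation required.
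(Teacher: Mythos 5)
Your proof follows the paper's own argument step for step: Lemma~\ref{unique pyramid} to connect the two pyramids by isomorphisms, Lemma~\ref{induced morphism} to transfer collapsibility from the first principal horizontal zigzag to the second, and Lemma~\ref{commutative unique inducedd} (using that the endpoint isomorphisms at $X_0^0$ and $X_n^n$ are identities) to conclude that the induced morphisms agree. The only place your justification goes astray is the claim that reversing the vertical isomorphisms preserves commutativity ``by the argument of Lemma~\ref{zigzag iso}'': that lemma concerns inverting the \emph{horizontal} arrows of a square, not the vertical ones, and the vertical analogue is actually false for squares whose top and bottom arrows point in opposite directions. For instance, for ordinary groups take $X_{i-1}=X'_{i-1}=1$, $X_i=X'_i=G$ nontrivial, both verticals identities, $f_i\colon 1\to G$ and $f'_i\colon G\to 1$; then $g_{i-1}=f'_ig_if_i$ holds, but the reversed square would require $\mathsf{id}_G=f_if'_i$, which fails. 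What rescues the step --- and what the paper appeals to with ``by the construction of the isomorphisms in each case'' --- is that corresponding arrows of two pyramids built from the same zigzag point in the same direction by construction (this is recorded in the proof of Lemma~\ref{unique pyramid}), so every square in your restricted diagram has parallel horizontal arrows, and for such squares composing both sides of $g_if_i=f'_ig_{i-1}$ with the inverse isomorphisms does preserve commutativity. With that justification substituted, your proof is precisely the paper's.
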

\begin{proof}
    Consider a zigzag which induces a morphism using a specific pyramid constructed from the zigzag. If we have any other pyramid constructed from this zigzag, we are able to connect the two pyramids with isomorphisms to form a commutative diagram (by Lemma~\ref{unique pyramid}). Notice that in each commutative square formed by the isomorphisms, replacing both the isomorphisms with their inverses preserves commutativity (by the construction of the isomorphisms in each case). Therefore, by Lemma~\ref{induced morphism}, we obtain that the principal horizontal zigzag (of the second pyramid) is collapsible. Moreover, the morphisms connecting the initial and final nodes of the zigzag are identity morphisms. Hence, by applying Lemma~ \ref{commutative unique inducedd}, we obtain that the induced morphisms are equal. \qedhere
\end{proof}

The following lemma shows that for a collapsible zigzag, the notion of an induced morphism introduced at the start of this section agrees with the one introduced in Definition~\ref{DefZigzagInduction}.

\begin{lemma}
    When a zigzag is collapsible, the principal horizontal zigzag of the corresponding pyramid is collapsible, and moreover, the morphism induced by the first collapsible zigzag is the same as the one induced by the principal horizontal zigzag.
\end{lemma}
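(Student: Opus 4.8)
The plan is to reduce the statement to the pyramid's base and then climb the pyramid one diamond at a time, checking at each stage that both collapsibility and the induced morphism are preserved.

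First I would pass from the given collapsible zigzag, with morphisms $f_1,\dots,f_n$, to the \emph{refined base zigzag} obtained by splitting each $f_i=m_ie_i$ into the embedding $m_i$ (associated to $\im f_i$) and the projection $e_i$ (associated to $\Ker f_i$) used to build the base triangles. This refined zigzag is horizontal and runs along the bottom of the pyramid from $X_0^0$ to $X_n^n$; I claim it is collapsible and induces the same morphism as the original. Indeed, when $f_i$ is right-pointing both $m_i$ and $e_i$ are right-pointing, so they impose no collapsibility condition and contribute $m_ie_i=f_i=f_i^{\circ}$ to the composite. When $f_i$ is left-pointing it is an isomorphism, hence both an embedding and a projection; by the remark following Lemma~\ref{inj} together with Lemma~\ref{classificationiso}, the factors $m_i$ and $e_i$ are then each isomorphisms, both left-pointing, and their contribution $e_i^{-1}m_i^{-1}=(m_ie_i)^{-1}=f_i^{-1}=f_i^{\circ}$ again matches. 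Thus the refined base zigzag is collapsible with the same induced morphism.

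Next I would show that replacing the bottom wedge of a single diamond by its top wedge --- i.e.\ pushing a horizontal zigzag upward across one diamond --- preserves collapsibility and the induced morphism, the rest of the zigzag being left untouched. There are four cases. In the two ``mixed'' cases the diamond was built by decomposing a composite as an embedding after a projection, so by construction the bottom and top wedges have equal composites; moreover in one case both wedges are entirely right-pointing (no condition arises), while in the other both wedges consist of two left-pointing arrows, whose isomorphy transfers across the decomposition exactly as in the base-triangle computation above. For a projection diamond (Lemma~\ref{proj diamond}) the bottom wedge is $X^i_{j-1}\xleftarrow{n}X^i_j\xrightarrow{r}X^{i+1}_j$ and the top wedge is $X^i_{j-1}\xrightarrow{x}X^{i+1}_{j-1}\xleftarrow{y}X^{i+1}_j$, with $xn=yr$. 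The only left-pointing arrows are $n$ (bottom) and $y$ (top). If the lower zigzag is collapsible then $n$ is an isomorphism, so $\Ker n=1$; since the proof of Lemma~\ref{proj diamond} gives $\Ker y=r\Ker n$, we obtain $\Ker y=1$, whence $y$ is an embedding by Lemma~\ref{kerembedding} and, being also a projection, an isomorphism by Lemma~\ref{classificationiso}. The induced morphisms agree because $xn=yr$ with $n$ and $y$ invertible yields $rn^{-1}=y^{-1}x$, and these are precisely the segment morphisms read off the two wedges. The embedding diamond is handled dually.

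Finally I would conclude by induction. The horizontal zigzags from $X_0^0$ to $X_n^n$ are exactly the monotone staircase paths across the pyramid, among which the refined base is the lowest and the principal horizontal zigzag (up the left edge to the apex $X_0^n$ and down the right edge) is the highest; any two adjacent such paths differ by a single diamond flip of the type analysed above. Starting from the refined base and flipping every diamond upward, collapsibility and the induced morphism are preserved at each step, so the principal horizontal zigzag is collapsible and induces the same morphism as the refined base, hence as the original collapsible zigzag. I expect the main obstacle to be the projection-diamond case (and dually the embedding-diamond case): one must verify that the new left-pointing arrow $y$ is genuinely an isomorphism, which is not formal and relies precisely on the identity $\Ker y=r\Ker n$ extracted from Lemma~\ref{proj diamond}.
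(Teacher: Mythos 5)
Your proof is correct and is essentially the paper's own argument: both rest on propagating isomorphy of the left-pointing arrows from the base triangles (where the factors of a decomposed isomorphism are isomorphisms) up through the diamonds, with the one non-formal step being the projection diamond (and its dual), settled exactly as you settle it, via the identity $\Ker y = r\Ker n$ coming from Lemma~\ref{proj diamond}. The paper packages this as a single induction showing that every north-west- or south-west-pointing arrow of the pyramid is an isomorphism and then appeals to commutativity of the pyramid for the equality of induced morphisms, whereas your lattice-path flip induction with the explicit identity $y^{-1}x = rn^{-1}$ is a presentational refinement of the same approach rather than a different route.
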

\begin{proof}
    We will first show that morphisms in the pyramid pointing north-west or south-west are isomorphisms. This is true for the triangles in the base since the south-west and north-west pointing arrows arise from the decomposition of left-pointing arrows into an embedding of its image composed with a projection of its kernel. For projection diamonds, if the north-west pointing arrow in the bottom wedge is an isomorphism, then by commutativity of the diamond (guaranteed by Lemma~\ref{proj diamond}), we obtain that the north-west pointing morphism in the top wedge is an isomorphism. By duality, if the south-west pointing morphism in the bottom wedge of an embedding diamond is an isomorphism then the south-west pointing morphism in the top wedge is an isomorphism. In the case of the bottom wedge being a north-west pointing morphism composed with a south-west pointing morphism being isomorphisms, we obtain that the constructed vertical morphism is an isomorphism and hence the north-west and south-west pointing morphisms in the top wedge are isomorphisms. Note that the last case for the diamond does not contain north-west or south-west pointing morphisms and hence we do not need to consider it. This shows that all south-west and north-west pointing morphisms in the pyramid are isomorphisms. Moreover, this shows that the principal vertical zigzag is collapsible and the zigzag induces a morphism (in the sense of the pyramid). Moreover, by commutativity of the pyramid, the morphism induced by the principal horizontal zigzag is the same as the morphism induced by the original zigzag. \qedhere
\end{proof}


\begin{lemma}\label{zigzag inverse}
    When a zigzag and its opposite zigzag are both collapsible, the morphisms induced by both of these zigzags are isomorphisms and are inverses of each other.
\end{lemma}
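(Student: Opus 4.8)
The plan is to reduce everything to the observation that, under the stated hypotheses, \emph{every} arrow of the zigzag is an isomorphism, after which both induced morphisms become composites of isomorphisms whose factors cancel in pairs. First I would record the crucial consequence of the two collapsibility assumptions. By definition, collapsibility of a zigzag requires all of its left-pointing arrows to be isomorphisms. Forming the opposite zigzag by reflecting horizontally interchanges left-pointing and right-pointing arrows, so the left-pointing arrows of the opposite zigzag are precisely the right-pointing arrows of the original. Hence collapsibility of the original makes every left-pointing arrow an isomorphism, while collapsibility of the opposite makes every right-pointing arrow an isomorphism; together these say that \emph{all} arrows $f_1,\dots,f_n$ of the zigzag are isomorphisms.

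Next, since both zigzags are collapsible, I would invoke the lemma immediately preceding, which identifies the pyramid-induced morphism of a collapsible zigzag with the elementary composite defined at the start of the section; this lets me compute with the composites directly. Writing $f_i^\circ=f_i$ when $f_i$ points right and $f_i^\circ=f_i^{-1}$ when $f_i$ points left (an isomorphism in either case, by the previous paragraph), the morphism induced by the original zigzag is $\phi=f_n^\circ\cdots f_1^\circ\colon X_0\to X_n$, a composite of isomorphisms and therefore itself an isomorphism.

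The heart of the argument is to compute the induced morphism of the opposite zigzag and check that it is $\phi^{-1}$. Reading the opposite zigzag from $X_n$ to $X_0$, its arrow between $X_i$ and $X_{i-1}$ is $f_i$ with reversed orientation, and a short case check shows that in both cases its contribution to the opposite induced morphism is exactly $(f_i^\circ)^{-1}\colon X_i\to X_{i-1}$: if $f_i$ originally pointed right then $f_i^\circ=f_i$ and the reversed (now left-pointing, isomorphic) arrow contributes $f_i^{-1}=(f_i^\circ)^{-1}$, while if $f_i$ pointed left then $f_i^\circ=f_i^{-1}$ and the reversed (now right-pointing) arrow contributes $f_i=(f_i^\circ)^{-1}$. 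Consequently the morphism induced by the opposite zigzag is $\psi=(f_1^\circ)^{-1}(f_2^\circ)^{-1}\cdots(f_n^\circ)^{-1}\colon X_n\to X_0$. Cancelling adjacent factors then yields $\psi\phi=\mathsf{id}_{X_0}$ and $\phi\psi=\mathsf{id}_{X_n}$, so $\phi$ and $\psi$ are mutually inverse isomorphisms.

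I expect the only real subtlety to be the bookkeeping: keeping the composition order consistent and correctly tracking how the horizontal reflection acts on each arrow's direction, and hence on $f_i^\circ$. Once the uniform identity that the opposite contribution at position $i$ equals $(f_i^\circ)^{-1}$ is in place, the telescoping cancellation is immediate and the conclusion follows.
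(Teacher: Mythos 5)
Your proposal is correct and follows essentially the same route as the paper's own proof: both arguments hinge on the observation that collapsibility of the zigzag makes the left-pointing arrows isomorphisms while collapsibility of the opposite zigzag makes the right-pointing ones isomorphisms, whence every $f_i$ is invertible and the two induced composites are mutually inverse. You merely spell out the telescoping cancellation $(f_1^\circ)^{-1}\cdots(f_n^\circ)^{-1}f_n^\circ\cdots f_1^\circ=\mathsf{id}$ (and the appeal to the preceding lemma identifying the pyramid-induced morphism with the elementary composite), which the paper leaves implicit in its closing sentence.
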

\begin{proof}
    Suppose we are given a zigzag
    $$\xymatrix{X_0 \ar@{-}[r]^{f_1} & X_1 \ar@{-}[r]^{f_2}&X_2\ar@{-}[r]^{f_3} &X_3 \ar@{-}[r]^{f_4}&X_4\ar@{.}[r]&X_{n-1}\ar@{-}[r]^{f_n}&X_n }$$ such that both the zigzag and its opposite
    $$\xymatrix{X_n \ar@{-}[r]^{f_{n}} & X_{n-1} \ar@{-}[r]^{f_{n-1}}&X_{n-2}\ar@{-}[r]^{f_{n-2}} &X_{n-3} \ar@{-}[r]^{f_{n-3}}&X_{n-4}\ar@{.}[r]&X_{1}\ar@{-}[r]^{f_1}& X_0}$$
    are collapsible. Note that, in the opposite zigzag, each $f_i$ points in the opposite direction to the $f_i$ in the original zigzag.
    Since the original zigzag is collapsible, the arrows in it that point to the left are isomorphisms. Moreover, the arrows pointing to the right in the original zigzag are isomorphisms too, since they point to the left in the opposite zigzag, which is also collapsible. It then follows that morphisms induced by the given zigzag and its opposite
    are isomorphisms that are inverses of each other.
\end{proof}

\begin{theorem}[Homomorphism Induction Theorem, \cite{DNA IV}] \label{HIT}
    For a zigzag to induce a homomorphism it is necessary and sufficient that chasing the trivial subgroup of the initial node forward along the zigzag results in the trivial subgroup of the final node, and chasing the largest subgroup of the final node backward along the zigzag results in the largest subgroup of the initial node. Moreover, when a zigzag induces a homomorphism, the induced homomorphism is unique and the direct and inverse image maps of the induced homomorphism are given by chasing a subgroup forward and backward, respectively, along the zigzag.
\end{theorem}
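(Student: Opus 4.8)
The plan is to reduce everything to the defining property of induction (Definition~\ref{DefZigzagInduction}): the given zigzag induces a homomorphism precisely when the principal horizontal zigzag of its pyramid is collapsible. First I would observe that this principal horizontal zigzag is the concatenation of two pieces — the left principal vertical zigzag, traversed upward from $X_0^0$ to the apex $X_0^n$, followed by the right principal vertical zigzag, traversed downward from $X_0^n$ to $X_n^n$ — both of which are subquotients by Lemma~\ref{vertical subquotients}. Since collapsibility is a condition on the individual left-pointing arrows, and the concatenation introduces no new arrows, the whole zigzag is collapsible exactly when each of the two pieces is collapsible. Thus inducing a homomorphism is equivalent to the simultaneous collapsibility of the upward left-edge piece and the downward right-edge piece.

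Next I would match each piece to one of the two chasing conditions. Chasing along the given zigzag coincides with chasing along the base of the pyramid (via the embedding--projection decompositions built into the triangles), and since horizontal chasing is independent of the chosen path, chasing the trivial subgroup forward along the given zigzag equals chasing it up the left edge and then down the right edge. Chasing the trivial subgroup up the left edge yields the trivial subgroup at the apex by Lemma~\ref{vertical chasing}, so the forward chase reaches $X_n^n$ trivially if and only if chasing the trivial subgroup down the right edge yields the trivial subgroup; by Lemma~\ref{vertical collapsible} this happens precisely when the downward right-edge piece is collapsible. Dually, chasing the largest subgroup backward along the given zigzag equals chasing it up the right edge — landing at the apex as the largest subgroup, again by Lemma~\ref{vertical chasing} — and then down the left edge, and by Lemma~\ref{vertical collapsible} this returns the largest subgroup of $X_0^0$ exactly when the upward left-edge piece (the left principal vertical zigzag) is collapsible. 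Combining these two equivalences with the reduction of the first paragraph yields the stated necessary and sufficient condition.

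For the \emph{moreover} part, uniqueness of the induced homomorphism is exactly Lemma~\ref{unique induced}. To identify its direct and inverse image maps, I would use that for the (now collapsible) principal horizontal zigzag the induced morphism is the composite $f_n^{\circ}\cdots f_1^{\circ}$, whose direct image map is the composite of the corresponding direct image maps, where each left-pointing isomorphism contributes its inverse image map by property (GF); this composite is by definition the forward chase, and by path-independence it agrees with the forward chase along the given zigzag, the inverse image map being handled dually by the backward chase. The main obstacle I anticipate is the bookkeeping of directions: verifying that the two halves of the principal horizontal zigzag carry exactly the two chasing conditions rather than their duals, and in particular keeping track that chasing trivial or largest subgroups \emph{upward} succeeds unconditionally, so that only the two downward passages impose genuine constraints.
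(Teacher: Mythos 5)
Your proposal is correct and takes essentially the same route as the paper's own proof: both reduce the problem, via path-independence of horizontal chasing, to the principal horizontal zigzag split into the left and right principal vertical zigzags, use Lemma~\ref{vertical chasing} for the unconditional upward chases, Lemma~\ref{vertical collapsible} (equivalently, Lemma~\ref{collapsible subquotient} and its dual) for collapsibility of the two downward passages, and Lemma~\ref{unique induced} together with commutativity and (GF) for uniqueness and the identification of the direct and inverse image maps. The only cosmetic difference is that you spell out the biconditional structure explicitly, whereas the paper writes out mainly the sufficiency direction and leaves necessity implicit in the cited iff-lemmas.
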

\begin{proof}
    If we apply commutativity, we may chase subgroups along the principal horizontal zigzag of the pyramid.
    Chasing the trivial subgroup upward along the principal left vertical zigzag results in a trivial subgroup by Lemma~\ref{vertical chasing}. If we then continue by chasing the trivial subgroup downward along the right principal vertical zigzag and obtain a trivial subgroup, then by Lemma~\ref{vertical collapsible}, we deduce that the right principal vertical zigzag is collapsible in the downward direction. Similarly, chasing a largest subgroup upward along a vertical zigzag results in a largest subgroup by Lemma~\ref{vertical chasing}. If we then continue chasing a largest subgroup downward along the left principal zigzag, by the dual of Lemma~\ref{collapsible subquotient}, we obtain that the left principal zigzag is collapsible in the upward direction. Therefore the principal horizontal zigzag is collapsible. Hence, the zigzag induces a morphism and moreover, the induced morphism is unique by Lemma~\ref{unique induced}. Again by commutativity, we may reduce chasing subgroups forward and backwards along the principal horizontal zigzag to chasing along the original zigzag (noting that, for a given morphism, its inverse image coincides with the the direct image of the inverse morphism). \qedhere
\end{proof}


\begin{lemma}\label{induced duality}
    A zigzag induces a homomorphism if and only if the opposite zigzag induces a homomorphism in the dual category. Moreover, the homomorphisms induced in both categories are the same.
\end{lemma}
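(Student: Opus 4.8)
The plan is to reduce everything to the self-dual characterisation provided by the Homomorphism Induction Theorem (Theorem~\ref{HIT}), rather than to argue directly with pyramids (whose up/down orientation is not preserved under horizontal reflection alone). First I would recall that, by the construction of the dual framework summarised in Table~\ref{FigA}, passing to the dual category $\mathcal{C}^{\mathrm{op}}$ interchanges direct and inverse image maps, interchanges the smallest and largest subgroups of each node, and reverses the orientation of every morphism; at the same time, forming the \emph{opposite} zigzag reflects the diagram horizontally, so that the initial and final nodes are swapped. The central bookkeeping step is to combine these operations and check that they cancel at the level of chasing: for each link $f_i$, the horizontal reflection flips its arrowhead and the passage to $\mathcal{C}^{\mathrm{op}}$ flips it back, while the direct/inverse interchange converts the forward-chasing rule for $Z^{\mathrm{op}}$ read in $\mathcal{C}^{\mathrm{op}}$ into the backward-chasing rule for $Z$ read in the original framework. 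I therefore expect to establish the two identities: chasing forward along $Z^{\mathrm{op}}$ in $\mathcal{C}^{\mathrm{op}}$ equals chasing backward along $Z$ in $\mathcal{C}$, and chasing backward along $Z^{\mathrm{op}}$ in $\mathcal{C}^{\mathrm{op}}$ equals chasing forward along $Z$ in $\mathcal{C}$.

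Granting these identities, the first assertion is immediate. Applying Theorem~\ref{HIT} to $Z^{\mathrm{op}}$ in $\mathcal{C}^{\mathrm{op}}$, the requirement that the trivial subgroup of the initial node chase forward to the trivial subgroup of the final node becomes, after the translations trivial $\leftrightarrow$ largest, initial $\leftrightarrow$ final and forward $\leftrightarrow$ backward, exactly the requirement that the largest subgroup of the final node of $Z$ chase backward to the largest subgroup of its initial node; symmetrically, the second requirement for $Z^{\mathrm{op}}$ becomes the first requirement for $Z$. Thus the two necessary-and-sufficient conditions of Theorem~\ref{HIT} for $Z^{\mathrm{op}}$ in $\mathcal{C}^{\mathrm{op}}$ are precisely those for $Z$ in $\mathcal{C}$, merely listed in the opposite order, which yields the equivalence.

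For the final clause I would first use the last sentence of Theorem~\ref{HIT}, which identifies the direct and inverse image maps of the induced morphism with forward and backward chasing: by the identities above, the direct image map of the morphism induced by $Z^{\mathrm{op}}$ in $\mathcal{C}^{\mathrm{op}}$ coincides with the inverse image map of the morphism $g$ induced by $Z$ in $\mathcal{C}$, and vice versa, which is exactly how the image maps of the fixed arrow $g$ transform when $g$ is reread as an arrow of $\mathcal{C}^{\mathrm{op}}$. Since a morphism need not be determined by its direct image map alone, to upgrade this from an equality of image maps to an equality of arrows I would appeal not to the image maps but to the definition of the induced morphism as the composite of genuine morphisms and inverse isomorphisms read off the collapsible principal horizontal zigzag: reading that same collapsible zigzag backwards in $\mathcal{C}^{\mathrm{op}}$ reverses the order of composition and reverses each factor (inverses of isomorphisms being unaffected), and so returns the identical arrow $g$, now viewed in $\mathcal{C}^{\mathrm{op}}$.

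The main obstacle I anticipate is purely the orientation bookkeeping of the central step: one must keep rigorously separate the three direction-reversing operations in play --- horizontal reflection of the zigzag, reversal of morphisms in $\mathcal{C}^{\mathrm{op}}$, and the passage from a forward to a backward chase --- and verify that they compose to the identity on chasing rules, so that the conditions of Theorem~\ref{HIT} match up on the nose and the recovered composite is literally the same arrow.
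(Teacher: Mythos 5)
Your proof of the equivalence itself is correct and is essentially the paper's own argument: the paper likewise derives this part from Theorem~\ref{HIT}, observing that the two chasing conditions for a zigzag $Z$ in $\mathcal{C}$ translate, under the duality dictionary of Table~\ref{FigA}, into the two chasing conditions for the opposite zigzag $Z^{\mathrm{op}}$ in $\mathcal{C}^{\mathrm{op}}$ (smallest/largest subgroups, forward/backward chasing, and initial/final nodes all being interchanged). Your bookkeeping of the three direction-reversing operations is exactly what is needed there.

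The gap is in the final clause. Your computation --- that reading the collapsible principal horizontal zigzag of a pyramid for $Z$ backwards in $\mathcal{C}^{\mathrm{op}}$ reverses the order of composition and each factor, and so returns the arrow $g$ viewed in $\mathcal{C}^{\mathrm{op}}$ --- is correct as a computation, but it does not by itself identify the morphism induced by $Z^{\mathrm{op}}$ in $\mathcal{C}^{\mathrm{op}}$. By Definition~\ref{DefZigzagInduction}, that morphism is the one induced by the principal horizontal zigzag of a pyramid \emph{built from $Z^{\mathrm{op}}$ in $\mathcal{C}^{\mathrm{op}}$}; your backwards-read zigzag is, a priori, just some collapsible zigzag in $\mathcal{C}^{\mathrm{op}}$ from $X_n$ to $X_0$ whose collapse has no stated relation to that definition. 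Since, as you yourself note, morphisms here are not determined by their image maps, there is no pyramid-free characterisation of the induced morphism to fall back on: the uniqueness in Theorem~\ref{HIT} is uniqueness across pyramids (Lemma~\ref{unique induced}), not a characterisation by chasing. The missing step is precisely the one the paper's proof supplies: horizontally reflecting a pyramid for $Z$ and reading it in $\mathcal{C}^{\mathrm{op}}$ yields a pyramid for $Z^{\mathrm{op}}$ in $\mathcal{C}^{\mathrm{op}}$. This works because dualization reverses every arrow while swapping the notions of projection and embedding, so upward arrows remain projections and downward arrows remain embeddings, and each construction step (base triangles from Axiom~\ref{ax4}, projection/embedding diamonds, composite-factorization diamonds) dualizes to a legitimate construction step. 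Once that is in place, your backwards-read zigzag is recognised as the principal horizontal zigzag of this pyramid, and Lemma~\ref{unique induced} gives the desired equality of induced morphisms. Note that your opening reason for avoiding pyramids --- that reflection alone does not respect the up/down conventions --- is exactly what dualization repairs; avoiding pyramids altogether cannot succeed, because the induced morphism is defined through them.
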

\begin{proof}
    It follows from the Homomorphism Induction Theorem that a zigzag induces a morphism if and only if in the opposite category, chasing the largest subgroup of the initial node forward yields the largest subgroup of the final node and chasing the smallest subgroup of the final node backward yields the smallest subgroup of the initial node. This is equivalent to the opposite of the zigzag inducing a homomorphism in the opposite category. When this is the case, if we horizontally reflect a  pyramid, constructed from the original zigzag, and consider it in the opposite category, it becomes a pyramid for the opposite zigzag in the opposite category. It follows that the induced morphisms are the same.
\end{proof}

\begin{theorem}[Universal Isomorphism Theorem, \cite{DNA IV}]\label{UIT}
    If the largest and smallest subgroups are preserved by chasing them from each end node of the zigzag to the opposite end node, then it induces a homomorphism and the induced homomorphism is an isomorphism.
\end{theorem}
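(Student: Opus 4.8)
The plan is to derive everything from the Homomorphism Induction Theorem (Theorem~\ref{HIT}) together with the characterizations of embeddings and projections via kernels and images. First I would unpack the hypothesis into its four constituent chasing conditions. That ``the largest and smallest subgroups are preserved by chasing them from each end node to the opposite end node'' amounts to: (i) chasing the trivial subgroup of the initial node forward yields the trivial subgroup of the final node; (ii) chasing the largest subgroup of the final node backward yields the largest subgroup of the initial node; (iii) chasing the largest subgroup of the initial node forward yields the largest subgroup of the final node; and (iv) chasing the trivial subgroup of the final node backward yields the trivial subgroup of the initial node. Conditions (i) and (ii) are precisely the necessary and sufficient conditions of Theorem~\ref{HIT}, so the zigzag induces a unique homomorphism, which I will call $\varphi\colon X_0\to X_n$. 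Crucially, Theorem~\ref{HIT} also tells us that the direct and inverse image maps of $\varphi$ coincide with forward and backward chasing along the zigzag, respectively.

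Next I would compute the kernel and image of $\varphi$ using the two remaining conditions. By definition, $\Ker\varphi=\varphi^{-1}1$ is the inverse image of the trivial subgroup of the codomain; since the inverse image map of $\varphi$ is backward chasing, condition (iv) gives $\Ker\varphi=1$, so by Lemma~\ref{kerembedding} the morphism $\varphi$ is an embedding. Dually, $\im\varphi=\varphi X_0$ is the direct image of the largest subgroup of the domain, and the direct image map of $\varphi$ is forward chasing; hence condition (iii) gives that $\im\varphi$ is the largest subgroup of the codomain, so by Lemma~\ref{improjection} the morphism $\varphi$ is a projection.

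Finally, a morphism that is simultaneously an embedding and a projection is an isomorphism by Lemma~\ref{classificationiso}, which completes the argument.

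The proof is short because the real work has already been invested in the pyramid machinery and in Theorem~\ref{HIT}. I expect the only genuine pitfall to be bookkeeping the four chasing directions correctly --- keeping straight that the ``forward-trivial'' and ``backward-largest'' conditions guarantee \emph{existence} of $\varphi$, while the ``forward-largest'' and ``backward-trivial'' conditions are exactly what identify $\im\varphi$ as the top and $\Ker\varphi$ as the bottom, forcing $\varphi$ to be an isomorphism. A pleasant structural observation is that conditions (iii) and (iv) are precisely the hypotheses of Theorem~\ref{HIT} applied to the \emph{opposite} zigzag, so one could alternatively invoke Lemma~\ref{induced duality} to obtain an induced morphism for the opposite zigzag and argue, in the spirit of Lemma~\ref{zigzag inverse}, that it is a two-sided inverse of $\varphi$; the direct kernel/image computation above is cleaner, however, since it avoids verifying the inverse relation explicitly.
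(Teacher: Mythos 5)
Your proof is correct and takes essentially the same route as the paper's: both apply Theorem~\ref{HIT} to obtain the induced morphism and then use the two remaining chasing conditions to conclude that it is injective and surjective, hence an isomorphism. Your version merely spells out the kernel/image computations and the supporting citations (Lemmas~\ref{kerembedding}, \ref{improjection} and~\ref{classificationiso}) that the paper's two-line proof leaves implicit.
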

\begin{proof}
    If we apply the Homomorphism Induction Theorem, Theorem~\ref{HIT}, we obtain that the zigzag induces a morphism that is both injective and surjective. Therefore the zigzag induces an isomorphism. \qedhere
\end{proof}

\begin{remark}
    If a zigzag satisfies the assumptions of the Universal Isomorphism Theorem, then so does the opposite zigzag. Furthermore, it follows from Lemma~\ref{zigzag inverse}, that the isomorphisms induced by the zigzag and its opposite are inverses of each other.
\end{remark}

The following is an application of the Universal Isomorphism Theorem.

\begin{theorem}[\cite{DNA IV}]\label{quotientiso}
    Consider any morphism $f\colon A\rightarrow B$, and subgroups   $\Ker f\subseteq W\subseteq X$
    of $A$, where $X$ is conormal. Then $W\lhd X$ if and only if $fW\lhd fX$, and when this is the case, there is an isomorphism $X/W\approx fX/fW$.
\end{theorem}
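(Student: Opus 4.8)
The plan is to factor everything through a single projection induced by $f$ between the two subobjects $X/1$ and $fX/1$, reducing both halves of the statement to the behaviour of normal subgroups under that projection.

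Set $p=\iota_X^{-1}W\in\mathsf{Sub}(X/1)$ and $q=\iota_{fX}^{-1}(fW)\in\mathsf{Sub}(fX/1)$. Note first that $fX=\im(f\iota_X)$ is a conormal subgroup (being an image, using that $X$ is conormal), so $\iota_{fX}$ is defined; together with $fW\subseteq fX$ (monotonicity) this shows that (RN1) and (RN2) hold for $fW\lhd fX$ automatically, just as they are given for $W\lhd X$. Hence $W\lhd X$ is exactly the assertion that $p$ is normal in $X/1$, and $fW\lhd fX$ is exactly the assertion that $q$ is normal in $fX/1$. Since $\im(f\iota_X)=fX$, the universal property of the embedding $\iota_{fX}$ produces a unique morphism $\bar f\colon X/1\to fX/1$ with $\iota_{fX}\bar f=f\iota_X$.

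The technical core, and the step I expect to be the main obstacle, is to determine $\bar f$ precisely. I would establish three facts. First, $\bar f$ is a projection: from $\iota_{fX}(\im\bar f)=\im(\iota_{fX}\bar f)=\im(f\iota_X)=fX=\im\iota_{fX}$ and the injectivity of the direct image map of $\iota_{fX}$ (Lemma~\ref{inj}) we conclude that $\im\bar f$ is the largest subgroup of $fX/1$, so $\bar f$ is a projection. Second, since $\iota_{fX}$ has trivial kernel, $\Ker\bar f=(f\iota_X)^{-1}1=\iota_X^{-1}(\Ker f)$, and because $\Ker f\subseteq W$ this gives the crucial inclusion $\Ker\bar f\subseteq\iota_X^{-1}W=p$; this is the only place where the hypothesis $\Ker f\subseteq W$ enters. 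Third, $\bar f(p)=q$: indeed $\iota_{fX}\bar f(p)=f\iota_X\iota_X^{-1}W=f(W\wedge X)=fW$ (using Axiom~\ref{ax2} and $\im\iota_X=X$ from Lemma~\ref{imembeddding}), and applying $\iota_{fX}^{-1}$, which cancels $\iota_{fX}$ since the latter is injective, yields $\bar f(p)=\iota_{fX}^{-1}(fW)=q$; combined with the second fact and Axiom~\ref{ax2}, $\bar f^{-1}(q)=p\vee\Ker\bar f=p$.

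Part~(a) now follows at once. If $p$ is normal in $X/1$, then $q=\bar f(p)$ is normal in $fX/1$ because $\bar f$ is a projection and normal subgroups are stable under direct images along projections (Lemma~\ref{direct image normal}); conversely, if $q=\Ker g$ is normal, then $p=\bar f^{-1}(q)=\Ker(g\bar f)$ is normal. Hence $W\lhd X\Leftrightarrow fW\lhd fX$.

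Finally, assume both hold, so that $\pi_p\colon X/1\to X/W$ and $\pi_q\colon fX/1\to fX/fW$ are defined. To produce the isomorphism I would apply the Universal Isomorphism Theorem (Theorem~\ref{UIT}) to the zigzag
$$\xymatrix{X/W & X/1 \ar[l]_{\pi_p}\ar[r]^{\bar f} & fX/1 \ar[r]^{\pi_q} & fX/fW.}$$
Chasing the trivial subgroup forward gives $\pi_q\bar f(\pi_p^{-1}1)=\pi_q\bar f(p)=\pi_q(q)=1$, and chasing it backward gives $\pi_p(\bar f^{-1}\pi_q^{-1}1)=\pi_p(\bar f^{-1}q)=\pi_p(p)=1$, by the computations above. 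Chasing the largest subgroup in either direction returns the largest subgroup, since each of $\pi_p,\bar f,\pi_q$ is a projection (so its direct image map preserves the largest subgroup) and every inverse image map preserves the largest subgroup by (G4). The four conditions of Theorem~\ref{UIT} are therefore met, and the zigzag induces an isomorphism $X/W\approx fX/fW$. Alternatively, one may bypass the zigzag: $\pi_q\bar f$ is a projection with kernel $\bar f^{-1}q=p$, so Lemma~\ref{classificationproj}, applied to $\pi_q\bar f$ and $\pi_p$, yields the isomorphism directly.
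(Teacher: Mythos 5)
Your proof is correct: every step checks against the axioms and lemmas you cite, and it shares the paper's overall skeleton --- both arguments reduce the statement to comparing normality of $p=\iota_X^{-1}W$ in $X/1$ with normality of $q=\iota_{fX}^{-1}(fW)$ in $fX/1$, linked by a projection $X/1\to fX/1$, and both prove the forward implication with Lemma~\ref{direct image normal}. You diverge from the paper in three genuine ways, though. First, the paper constructs the connecting projection by factorizing $f\iota_X$ through Axiom~\ref{ax4} as $\iota_{fX}h\pi_{\Ker(f\iota_X)}$ with $h$ an isomorphism, whereas you obtain $\bar f$ from the universal property of $\iota_{fX}$ and then prove it is a projection via Lemma~\ref{inj}; since $\iota_{fX}$ is a monomorphism these are the same morphism, but your identities $\bar f(p)=q$ and $\bar f^{-1}(q)=p$ replace the paper's longer equational chain establishing $h\pi_{\Ker(f\iota_X)}\iota_X^{-1}W=\iota_{fX}^{-1}fW$. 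Second, for the converse implication the paper goes back out to $f$ itself and invokes Lemma~\ref{inverse im normal} (using $f^{-1}fW=W$ and $f^{-1}fX=X$, which is where $\Ker f\subseteq W\subseteq X$ enters there), while you stay inside the quotients and note simply that $\bar f^{-1}\Ker g=\Ker(g\bar f)$; this is more elementary and avoids the conormality hypothesis that Lemma~\ref{inverse im normal} carries. Third, the paper exhibits the isomorphism as induced by the six-node zigzag $X/W\leftarrow X/1\rightarrow A\rightarrow B\leftarrow fX/1\rightarrow fX/fW$, leaving the chasing conditions of Theorem~\ref{UIT} unverified, whereas you either check those conditions explicitly on a shorter zigzag or bypass homomorphism induction altogether by observing that $\pi_q\bar f$ and $\pi_p$ are projections with the same kernel $p$ and applying Lemma~\ref{classificationproj}. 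Your last variant is the most economical route to the bare isomorphism; what the paper's formulation buys is an isomorphism exhibited as induced by a zigzag passing through the original groups $A$ and $B$, which is the form that interacts well with duality (Lemma~\ref{induced duality}) and with computing kernels and images by chasing, as exploited in the proof of the Snake Lemma.
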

\begin{proof}
    First note that since $X$ is conormal, the embedding $\iota_{X}$ exists and moreover, $\iota_{X} X/1=X$. Therefore $fX=\im(f\iota_X)$. We then obtain the commutative diagram
    $$\xymatrix{X/1 \ar[rr]^{\iota_X} \ar[d]^{\pi_{\Ker(f\iota_X)}}&&A\ar[r]^f&B\\
        (X/1)/\Ker (f\iota_X) \ar[rrr]^h&&&fX/1. \ar[u]^{\iota_{fX}}}$$
    The factorization of the morphism $f\iota_X$ is obtained by Axiom~\ref{ax4}; note that the morphism $h$ is an isomorphism. If $W\lhd X$, then $\iota_{X}^{-1}W$ is a normal subgroup of $X/1$. Furthermore, $h\pi_{\Ker(f\iota_X)} \iota_X^{-1} W$ is a normal subgroup of $fX/1$ by Lemma~\ref{direct image normal}. However
    \begin{align*}
        h\pi_{\Ker(f\iota_X)} \iota_X^{-1}W & = h\pi_{\Ker(f\iota_X)} \iota_X^{-1} (W\vee \Ker f)                      &  & \text{[$\Ker f\subseteq W$]}                   \\
                                            & =h\pi_{\Ker(f\iota_X)} \iota_X^{-1} f^{-1}fW                                                                                 \\
                                            & =h\pi_{\Ker(f\iota_X)} \pi_{\Ker (f\iota_X)}^{-1}h^{-1}\iota_{fX}^{-1}fW                                                     \\
                                            & =h(h^{-1} \iota_{fX}^{-1}fW  \wedge \im \pi_{\Ker (f\iota_X)})                                                               \\
                                            & =hh^{-1}\iota_{fX}^{-1}fW                                                &  & \text{[$\pi_{\Ker (f\iota_X)}$ is surjective]} \\
                                            & =\iota_{fX}^{-1}W.
    \end{align*}
    Therefore $fW\lhd fX$. Conversely if $fW\lhd fX$, then $\iota_{fX}^{-1}fW$ is a normal subgroup of $fX/1$. Moreover we have that $$W=W\vee \Ker f =f^{-1}fW \lhd f^{-1}fX=X\vee \Ker f=X,$$
    where the normality is obtained by Lemma~\ref{inverse im normal}.

    Moreover, whenever these equivalent cases occur, the desired isomorphism between $X/W$ and $fX/fW$ is induced by the zigzag
    \[
        X/W\xleftarrow{\pi_{\iota_{X}^{-1}}W} X/1 \xrightarrow{\iota_X} A\xrightarrow{f}B\xleftarrow{\iota_{fX}} fX/1\xrightarrow{\pi_{\iota_{fX}^{-1}}fW} fX/fW.\qedhere
    \]
\end{proof}

Lastly, we generalize the result in \cite{DNA IV}, which gives a necessary and sufficient condition, based on the relations induced by morphisms, for when a zigzag of homomorphisms of groups (in the ordinary sense) induces a morphism. In the more general result presented below, ordinary groups are replaced with Słomiński algebras.

Consider a morphism $f\colon X\rightarrow Y$. Then $f$ defines a relation $\bar{f}$ over the sets $X$ and $Y$ given by $x\bar{f}y \Leftrightarrow f(x)=y$. Furthermore, the opposite relation $\bar{f}^{\mathsf{op}}$ is given by $y\bar{f}^{\mathsf{op}} x \Leftrightarrow f(x)=y$.

Consider a zigzag
$$\xymatrix{X_0 \ar@{-}[r]^{f_1} & X_1 \ar@{-}[r]^{f_2}&X_2\ar@{-}[r]^{f_3} &X_3 \ar@{-}[r]^{f_4}&X_4\ar@{.}[r]&X_{n-1}\ar@{-}[r]^{f_n}&X_n.}$$
We write $f_i^\bullet=\bar{f_i}$ if the arrowhead in the $i$-th place in the zigzag appears on the right and $f_i^\bullet=\bar{f_i}^{\mathsf{op}}$ if the arrowhead in the $i$-th place in the zigzag appears on the left.
The \emph{induced relation} of the zigzag is the relation given by the composite $f_n^\bullet \cdots f_4^\bullet f_3^\bullet f_2^\bullet f_1^\bullet$ of relations $f_i^\bullet$.

The proof of the following theorem is adapted from the proof of the corresponding result for groups in \cite{DNA IV}. Although this falls outside the scope of the present paper, it should be possible to generalize this result to arbitrary semi-abelian categories.

\begin{theorem} \label{induced relation}
    A zigzag of Słomiński algebra morphisms induces a morphism if and only if the induced relation is a function. Moreover, when this is the case, the function is precisely the induced morphism.
\end{theorem}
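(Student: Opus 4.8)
The plan is to reduce everything to the Homomorphism Induction Theorem (Theorem~\ref{HIT}) by translating forward and backward chasing into relational language. The first step is to set up the dictionary: for a Słomiński homomorphism $f\colon X\to Y$ and a subalgebra $S$, the relational image $\bar f[S]=\{f(x)\mid x\in S\}$ is the direct image $fS$, while $\bar f^{\mathsf{op}}[S]=\{x\mid f(x)\in S\}$ is the inverse image $f^{-1}S$. Since relational image respects composition of relations, chasing a subalgebra $S$ of $X_0$ forward along the zigzag computes exactly $R[S]$, where $R=f_n^\bullet\cdots f_1^\bullet$ is the induced relation, and chasing a subalgebra of $X_n$ backward computes the image under $R^{\mathsf{op}}$. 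I would also record that each $\bar f_i$ and each $\bar f_i^{\mathsf{op}}$ is a subalgebra of the relevant product (it is the image of the homomorphism $x\mapsto(x,f_i(x))$, resp.\ $x\mapsto(f_i(x),x)$) and that a relational composite of subalgebra-relations is again a subalgebra; hence $R$ is a subalgebra of $X_0\times X_n$ containing $(0,0)$.

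With this dictionary in place, Theorem~\ref{HIT} states that the zigzag induces a morphism precisely when $R[\{0\}]=\{0\}$ and $R^{\mathsf{op}}[X_n]=X_0$. The second condition unwinds to $\{x\in X_0\mid \exists\, y,\ (x,y)\in R\}=X_0$, which is exactly totality of $R$. For the first condition I would show it is equivalent to single-valuedness: if $(x,y_1),(x,y_2)\in R$, then, $R$ being a subalgebra, $d\big((x,y_1),(x,y_2)\big)=(0,d(y_1,y_2))\in R$, so $d(y_1,y_2)\in R[\{0\}]=\{0\}$ and hence $y_1=y_2$ by Lemma~\ref{xeqy}; conversely single-valuedness together with $(0,0)\in R$ forces $R[\{0\}]=\{0\}$. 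Thus the zigzag induces a morphism if and only if $R$ is total and single-valued, i.e.\ a function. This settles the equivalence, and I expect it to be essentially routine.

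The subtle clause is the last one: when $R$ is a function $g$, it must \emph{be} the induced morphism $\phi$. The obstacle is that Theorem~\ref{HIT} characterizes $\phi$ only through its action on subalgebras, and the direct-image map on subalgebras does not determine a homomorphism element-wise --- already on $\mathbb{Z}$ the identity and $x\mapsto -x$ act identically on subgroups. To pin down $\phi$ element-wise I would compare $R$ with the relation read off the principal horizontal zigzag of the pyramid. Decomposing each $f_i$ as an embedding after a projection replaces $\bar f_i$ (or $\bar f_i^{\mathsf{op}}$) by the corresponding relational composite, so $R$ equals the relation of the bottom horizontal path through the triangle apexes; and along a collapsible horizontal zigzag the relational composite of graphs --- with each left-pointing isomorphism contributing the graph of its inverse --- is exactly the graph of the induced morphism $\phi$. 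It therefore remains to upgrade the subalgebra-level commutativity of the pyramid's diamonds (Lemma~\ref{proj diamond} and its dual) to \emph{relational} commutativity of the top and bottom wedges.

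For a projection diamond with $p=xn=yr$ this amounts to the equality of relations $\{(a,b)\mid x(a)=y(b)\}=\{(a,b)\mid \exists\, g,\ n(g)=a,\ r(g)=b\}$, that is, surjectivity of $(n,r)$ onto the pullback of $x$ and $y$. Here I would invoke the concrete structure: the term $m(x,y,z)=p(d(x,y),z)$ is a Mal'cev operation on every Słomiński algebra, since $m(x,y,y)=p(d(x,y),y)=x$ and $m(x,x,z)=p(0,z)=p(d(z,z),z)=z$. Hence Słomiński algebras form a Mal'cev (indeed semi-abelian) variety, congruences permute, and the required surjectivity follows. The embedding-diamond case is dual, and the two mixed diamonds are immediate since their top wedge is by construction the decomposition of the bottom composite, so graph composition gives equality directly. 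Granting relational commutativity, $R$ coincides with the relation of the principal horizontal zigzag, which is the graph of $\phi$; therefore $g=\phi$. I expect this relational upgrade of the diamond lemmas --- equivalently, the permutability input --- to be the main point of the proof.
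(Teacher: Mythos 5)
Your proposal is correct, but it divides the work differently from the paper and proves the key steps by partly different means, so a comparison is worthwhile. For the equivalence itself, the paper stays inside the pyramid throughout: it first proves relational commutativity of the diamonds, concludes that the relation induced by the zigzag equals that of the principal horizontal zigzag, and then handles the direction ``induced relation is a function $\Rightarrow$ zigzag induces a morphism'' by showing, one arrow at a time, that the embeddings along the principal vertical zigzags are surjective. You instead get the whole equivalence in one stroke from Theorem~\ref{HIT}: your observation that the composite relation $R$ is itself a subalgebra of $X_0\times X_n$, so that ``$\{0\}$ chases forward to $\{0\}$'' is literally single-valuedness of $R$ (via $d\big((x,y_1),(x,y_2)\big)=(0,d(y_1,y_2))$ and Lemma~\ref{xeqy}), while ``$X_n$ chases backward to $X_0$'' is literally totality, does not appear in the paper and is cleaner; it isolates exactly why the two chasing conditions of Theorem~\ref{HIT} amount to functionality, and it needs no collapsibility analysis at all. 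For the ``moreover'' clause you rejoin the paper's route --- both proofs reduce to relational commutativity of the pyramid's diamonds, with the mixed diamonds immediate and the embedding diamonds handled by duality --- and you differ only at the projection diamond: the paper constructs the required element of $G$ explicitly (it proves $\Ker y=r\Ker n$ by subalgebra chasing and checks that $e=p(d,c)$ maps onto the given pair), whereas you invoke the Mal'cev term $p(d(x,y),z)$ and congruence permutability. These are the same computation in different clothing --- the paper's element $e$ is exactly the Mal'cev combination --- but the paper's version is self-contained within its framework, while yours buys brevity at the cost of importing universal-algebraic background.

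One step in your permutability argument does need to be spelled out. The morphism $p$ in Lemma~\ref{proj diamond} is the projection associated to the \emph{subalgebra} join $N\vee R$, so to conclude that a pair $(c,c')$ with $p(c)=p(c')$ lies in $\theta_n\circ\theta_r$ you must identify the kernel congruence of $p$ with the \emph{congruence} join $\theta_n\vee\theta_r$. That identification is not purely formal: it uses the fact that congruences of Słomiński algebras are determined by their $0$-classes (immediate from $x\,\theta\,y\Leftrightarrow d(x,y)\,\theta\,0$, which follows from the two defining identities), so that there is only one congruence whose $0$-class is $N\vee R$, and it must be $\theta_n\vee\theta_r$. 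With that line added your argument is complete; without it, ``congruences permute, and the required surjectivity follows'' leaves unbuilt the bridge between the noetherian-form construction of $p$ and the congruence lattice. This is precisely the gap that the paper's explicit computation of $\Ker y=r\Ker n$ closes by hand.
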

\begin{proof}
    We first show that any two horizontal zigzags between two nodes in the pyramid induce the same relation. For horizontal zigzags consisting of arrows staying in the base of the pyramid, the result is clear. It then suffices to show that for each diamond in the pyramid, the relation induced by the top wedge is the same as the relation induced by the bottom wedge. We prove this property for projection diamonds only, since the property for embedding diamonds follows by duality and the remaining two cases are clear. Consider a projection diamond
    \[
        \xymatrix{&Z&\\
            X\ar[ur]^x&&Y\ar[ul]_y\\
            &G .\ar[uu]^p \ar[ul]^n \ar[ur]_r&}
    \]
    Then it follows, by commutativity, that the relation induced by the bottom wedge is smaller than the one induced by the top wedge. To show that the relation induced by the bottom wedge is bigger than the relation induced by the top wedge, consider elements $a\in X$ and $b\in Y$ such that $x(a)=y(b)$. Then there must exist $c\in G$ such that $n(c)=a$ since $n$ is surjective. Now the element $d(b, r(c))$ must belong to $\Ker y$ since
    \[
        y(d(b,r(c)))=d(y(b), yr(c))=d(y(b), xn(c))=d(y(b),x(a))=0.
    \]
    Moreover,
    \begin{align*}
         & \Ker n\vee \Ker r=\Ker p =\Ker (yr)=r^{-1}\Ker y     \\
         & \Rightarrow  r(\Ker n \vee \Ker r)=rr^{-1}\Ker y     \\
         & \Rightarrow r\Ker n \vee r\Ker r=\Ker y \wedge \im r \\
         & \Rightarrow \Ker y= r\Ker n.
    \end{align*}
    Therefore there exists $d\in \Ker n$ such that $r(d)=d(b,r(c))$. Let $e=p(d,c)$. Then
    \[
        n(e)=n(p(d,c))=p(n(d),n(c))=p(0,a)=a
    \]
    and
    \[
        r(e)=r(p(d,c))=p(r(d),r(c))=p(d(b,r(c)),r(c))=b.
    \]
    Therefore, the relation induced by the bottom wedge is indeed bigger than the relation induced by the top wedge and hence we may conclude that the induced relations are the same. Thus we have shown that any two horizontal zigzags between two nodes in the pyramid induce the same relation. This means that the relation induced by a zigzag is the same as the relation induced by the corresponding principal horizontal zigzag. If the principal horizontal zigzag is collapsible, then it is clear that the induced relation is the induced morphism. Conversely, suppose that the induced relation is a function. We will show that the principal horizontal zigzag is collapsible. Consider the first left-pointing arrow (which is an embedding) in the left principal vertical zigzag. Then any element in its codomain must be related to an element in its domain since the previous morphisms in the left principal vertical zigzag are surjections and since the element must be related to an element of $X_n^n$. Therefore the arrow is surjective and is hence an isomorphism. A similar argument shows that all left pointing arrows in the zigzag connecting $X_n^n$ and $X_0^n$ are isomorphisms.
\end{proof}

\section{Classical Diagram Lemmas} \label{diaglem}
In this section, we state and prove the classical diagram lemmas of homological algebra (found, e.g., in \cite{Homology,Cftwm}) in the self-dual axiomatic context described in Section \ref{framework}. The diagram lemmas will be formulated in terms of exact sequences and injective and surjective morphisms, where in this context injectivity and surjectivity refer to the injectivity and surjectivity of the direct image maps. Notice that injective morphisms are precisely embeddings and dually, surjective morphisms are precisely projections by Lemma~\ref{inj}. Most of these lemmas can be broken up into two dual halves, which, thanks to the axiomatic context being invariant under duality, allows one to establish the lemma by proving its one half only.

In our proofs we use an equational (algebraic) method of chasing subgroups, following the method found in \cite{Grandis}. With certain auxiliary lemmas, it is possible to develop a more geometric diagram chasing method, similar to the one used in e.g., \cite{Homology, Cftwm, Spider, diagram chasing, chasing subgroups} (see, in particular, Remark~2.8 in \cite{diagram chasing}). In the geometric approach, in a single diagram chase one only makes use of either the direct image maps, or the inverse image maps, and bottom and top subgroups respectively, but otherwise does not make use of the order of subgroup lattices (i.e., the subgroup lattices are viewed as merely pointed sets). However, it is not known whether the geometric approach can give rise to criteria for homomorphism induction for group-like structures that would be invariant under the duality of the corresponding noetherian form (note that the criteria in our Homomorphism Induction Theorem makes use of direct and inverse image maps simultaneously). Therefore, in this paper, we leave out entirely the geometric approach to diagram chasing.

We will use an arrow of the form
$\xymatrix{X\ar@{{ >}->}[r] &Y}$
to represent an injective morphism in a diagram, and we will use $\xymatrix{X\ar@{->>}[r] &Y}$ to represent a surjective morphism.

\begin{definition}
    A sequence of groups and morphisms, $G\xrightarrow{f} H \xrightarrow{g} I$, is said to be \textit{exact} at $H$ if $\mathsf{Im} f=\mathsf{Ker} g$.
\end{definition}
\begin{definition}
    A sequence of groups and morphisms, $X_1\rightarrow X_2\rightarrow \dots \rightarrow X_{n-1} \rightarrow X_n $, is \textit{exact} if it is exact at each node in the sequence except the end nodes $X_1$ and $X_n$.
\end{definition}

According to Mac~Lane~\cite{Homology}, the following diagram lemma was originally formulated by J.~Leicht. In~\cite{Homology} it was stated and proved for the context of modules.

\begin{theorem}[Four Lemma]
    Consider a commutative diagram
    \begin{center}
        $\xymatrix{A\ar[r]^f \ar@{->>}[d]^s &B \ar[r]^g \ar[d]^t &C \ar[r]^h \ar[d]^u &D\ar@{{ >}->}[d]^v \\
                A' \ar[r]^x & B' \ar[r]^y &C' \ar[r]^z &D',}$
    \end{center}
    where the rows are exact sequences. Then:\begin{enumerate}
        \item $g(\mathsf{Ker }t)=\mathsf{Ker }u$,
        \item $y^{-1}(\mathsf{Im }u)=\mathsf{Im }t$.
    \end{enumerate}
    In particular, if $t$ is injective, then so is $u$, and when $u$ is surjective, so is $t$ (Weak Four Lemma).
\end{theorem}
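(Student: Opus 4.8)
The plan is to establish the two equalities (1) and (2) and then read the Weak Four Lemma off them. The first thing I would point out is that (2) is precisely the dual of (1), so that the self-dual framework lets me prove only one of them. Indeed, dualising $g(\mathsf{Ker}\,t)=\mathsf{Ker}\,u$ via Table~\ref{FigA} (direct image $\leftrightarrow$ inverse image, $\mathsf{Ker}\leftrightarrow\mathsf{Im}$) turns it into $g^{-1}(\mathsf{Im}\,t)=\mathsf{Im}\,u$, and relabelling the diagram by the $180^{\circ}$ rotation that swaps $f\leftrightarrow z$, $g\leftrightarrow y$, $h\leftrightarrow x$, $s\leftrightarrow v$, $t\leftrightarrow u$ converts this into exactly $y^{-1}(\mathsf{Im}\,u)=\mathsf{Im}\,t$. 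This same relabelling exchanges ``$s$ surjective'' with ``$v$ injective'' and carries the pair of exact rows to itself, so the whole hypothesis is self-dual. Hence I would prove (1) in full and obtain (2) for free.

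For (1) I would first record the data: the commutativities $tf=xs$, $ug=yt$, $vh=zu$; the four exactness relations $\mathsf{Im}\,f=\mathsf{Ker}\,g$, $\mathsf{Im}\,g=\mathsf{Ker}\,h$, $\mathsf{Im}\,x=\mathsf{Ker}\,y$, $\mathsf{Im}\,y=\mathsf{Ker}\,z$; and the facts $\mathsf{Im}\,s=A'$ and $\mathsf{Ker}\,v=1$. The inclusion $g(\mathsf{Ker}\,t)\subseteq\mathsf{Ker}\,u$ is the cheap half: applying $u$ and using $ug=yt$ gives $ug(\mathsf{Ker}\,t)=yt(\mathsf{Ker}\,t)=y(tt^{-1}1)=y1=1$ by Axiom~\ref{ax2} and (G4), so $g(\mathsf{Ker}\,t)\subseteq u^{-1}1=\mathsf{Ker}\,u$ by the Galois connection (G).

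The substantial half is $\mathsf{Ker}\,u\subseteq g(\mathsf{Ker}\,t)$, carried out as a subgroup chase in the equational style of \cite{Grandis}. Injectivity of $v$ with $vh=zu$ forces $h(\mathsf{Ker}\,u)=1$, hence $\mathsf{Ker}\,u\subseteq\mathsf{Ker}\,h=\mathsf{Im}\,g$ by exactness at $C$; by Axiom~\ref{ax2} this gives $g(g^{-1}\mathsf{Ker}\,u)=\mathsf{Ker}\,u$. Writing $P:=g^{-1}\mathsf{Ker}\,u$, I would then show $tP\subseteq\mathsf{Im}\,x$: since $ytP=ugP=u(\mathsf{Ker}\,u)=1$, exactness at $B'$ gives $tP\subseteq\mathsf{Ker}\,y=\mathsf{Im}\,x$. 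Surjectivity of $s$ now enters through $\mathsf{Im}\,x=xA'=xsA=tfA$ (using $\mathsf{Im}\,s=A'$ and $tf=xs$), so $tP\subseteq t(fA)$, whence $P\subseteq t^{-1}t(fA)=fA\vee\mathsf{Ker}\,t$ by monotonicity and Axiom~\ref{ax2}. Applying $g$, using that direct images preserve joins (G3) and that $g(fA)=g(\mathsf{Im}\,f)=g(\mathsf{Ker}\,g)=1$ by exactness at $B$, I obtain $\mathsf{Ker}\,u=gP\subseteq g(fA)\vee g(\mathsf{Ker}\,t)=g(\mathsf{Ker}\,t)$, as desired.

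The main obstacle is precisely this last chase: the element-level idea (lift $c\in\mathsf{Ker}\,u$ to some $b\in B$, then correct $b$ by an element of $\mathsf{Im}\,f$ so that it falls into $\mathsf{Ker}\,t$) must be executed entirely at the level of subgroups, with every move — in particular passing from $tP\subseteq t(fA)$ to $P\subseteq fA\vee\mathsf{Ker}\,t$, and annihilating $fA$ under $g$ — justified by the Galois connection and the two identities of Axiom~\ref{ax2} rather than by manipulating elements. Once (1) is proved and (2) follows by duality, the Weak Four Lemma is immediate: if $t$ is injective then $\mathsf{Ker}\,t=1$, so $\mathsf{Ker}\,u=g(1)=1$ by (1) and $u$ is injective; if $u$ is surjective then $\mathsf{Im}\,u=C'$, so $\mathsf{Im}\,t=y^{-1}C'=B'$ by (2) and (G4), and $t$ is surjective.
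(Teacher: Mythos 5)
Your proposal is correct and follows essentially the same route as the paper: the identical subgroup chase (injectivity of $v$ forcing $\Ker u \subseteq \Ker h = \im g$, commutativity giving $g^{-1}\Ker u = t^{-1}\Ker y$, exactness of the bottom row plus surjectivity of $s$ rewriting $\Ker y = \im x = tfA$, then Axiom~\ref{ax2}, (G3) and exactness at $B$), with part (2) obtained by duality exactly as in the paper. The only difference is presentational: the paper runs the chase as a single chain of equalities computing $\Ker u = g\Ker t$ outright, whereas you split it into two inclusions, so your easy inclusion $g(\Ker t)\subseteq \Ker u$ is a separate step that the paper's equational formulation renders unnecessary.
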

\begin{proof}
    (i) holds because
    \begin{align*}
        \Ker u & = u^{-1} 1 \wedge u^{-1}z^{-1} 1 & \text{[$u^{-1} 1\subseteq u^{-1} z^{-1} 1$]} \\
               & =u^{-1} 1\wedge h^{-1} v^{-1}1   & \text{[by commutativity]}                    \\
               & = u^{-1} 1\wedge h^{-1} 1        & \text{[$v$ is injective]}                    \\
               & = u^{-1} 1\wedge \im g           & \text{[by exactness of the top row]}         \\
               & = gg^{-1} u^{-1}1                                                               \\
               & = gt^{-1}y^{-1}1                 & \text{[by commutativity]}                    \\
               & =gt^{-1} xA'                     & \text{[by exactness of the bottom row]}      \\
               & = gt^{-1} xsA                    & \text{[$s$ is surjective]}                   \\
               & = gt^{-1} tfA                    & \text{[by commutativity]}                    \\
               & = g(fA \vee \Ker t)                                                             \\
               & = gfA \vee g\Ker t                                                              \\
               & =g\Ker t.                        & \text{[by exactness of the top row]}.
    \end{align*}
    Statement (ii) follows from (i) by duality.
\end{proof}

The following classical diagram lemma goes back to \cite{Cartan-Eilenberg,Buc,Homology}.


\begin{theorem}[Five Lemma]\label{5lem}
    Consider a commutative diagram,
    \begin{center}
        $\xymatrix{A\ar[r]^f \ar[d]^s &B \ar[r]^g \ar[d]^t &C \ar[r]^h \ar[d]^u &D\ar[d]^v \ar[r]^m &E\ar[d]^w\\
                A' \ar[r]^x & B' \ar[r]^y &C' \ar[r]^z &D'\ar[r]^n&E',}$
    \end{center}
    where the rows are exact sequences. If $t$ and $v$ are isomorphisms, $s$ is surjective and $w$ is injective, then $u$ is an isomorphism. In more detail,
    \begin{enumerate}
        \item If $s$ is surjective and $t$ and $v$ are injective, then $u$ is injective.
        \item If $w$ is injective and $t$ and $v$ are surjective, then $u$ is surjective.
    \end{enumerate}
\end{theorem}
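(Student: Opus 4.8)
The plan is to derive both detailed statements directly from the Four Lemma just established --- indeed, from its weak form, the final clause of that theorem --- and then to combine them. The crucial observation is that the two halves of the Five Lemma are precisely the two conclusions of the Weak Four Lemma, applied to the two overlapping four-column sub-diagrams obtained by deleting the last or the first column. Thus almost all of the work has already been done, and no essentially new computation is required.

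For statement (i), I would restrict attention to the left four columns, that is, the commutative diagram with exact rows on the objects $A,B,C,D$ and $A',B',C',D'$ with vertical maps $s,t,u,v$. The hypotheses of (i) say exactly that $s$ is surjective and $v$ is injective, which are the standing assumptions of the Four Lemma, together with $t$ injective. The first conclusion of the Weak Four Lemma then yields that $u$ is injective. For statement (ii), I would instead restrict to the right four columns, on the objects $B,C,D,E$ and $B',C',D',E'$ with vertical maps $t,u,v,w$. Here $t$ plays the role of the surjective left-hand map and $w$ the role of the injective right-hand map, so the standing hypotheses of the Four Lemma are again met; since $v$ is surjective, the second conclusion of the Weak Four Lemma gives that $u$ is surjective. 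Alternatively, (ii) need not be proved at all: under the duality of the framework, reversing all arrows sends the column in position $k$ to position $6-k$ and interchanges injectivity with surjectivity, so the hypotheses ``$s$ surjective, $t$ injective, $v$ injective'' and conclusion ``$u$ injective'' of (i) become exactly the hypotheses ``$w$ injective, $v$ surjective, $t$ surjective'' and conclusion ``$u$ surjective'' of (ii). Hence (ii) is literally the dual of (i).

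Finally, for the combined statement I would assume that $t$ and $v$ are isomorphisms, hence both injective and surjective. Then (i), using that $s$ is surjective, makes $u$ injective, and (ii), using that $w$ is injective, makes $u$ surjective; that is, $u$ is simultaneously an embedding and a projection, so $u$ is an isomorphism by Lemma~\ref{classificationiso}. I do not anticipate a genuine obstacle: the entire content sits in the Four Lemma, and the only point requiring care is to line up the correct four columns with the Four Lemma's asymmetric hypotheses --- surjectivity on the far left, injectivity on the far right --- for each of the two halves, after which the self-duality of the context disposes of one half for free.
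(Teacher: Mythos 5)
Your proof is correct and takes essentially the same route as the paper's: the paper likewise deduces the Five Lemma by applying both parts of the (Weak) Four Lemma and then concludes via Lemma~\ref{classificationiso} that a morphism which is both injective (an embedding) and surjective (a projection) is an isomorphism. You are in fact more explicit than the paper's very terse proof, which neither spells out the two four-column sub-diagrams nor the duality argument for part~(ii), but the underlying argument is identical.
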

\begin{proof}
    By Lemma \ref{classificationiso}, $t$ and $v$ are injective and surjective. Therefore by applying both parts of The Four Lemma, we obtain that $u$ is both injective and surjective, and hence an isomorphism.
\end{proof}

\begin{definition}
    A sequence of groups and morphisms, $A\xrightarrow{f}B\xrightarrow{g}C$, is said to be \textit{short exact}
    if $f$ is injective, $g$ is surjective and if $\im f=\Ker g$.
\end{definition}

The $3\times3$ Lemma appears in Buchsbaum's early approach~\cite{Buc} to what is now called an abelian category.
In \cite{Bourn2001}, Bourn gives a version in a context which encompasses semi-abelian categories. The proof given in \cite{Grandis} is identical to the one below. The formulation of the $3\times 3$ Lemma, in the context of this paper, along with a partial proof is given in \cite{workshop notes}.

\begin{theorem}[$3\times3$ Lemma]\label{33lem}
    Consider a commutative diagram
    \[
        \xymatrix{ A\ar[r]^f \ar@{{ >}->}[d]^s &B \ar[r]^g \ar@{{ >}->}[d]^t &C  \ar@{{ >}->}[d]^u  \\
            A' \ar[r]^x \ar@{->>}[d]^i& B' \ar[r]^y \ar@{->>}[d]^j&C' \ar@{->>}[d]^k  \\
            A'' \ar[r]^m  &B''\ar[r]^n  &C''}
    \]
    where each column is a short exact sequence.
    \begin{enumerate}
        \item If the bottom two rows are  short exact, then so is the top,
        \item If the top two rows are short exact, then so is the bottom.
    \end{enumerate}
\end{theorem}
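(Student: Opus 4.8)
The plan is to exploit the self-duality of the framework throughout. Statements (i) and (ii) are dual to one another: reversing every arrow and rotating the diagram by a half-turn interchanges the top and bottom rows while preserving short exactness (a self-dual notion) and commutativity of the squares. Hence it suffices to prove (i), and (ii) follows by reading (i) in the dual noetherian form. Statement (iii) is self-dual, so there one proves only ``half'' and invokes duality for the rest. Before starting I would record the commutativity relations $tf=xs$, $yt=ug$, $jx=mi$, $ky=nj$, and the column data: $s,t,u$ injective, $i,j,k$ surjective, with $\im s=\Ker i$, $\im t=\Ker j$, $\im u=\Ker k$.

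For (i) I would establish $f$ injective, $g$ surjective and $\im f=\Ker g$ by direct subgroup chases, each using the assumed exactness of the \emph{middle and bottom} rows together with Axiom~\ref{ax2} and the Galois connection (G). For injectivity of $f$: since $t$ is injective, $\Ker f\subseteq\Ker(tf)=\Ker(xs)=s^{-1}\Ker x=s^{-1}1=\Ker s=1$, using injectivity of $x$ and $s$. Surjectivity of $g$ reduces, since $u$ is injective and $\im u=\Ker k$, to the identity $y\Ker j=\Ker k$: one computes $y^{-1}\Ker k=j^{-1}\im m=\im x\vee\im t$ and applies $y$, where $y\im x=y\Ker y=1$ by exactness of the middle row. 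For exactness at $B$, the inclusion $\im f\subseteq\Ker g$ comes from $ugf=ytf=yxs=1$ and injectivity of $u$, while the reverse inclusion follows by computing $t\Ker g=\im x\wedge\im t=x\im s=t\im f$ and cancelling the injective $t$. Crucially, none of these steps needs the modular law, precisely because exactness of the middle row is available as a hypothesis.

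For (iii) I would first prove $x$ injective: since $m$ is injective and $\im s=\Ker i$, one has $\Ker x\subseteq\Ker(jx)=\Ker(mi)=\Ker i=\im s$, whence $\Ker x=ss^{-1}\Ker x=s(tf)^{-1}1=sf^{-1}\Ker t=s\Ker f=1$ by injectivity of $t$ and $f$. By self-duality, surjectivity of $y$ follows. The hypothesis that $yx$ is a zero morphism gives $\im x\subseteq\Ker y$ immediately, so exactness reduces to the single inclusion $\Ker y\subseteq\im x$. Here I would show, again without the modular law, that $\im x$ and $\Ker y$ have the same trace on $\im t=\Ker j$ from both sides: on the one hand $t^{-1}\Ker y=\Ker g$ and $x^{-1}\im t=\Ker(mi)=\im s$, so $\Ker y\wedge\im t=\im x\wedge\im t=x\im s$ by Axiom~\ref{ax2}; on the other hand $j\Ker y=\Ker n=j\im x$ (as $j\im x=jxA'=\im m=\Ker n$ and $\im x\subseteq\Ker y$), so $\Ker y\vee\im t=\im x\vee\im t$ by Axiom~\ref{ax2}.

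The main obstacle is the final cancellation: from $\im x\subseteq\Ker y$ together with $\im x\wedge\im t=\Ker y\wedge\im t$ and $\im x\vee\im t=\Ker y\vee\im t$, deduce $\im x=\Ker y$. In a modular lattice this is immediate, but a noetherian form only supplies the \emph{Restricted} Modular Law of Lemma~\ref{RML}, which carries a normality/conormality hypothesis on the triple. The saving feature is that the subgroup being cancelled, $\im t=\Ker j$, is simultaneously conormal (it is the image of $t$) and normal (it is the kernel of $j$), by short exactness of the middle column; this is exactly the leverage needed to apply Lemma~\ref{RML} to $\im x\subseteq\Ker y$ relative to $\im t$. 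Carefully checking that the hypotheses of the Restricted Modular Law are genuinely met in this application --- rather than tacitly appealing to full modularity, as one may in the abelian or Grandis-exact setting --- is the delicate heart of the proof, and is precisely what the self-dual non-abelian treatment must supply that the classical argument takes for granted.
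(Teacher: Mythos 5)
Your proposal follows the paper's proof essentially step for step: the same duality scheme (prove (i), deduce (ii) by duality; prove half of (iii) and dualize the rest), the same chases in (i) for $\Ker f=1$, $gB=C$ and $\im f=\Ker g$ (yours are the paper's chases, only reorganized), and in (iii) the same chase for $\Ker x=1$, surjectivity of $y$ by duality, and the same reduction of $\Ker y=\im x$ to the single modular-law identity $\im x\vee(\Ker j\wedge\Ker y)=(\im x\vee\Ker j)\wedge\Ker y$. Your ``cancellation'' packaging of that last step ($\im x\subseteq\Ker y$ together with equal meets and equal joins with $\im t$, then cancel $\im t$) is mathematically identical to the paper's one long chase; all of your intermediate computations are correct.

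The one substantive remark concerns the step you yourself single out as the delicate heart. Lemma~\ref{RML} has exactly two cases: ($Y$ normal and $Z$ conormal) or ($Y$ conormal and $X$ normal). In the application $X=\im x$, $Y=\Ker j=\im t$, $Z=\Ker y$, and the fact that $Y$ is \emph{both} normal and conormal discharges neither case: you would additionally need $\Ker y$ to be conormal or $\im x$ to be normal, and neither is available at this stage of the argument --- indeed, given the rest of your chase, each of these conditions is equivalent to the conclusion $\im x=\Ker y$ itself, so appealing to either would be circular. Hence the justification you offer (``binormality of $\im t$ is exactly the leverage needed'') is not literally an instance of Lemma~\ref{RML} as stated. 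To be clear, this does not separate your proof from the paper's: the paper invokes the Restricted Modular Law at exactly the same point, on exactly the same triple, with no further comment, so you have faithfully reproduced --- and usefully made visible --- a subtlety that the paper glosses over. In the motivating classes of examples the issue evaporates (in a semi-abelian category every subgroup is conormal, so the first case of Lemma~\ref{RML} applies; in a Grandis exact category every subgroup is both normal and conormal), but in an arbitrary noetherian form this particular instance of the modular law needs an argument going beyond the two cases stated in Lemma~\ref{RML}, and neither your proposal nor the paper supplies one.
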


\begin{proof}

    (i) We will show that  $\mathsf{Ker}f=1$ (i.e., $f$ is injective).
    \begin{align*}
        \Ker f & =f^{-1}1                                                    \\
               & =f^{-1}t^{-1}1 &  & \text{[$t$ is injective]}               \\
               & =s^{-1}x^{-1}1 &  & \text{[by commutativity]}               \\
               & =s^{-1}1       &  & \text{[the middle row is short exact]}  \\
               & =1             &  & \text{[the left column is short exact]}\end{align*}

    We will show $gB=C$ (i.e., $g$ is surjective).
    \begin{align*}
        C & =C\vee \Ker u                &  & \text{[$\Ker u \subseteq C$]}             \\
          & =u^{-1}uC                                                                   \\
          & =u^{-1}k^{-1}1               &  & \text{[the right column is short exact]}  \\
          & =u^{-1}(k^{-1}1 \wedge C')                                                  \\
          & =u^{-1}(k^{-1}1\wedge \im y) &  & \text{[the middle row is short exact]}    \\
          & =u^{-1}yy^{-1}k^{-1}1                                                       \\
          & =u^{-1}yj^{-1}n^{-1}1        &  & \text{[by commutativity]}                 \\
          & =u^{-1}yj^{-1}mA''           &  & \text{[the bottom row is exact]}          \\
          & =u^{-1}yj^{-1}miA'           &  & \text{[the left column is short exact]}   \\
          & =u^{-1}yj^{-1}jxA'           &  & \text{[by commutativity]}                 \\
          & =u^{-1}y(xA'\vee \Ker j)                                                    \\
          & =u^{-1}y(xA'\vee tB)         &  & \text{[the middle column is short exact]} \\
          & =u^{-1}(yxA' \vee ytB)                                                      \\
          & =u^{-1}(1\vee ytB)           &  & \text{[the middle row is short exact]}    \\
          & =u^{-1}ytB                                                                  \\
          & =u^{-1}ugB                   &  & \text{[by commutativity]}                 \\
          & =gB                          &  & \text{[the right column is short exact]}
    \end{align*}
    We will show $\mathsf{Im}f=\mathsf{Ker}g$ (i.e., the top row is exact).
    \begin{align*}
        \Ker g & =g^{-1}1                                                                   \\
               & =g^{-1}u^{-1}1              &  & \text{[the right column is short exact]}  \\
               & =t^{-1}y^{-1}1              &  & \text{[by commutativity]}                 \\
               & =t^{-1}xA'                  &  & \text{[the middle row is short exact]}    \\
               & =t^{-1}t(t^{-1}xA')         &  & \text{[the middle column is short exact]} \\
               & =t^{-1}(tt^{-1}xA')                                                        \\
               & =t^{-1}(\im x \wedge \im t)                                                \\
               & =t^{-1}xx^{-1}tB                                                           \\
               & =t^{-1}xx^{-1}j^{-1}1       &  & \text{[the middle column is short exact]} \\
               & =t^{-1}xi^{-1}m^{-1}1       &  & \text{[by commutativity]}                 \\
               & =t^{-1}xi^{-1}1             &  & \text{[the bottom row is short exact]}    \\
               & =t^{-1}xsA                  &  & \text{[the left column is short exact]}   \\
               & =t^{-1}tfA                  &  & \text{[by commutativity]}                 \\
               & =fA                         &  & \text{[the middle column is short exact]} \\
               & =\im f
    \end{align*}

    (ii) is dual to (i) and hence does not require a separate proof.
   
\end{proof}

One of the earliest appearances of the Snake Lemma is in \cite{Buc}. The name is derived from the shape of the construction of the connecting morphism in the resulting exact sequence. The Snake Lemma is the main tool used to construct the \textit{long exact homology sequence} (see \cite{Homology} for instance) which allows one to extract information about higher-order homology objects from lower-order homologies.
\begin{theorem}[Snake Lemma]
    Given a commutative diagram
    $$\xymatrix{A\ar[r]^f\ar[d]^{\alpha}&B\ar@{->>}[r]^g\ar[d]^{\beta}&C\ar[d]^{\gamma}\\
            A'\ar@{{ >}->}[r]^{f'}&B' \ar[r]^{g'}&C'&}$$\\
    with the rows exact, if $\mathsf{Ker}\alpha$, $\Ker \beta$, $\Ker \gamma$ are conormal and $\im \alpha$, $\im \beta$, $\im \gamma$ are normal, then there exists an exact sequence
    $$\xymatrix{\Ker \alpha \ar[r]^{\bar{f}}&\Ker \beta \ar[r]^{\bar{g}}&\Ker \gamma\ar[r]^{\delta}&\mathsf{Coker}\alpha \ar[r]^{\bar{f}'}&\mathsf{Coker}\beta \ar[r]^{\bar{g}'} &\mathsf{Coker}\gamma.}$$
\end{theorem}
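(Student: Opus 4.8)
The plan is to realise all six maps as induced morphisms of explicit zigzags, to read off their image and kernel from the Homomorphism Induction Theorem (Theorem~\ref{HIT}), and to prove only the ``kernel half'', obtaining the ``cokernel half'' by duality. Observe first that the conormality of $\Ker\alpha$, $\Ker\beta$, $\Ker\gamma$ and the normality of $\im\alpha$, $\im\beta$, $\im\gamma$ are exactly the hypotheses needed for Axiom~\ref{ax3} to supply the embeddings $\iota_{\Ker\alpha},\iota_{\Ker\beta},\iota_{\Ker\gamma}$ and the projections $\pi_{\im\alpha},\pi_{\im\beta},\pi_{\im\gamma}$, so that the objects $\Ker\alpha/1,\dots,\mathsf{Coker}\gamma=C'/\im\gamma$ and the zigzags below make sense. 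I would take $\bar f$ and $\bar g$ to be induced by
$$\Ker\alpha/1\xrightarrow{\iota_{\Ker\alpha}}A\xrightarrow{f}B\xleftarrow{\iota_{\Ker\beta}}\Ker\beta/1 \qquad\text{and}\qquad \Ker\beta/1\xrightarrow{\iota_{\Ker\beta}}B\xrightarrow{g}C\xleftarrow{\iota_{\Ker\gamma}}\Ker\gamma/1,$$
and, crucially, $\delta$ to be induced by the connecting zigzag
$$\Ker\gamma/1\xrightarrow{\iota_{\Ker\gamma}}C\xleftarrow{g}B\xrightarrow{\beta}B'\xleftarrow{f'}A'\xrightarrow{\pi_{\im\alpha}}A'/\im\alpha=\mathsf{Coker}\alpha.$$

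For each zigzag I would first check, via Theorem~\ref{HIT}, that it induces a morphism, i.e. that chasing the trivial subgroup forward yields the trivial subgroup and chasing the largest subgroup backward yields the largest subgroup. For $\bar f$ and $\bar g$ this is immediate from $f\Ker\alpha\subseteq\Ker\beta$, $g\Ker\beta\subseteq\Ker\gamma$ and the injectivity of the embeddings. The delicate case is $\delta$, and this is where I expect the real work: the forward chase of the trivial subgroup reduces, using commutativity and Axiom~\ref{ax2}, to $(f')^{-1}\beta\,\Ker g=\im\alpha$ (so $\pi_{\im\alpha}$ sends it to $1$), while the backward chase of the largest subgroup reduces to $g\beta^{-1}f'A'=\Ker\gamma$, which follows from $g\beta^{-1}f'A'=g\beta^{-1}\Ker g'=g g^{-1}\gamma^{-1}1=\Ker\gamma\wedge\im g=\Ker\gamma$, using exactness of the bottom row, the commutativity $g'\beta=\gamma g$, and surjectivity of $g$. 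This is precisely the point that the pyramid machinery (encapsulated in Theorem~\ref{HIT}) is designed to handle, and it is the historical obstacle flagged in the introduction.

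With the maps in hand, Theorem~\ref{HIT} computes the direct and inverse image maps of an induced morphism by chasing subgroups. Exactness at $\Ker\beta$ becomes the equality $\im\bar f=\Ker\bar g$ inside $\Ker\beta/1$; since $\im\bar f=\iota_{\Ker\beta}^{-1}f\Ker\alpha$ and $\Ker\bar g=\iota_{\Ker\beta}^{-1}\Ker g$, it suffices to prove $f\Ker\alpha=\Ker\beta\wedge\Ker g$ in $B$ (both sides then having the same inverse image along $\iota_{\Ker\beta}$, as that inverse image already meets with $\Ker\beta$). I would obtain this from
$$\Ker\beta\wedge\Ker g=ff^{-1}(\beta^{-1}1\wedge fA)=f\bigl((\beta f)^{-1}1\wedge A\bigr)=f\alpha^{-1}(f')^{-1}1=f\Ker\alpha,$$
using $\Ker g=\im f$, Axiom~\ref{ax2}, commutativity, and injectivity of $f'$. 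Exactness at $\Ker\gamma$ becomes $\im\bar g=\Ker\delta$, i.e. (after applying $\iota_{\Ker\gamma}^{-1}$) the identity $g\Ker\beta=g\beta^{-1}f'\im\alpha$; this collapses to $g\Ker\beta$ because $\beta^{-1}f'\im\alpha=\beta^{-1}\beta fA=fA\vee\Ker\beta$ and $gfA=g\,\Ker g=1$.

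Finally, I would invoke the self-duality of the framework: the dual of the Snake diagram is again a Snake diagram, in which conormal kernels become normal images, injective becomes surjective, and exact rows stay exact. By Lemma~\ref{induced duality} the opposite of the connecting zigzag, read in the dual category, induces the same morphism $\delta$ and is the connecting zigzag of the dual Snake, so the dual connecting map is $\delta$ itself. Hence applying the already-proved kernel half to the dual diagram produces the maps $\bar f'$ and $\bar g'$ together with exactness at $\mathsf{Coker}\beta$ and $\mathsf{Coker}\alpha$, completing the six-term exact sequence. The main obstacle throughout is the construction and the two chase conditions for $\delta$; once these are verified the remaining exactness statements are short subgroup computations, and one full half of them comes for free by duality.
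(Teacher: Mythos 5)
Your proposal is correct and takes essentially the same route as the paper's own proof: the same three zigzags (including the connecting zigzag for $\delta$), Theorem~\ref{HIT} used both to construct the morphisms and to read off their images and kernels, and duality via Lemma~\ref{induced duality} to obtain the cokernel half for free. The only differences are cosmetic, e.g.\ you verify the exactness identities in $B$ and $C$ before pulling back along the embeddings, where the paper chases entirely inside the subquotient lattices.
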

\begin{proof}
    Let us first construct each morphism in the required sequence. We will do so in each case by carrying out chasing required by Theorem~\ref{HIT} along an appropriate zigzag.

    To get the morphism $\bar{f}$, consider the zigzag
    $$\Ker \alpha \xrightarrow{\iota_{\Ker \alpha}} A \xrightarrow{f} B \xleftarrow{\iota_{\Ker \beta}} \Ker \beta.$$

    \begin{align*}
        \iota_{\Ker \beta}^{-1} f\iota_{\Ker \alpha}1               & =\iota_{\Ker \beta}^{-1}1                                                                        \\&=1&&\text{[$\iota_{\Ker \beta}$ is injective]}\\
        \iota_{\Ker \alpha}^{-1}f^{-1} \iota_{\Ker \beta}\Ker \beta & =\iota_{\Ker \alpha}^{-1}f^{-1}\Ker \beta      &  & \text{[$\im \iota_{\Ker \beta}=\Ker \beta$]} \\
                                                                    & =\iota_{\Ker \alpha}^{-1} \alpha^{-1} f'^{-1}1 &  & \text{[by commutativity]}                    \\
                                                                    & =\iota_{\Ker \alpha}^{-1}\Ker \alpha           &  & \text{[$f'$ is injective]}                   \\
                                                                    & =\Ker \alpha
    \end{align*}

    To obtain the morphism $\bar{g}$, consider the zigzag
    $$\Ker \beta \xrightarrow{\iota_{\Ker \beta}} B\xrightarrow{g}C \xleftarrow{\iota_{\Ker \gamma}} \Ker \gamma.$$
    \begin{align*}
        \iota_{\Ker \gamma}^{-1} g\iota_{\Ker \beta}1                & =\iota_{\Ker \gamma}^{-1}1                                                                                                                           \\
                                                                     & =1                                                                                      &  & \text{[$\iota_{\Ker \gamma}$ is injective]}             \\
        \iota_{\Ker \beta}^{-1}g^{-1}\iota_{\Ker \gamma} \Ker \gamma & =\iota_{\Ker \beta}^{-1}g^{-1} \Ker \gamma                                              &  & \text{[$\im \iota_{\Ker \gamma}=\Ker \gamma$]}          \\
                                                                     & =\iota_{\Ker \beta}^{-1} \beta^{-1}g'^{-1} 1                                            &  & \text{[by commutativity]}                               \\
                                                                     & =\iota_{\Ker \beta}^{-1} \beta^{-1}g'^{-1} 1 \vee  \iota_{\Ker \beta}^{-1} \beta^{-1} 1 &  & \text{[$\beta ^{-1} 1\subseteq \beta ^{-1} g^{'-1} 1$]} \\
                                                                     & =\Ker \beta
    \end{align*}

    To find the morphism $\delta$, consider the zigzag
    $$\Ker \gamma \xrightarrow{\iota_{\Ker \gamma}}C \xleftarrow{g} B\xrightarrow{\beta }B' \xleftarrow{f'} A' \xrightarrow{\pi_{\im \alpha}} \mathsf{Coker} \alpha .$$
    \begin{align*}
        \pi_{\im \alpha} f'^{-1} \beta g^{-1} \iota_{\Ker \gamma} 1 & =\pi_{\im \alpha} f'^{-1} \beta g^{-1} 1                                 \\
                                                                    & =\pi_{\im \alpha} f'^{-1} \beta fA       &  & \text{[$g^{-1} 1=fA$]}     \\
                                                                    & =\pi_{\im \alpha} f'^{-1} f' \alpha A    &  & \text{[by commutativity]}  \\
                                                                    & = \pi_{\im \alpha} \alpha A              &  & \text{[$f'$ is injective]} \\
                                                                    & =1
    \end{align*}
    Dually, $$\iota_{\Ker \gamma}^{-1}g \beta^{-1} f' \pi_{\im \alpha}^{-1} \mathsf{Coker}\alpha = \iota_{\Ker \gamma}^{-1}g \beta^{-1} f' A'=\Ker \gamma.$$


    $\bar{f}'$ is constructed dually to $\bar{f}$. Further note that $\bar{f}'$ is induced by the zigzag $$\mathsf{Coker}\alpha \xleftarrow{\pi_{\im \alpha}}A'\xrightarrow{f'}B' \xrightarrow{\pi_{\im \beta}} \mathsf{Coker}\beta .$$  The morphism $\bar{g}'$ constructed dually to $\bar{g}$.

    We then have the sequence $$\xymatrix{\Ker \alpha \ar[r]^{\bar{f}}&\Ker \beta \ar[r]^{\bar{g}}&\Ker \gamma\ar[r]^{\delta}&\mathsf{Coker}\alpha \ar[r]^{\bar{f}'}&\mathsf{Coker}\beta \ar[r]^{\bar{g}'} &\mathsf{Coker}\gamma .}$$Let us now show that this sequence is exact. Note that by the Homomorphism Induction Theorem, to compute the kernels and images of these morphisms, it suffices to chase the smallest subgroup backward and the largest subgroup forward, respectively, along the zigzags that were used to construct those morphisms.
    \begin{align*}
        \im \bar{f} & =\iota_{\Ker \beta }^{-1} f\iota_{\Ker \alpha} \Ker \alpha                                                                   \\
                    & =\iota_{\Ker \beta }^{-1} f\alpha^{-1} 1                                                                                     \\
                    & =\iota_{\Ker \beta }^{-1} f\alpha^{-1}f^{'-1}1                                    &   & \text{[$f'$ is injective]}           \\
                    & =\iota_{\Ker \beta }^{-1} f f^{-1} \beta^{-1} 1                                   &   & \text{[by commutativity]}            \\
                    & =\iota_{\Ker \beta }^{-1}( \beta^{-1} 1\wedge \im f)                                                                         \\
                    & = \iota_{\Ker \beta }^{-1}( \beta^{-1} 1) \wedge (\iota_{\Ker \beta }^{-1} \im f)                                            \\
                    & = \Ker \beta \wedge (\iota_{\Ker \beta }^{-1} \Ker g)                             &   & \text{[by exactness of the top row]} \\
                    & = \iota_{\Ker \beta }^{-1} g^{-1} \iota_{\Ker \gamma} 1                           &                                          \\
                    & = \Ker \bar{g}
    \end{align*}
    \begin{align*}
        \Ker \delta & =\iota_{\Ker \gamma}^{-1}g\beta^{-1}f' \pi_{\im \alpha}^{-1}1                                    \\
                    & =\iota_{\Ker \gamma}^{-1}g\beta^{-1}f'\im \alpha                                                 \\
                    & =\iota_{\Ker \gamma}^{-1}g\beta^{-1}\beta fA                  &  & \text{[by commutativity]}     \\
                    & =\iota_{\Ker \gamma}^{-1}g(fA\vee \beta^{-1}1)                                                   \\
                    & =\iota_{\Ker \gamma}^{-1}(gfA\vee g\beta^{-1}1)                                                  \\
                    & =\iota_{\Ker \gamma}^{-1}(1\vee g\beta^{-1}1)                 &  & \text{[the top row is exact]} \\
                    & =\iota_{\Ker \gamma}^{-1}g\beta^{-1}1                                                            \\
                    & =\im \bar{g}
    \end{align*}

    It follows that $\im \delta =\Ker \bar{f}$ by applying duality to the induced morphism (this is possible thanks to Lemma~\ref{induced duality}).

    Lastly, $\im \bar{f}'=\Ker \bar{g}'$ follows by duality applied to $\im \bar{f}=\Ker \bar{g}$.
\end{proof}

\begin{remark}
    \label{snake remark}
    Note that the Snake Lemma proved here is more general than the one found in \cite{workshop notes}, which relies on the additional assumptions that the kernel of the diagonal of the right square is conormal and that the image of the diagonal of the left square is normal. While it is true, for groups, that the kernel of any morphism is conormal (since every subgroup of a given group is conormal), it is not always true that the image of a morphism is normal. We then give an example in the context of groups in which the image of the diagonal of the left square is not normal.  Consider a commutative diagram of groups
    $$\xymatrix{B \ar@{{ >}->}[r]^f\ar@{{ >}->}[d]^f&V \ar@{{ >}->}[d]^i \ar@{->>}[r]^g &  V/B\ar[d]^h \\
            V \ar@{{ >}->}[r]^i& D_8 \ar@{->>}[r]^j& D_8 /V .}$$
    The group $D_8$ is the dihedral group of order $8$ given by $D_8=\langle a,b\mid o(b)=2,o(a)=4, ba=a^{-1}b\rangle=\{e,a,a^2,a^3,b,ab,a^2 b,a^3 b\}$ (where $o(x)$ is the order of the element $x$). $B$ is the subgroup $B=\{e,b\}$ of $D_8$, while $V$ is the subgroup $V=\{e,b,a^2,a^2 b\}$ of $D_8$. Note that $B\approx  \mathds{Z}_2$ (since $B$ is of order 2), $V\approx  \mathds{V}_4$ (the Klein-four group), and $B\subseteq V$.
    The morphism $f$ is the embedding of $B$ into $V$. The morphism $i$ is similarly defined to be the embedding of the subgroup $V$ into the group $D_8$. Notice that since $B$ and $V$ are subgroups of index $2$ in $V$ and $D_8$ respectively, both $B$ and $V$ are normal in $V$ and $D_8$ respectively. Therefore, we may define the morphisms $g$ and $j$ as projections associated to these normal subgroups. Observe that the rows are then exact. Lastly, the morphism $h$ is a zero morphism. Observe that $h$ makes the right square commute since $ji$ is a zero morphism (since $\im i=\Ker j$).
    Furthermore, since the rows are exact, we may apply the Snake Lemma (from this paper) to obtain an exact sequence
    $$1\rightarrow 1 \rightarrow V/B \rightarrow V/B \rightarrow  D_8/V \rightarrow D_8 /V.$$
    Notice that $V/B$ and $D_8 /V$ are both groups of order $2$ and hence must be isomorphic to $\mathds{Z}_2$. We then obtain the exact sequence $$1\rightarrow 1\rightarrow \mathds{Z}_2 \rightarrow \mathds{Z}_2 \rightarrow \mathds{Z}_2 \rightarrow \mathds{Z}_2$$ by including the relevant isomorphisms. However, since the normalizer of the subgroup $B$ of $D_8$ is precisely~$V$ (this may be easily verified by calculation), we have that $B$ is normal in $V$ but not in $D_8$. Therefore, the image $\im (if)=B$ of the left diagonal is not normal. This shows us that the Snake Lemma in \cite{workshop notes} may not be applied to this example, while the Snake Lemma in this paper may.
\end{remark}

\section{The Middle $3\times 3$ Lemma}
\begin{theorem}
    Consider a commutative diagram
    \[
        \xymatrix{ A\ar[r]^f \ar@{{ >}->}[d]^s &B \ar[r]^g \ar@{{ >}->}[d]^t &C  \ar@{{ >}->}[d]^u  \\
            A' \ar[r]^x \ar@{->>}[d]^i& B' \ar[r]^y \ar@{->>}[d]^j&C' \ar@{->>}[d]^k  \\
            A'' \ar[r]^m  &B''\ar[r]^n  &C''}
    \]
    where each column is a short exact sequence. If the top and bottom rows are short exact and $yx$ is a zero morphism, then the middle row is short exact.
    
\end{theorem}
\begin{proof}
    $\im x \subseteq \Ker y$ since $yx$ is a zero morphism. By Axiom~\ref{ax6}, there exists a binormal subgroup $T$ of $B'$ such that $\im x \subseteq T \subseteq \Ker y$. We will show that $T \subseteq \im x$.
    \begin{align*}
        \im x &= \im x \vee \im xs\\
        &= \im x \vee \im tf\\
        &= \im x \vee t \im f\\
        &= \im x \vee t (\Ker g)\\
        &= \im x \vee t(g^{-1}u^{-1} 1)\\
        &= \im x \vee t(t^{-1}y^{-1} 1)\\
        &= \im x \vee (\im t \wedge \Ker y) \\
        & \supseteq  \im x \vee (\im t \wedge T)\\
        &= \im x \vee (\Ker j \wedge T)\\
        &= (\im x \vee \Ker j) \wedge T\\
        &= (j^{-1}j xA' ) \wedge T\\
        &= (j^{-1} miA') \wedge T\\
        &= (j^{-1} mA'') \wedge T\\
        &= (j^{1} n^{-1} 1) \wedge T \\
        &= (y^{-1} k^{-1} 1) \wedge T \\
        &\supseteq \Ker y \wedge T\\
        &=T.
    \end{align*}
It follows that $\im x =T$. Moreover, by a dual argument, it follows that $T=\Ker y$. 
\end{proof}

In fact, the Middle $3\times 3$ Lemma above is actually equivalent to the property that the subgroup poset of a group cannot have the following subposet

\[
\xymatrix@R=1.5cm@C=1.5cm{
  & *+[o][F-]{J} \ar@{-}[dl] \ar@{-}[dr] \\
  *+[o][F-]{N} \ar@{-}[d] & & *+[o][F-]{B} \ar@{-}[ddl] \\
  *+[o][F-]{C} \ar@{-}[dr] &   & \\
  &*+[o][F-]{M}&
}
\]
where $C$ is conormal, $N$ is normal and $B$ is binormal. If we do have such a subposet, then we may construct the commutative diagram 
$$\xymatrix{M \ar[r]^f \ar@{{ >}->}[d]^s &B \ar[r]^g \ar@{{ >}->}[d]^t &B/M  \ar@{{ >}->}[d]^u  \\
            C \ar[r]^x \ar@{->>}[d]^i& J \ar[r]^y \ar@{->>}[d]^j&J/N \ar@{->>}[d]^k  \\
            C/M \ar[r]^m  &J/B \ar[r]^n  &0_J.}$$
The diagram is formed by noting that: \begin{enumerate}
    \item $M\lhd B$ since it is the kernel of the composite $B \rightarrow J \rightarrow J/N$
    \item Similarly $M \lhd C$ as it the kernel of the composite $C \rightarrow J \rightarrow J/B$.
    \item The map $u$ is induced by the zigzag $$B/M  \leftarrow B \rightarrow J \rightarrow J/N.$$ It may be verified that $u$ is an isomorphism by the Universal Isomorphism Theorem. Similarly, $m$ is an isomorphism.
    \item $J= N \vee B \lhd J$ and thus we use the notation $0_J=J/J$.
\end{enumerate} 
It follows that the columns and the bottom and top rows are short exact. Thus, the Middle $3\times 3$ Lemma forces $C=N$. 
\section{Appendix: Some Exercises}

In this section we offer the reader a playground of exercises for trying out the technique of proving diagram lemmas illustrated in the previous section. While some of the exercises are new lemmas discovered by the first author in \cite{Day}, most of them are well-known to hold for various categories of group-like structures.

\begin{exercise}[Incomplete Snail Lemma]
    Given a commutative diagram,
    $$\xymatrix@!=2pt{W_1 \ar[ddrrrr]^x \ar[dddrrr]^g&&&&&&&&\\
        &&&&&&&&\\
        &&&&W_2\ar[ddrrrr]^y&&&&\\
        &&&X\ar@{->>}[ur]^b \ar[dr]^d&&&&&\\
        &&Y_1 \ar[rr]^e \ar[ur]^a &&Y_2\ar[rrrr]^f &&&&Z ,}$$
    where all sequences except possibly the sequence $W_1\rightarrow W_2 \rightarrow Z$ are exact. Prove that also the sequence ${W_1\rightarrow W_2 \rightarrow Z}$ is exact.
\end{exercise}

\begin{exercise}\label{square exact}
    Consider a commutative diagram:
    $$\xymatrix{A\ar[r]^f\ar@{->>}[d]^x&B\ar[r]^g\ar[d]^y&C\ar@{{ >}->}[d]^z\\
            A'\ar[r]^m&B'\ar[r]^n &C'.}$$
    Prove one of the following two statements and deduce the other by duality:
    \begin{enumerate}
        \item If $y$ is surjective and the top row is exact, then the bottom row is exact.
        \item If $y$ is injective and the bottom row is exact, then the top row is exact.
    \end{enumerate}
\end{exercise}

\begin{exercise}[Spider Lemma]
    Consider a commutative diagram,
    $$ \xymatrix{V\ar[rr]^f\ar@{{ >}->}[dr]^g&&W\\
            &X\ar@{->>}[dr]^i \ar@{->>}[ur]^h&\\
            Y\ar@{{ >}->}[ur]^j \ar[rr]_k&&Z,}$$
    where the diagonals are short exact sequences. Prove that if $k$ is an isomorphism, then so is $f$. Hint: use duality to shorten the proof.
\end{exercise}

\begin{exercise}\label{diamond lemma}
    Consider a commutative diagram:
    $$\xymatrix{&A\ar[dl]^a\ar[dr]^b&&&G\ar[dl]^c\ar[dr]^d&\\ D \ar[rr]^p\ar[dr]^u&&B\ar[r]^q\ar[dl]^v&J\ar[rr]^r\ar[dr]^x&&H\ar[dl]^y\\
            &C&&&I,&
        }$$
    where the horizontal sequence is exact. Prove the following:
    \begin{enumerate}
        \item If $u$ is surjective and $v$ and $y$ are injective, then $x$ is injective.
        \item If $a$ and $c$ are surjective and $d$ is injective, then $b$ is surjective.
    \end{enumerate}
\end{exercise}


\begin{exercise}[Diamond Lemma]\label{diamond 1}
    Consider a commutative diagram,
    $$\xymatrix{&&&A\ar[dl]_f\ar[dr]^g&&&\\
            &&H\ar[rr]^a\ar[dd]^c\ar@{->>}[dl]^x &&B\ar[dd]^d\ar[dr]^u&&\\
            &G\ar[dr]^y&&&&C\ar@{{ >}->}[dl]^v&\\
            &&F\ar[rr]^b\ar[dr]^m&&D\ar[dl]^n &&\\
            &&&E,&&&}$$
    where the four edges of the outer square are exact sequences. (The sequence $(y,m)$, as well as the other sequences forming edges of the outer square are assumed to be exact and not necessarily short exact.) Moreover, the arrows $x$ and $v$  are surjective and injective, respectively.
    Prove the following:
    \begin{enumerate}
        \item If $a$ is injective, then $y$ is injective.
        \item If $b$ is surjective, then $u$ is surjective.
    \end{enumerate}
\end{exercise}

\begin{exercise}[Baby Dragon Lemma]\label{baby dragon}
    Consider a commutative diagram,
    $$\xymatrix{
            &&A \ar@{->>}[dl]^f \ar@{{ >}->}[dr]^g &&B\ar@{{ >}->}[dr]^n\ar[dl]^m &&C\ar[dl]^z\ar[dr]^{\alpha}&&\\
            &S\ar[dr]^{\beta}&&T\ar[dl]^h \ar@{->>}[dr]^o&&U\ar[dl]^p\ar@{->>}[dr]^y&&V\ar@{{ >}->}[dl]^x&\\
            &&A'&&B'&&C'&&\\
        }$$
    where all the diagonal sequences are exact, and certain arrows are injective or surjective as indicated.  Prove the following statements:
    \begin{enumerate}
        \item If $\alpha$ is injective, then so is $\beta$.
        \item If $\beta$ is surjective, then so is $\alpha$.
    \end{enumerate}
\end{exercise}

\begin{remark}
    Note that this diagram lemma holds for arbitrarily long diagrams of this type (i.e., we may insert as many copies of the middle diamond as we wish), as in the case of the Dragon Lemma below.
\end{remark}


\begin{exercise}[Dragon Lemma]
    Consider a commutative diagram
    $$\xymatrix@!=4pt{&&N\ar[dl]_{a} \ar[dr]^{b}&&&&&&&&&&&&E\ar[dr]^{f}\ar[dl]_{e}&&\\
        &Q\ar[dr]^{c}&&P\ar[dl]^{d}&&&&&&&&&&H\ar[dr]^{g}&&G\ar[dl]^{h}&\\
        &&Z\ar[dr]^{x_2} \ar@{->>}[dl]^{x_1} &&A \ar[dl]^{x_3} \ar@{{ >}->}[dr]^{x_4} &&J\ar[dl]^{x_5} \ar@{{ >}->}[dr]^{x_6} &&&&B\ar@{{ >}->}[dr]^{x_{n-3}}\ar[dl]^{x_{n-4}} &&C\ar[dl]^{x_{n-2}}\ar@{{ >}->}[dr]^{x_{n-1}}&&D\ar[dl]^{x_{n}} \ar[dr]^{x_{n+1}} &&\\
        &R \ar[dr]^{y_1}&&S\ar@{->>}[dr]^{y_3}\ar[dl]^{y_2}&&T\ar[dl]^{y_4} \ar@{->>}[dr]^{y_5}&&K\ar[dl]^{y_6}&\cdots&T'\ar@{->>}[dr]^{y_{n-4}}&&U\ar[dl]^{y_{n-3}}\ar@{->>}[dr]^{y_{n-2}}&&V\ar[dl]^{y_{n-1}}\ar[dr]^{y_{n}}&&W\ar@{{ >}->}[dl]^{y_{n+1}}&\\
        &&Z'\ar[dl]^{i} \ar[dr]^{j}&&A'&&J'&&&&B'&&C'&&D'\ar[dr]^{o}\ar[dl]^{m}&&\\
        &Q'\ar[dr]^{k}&&P'\ar[dl]^{l}&&&&&&&&&&H'\ar[dr]^{p}&&G'\ar[dl]^{q}&\\
        &&N'&&&&&&&&&&&&E'&&}$$
    where all sequences are exact, and certain arrows are injective or surjective as indicated. Prove the following:
    \begin{enumerate}
        \item If $a$ and $e$ are surjective, then $y_1$ is injective.
        \item If $l$ and $q$ are injective, then $x_{n+1}$ is surjective.
    \end{enumerate}
\end{exercise}

A version of the following diagram lemma was formulated and proved by J.~Lambek in \cite{Lambek} for the context of groups.

\begin{exercise}[Goursat's Theorem]\label{goursatex}
    Given a commutative diagram,
    $$\xymatrix{A\ar[r]^{\lambda}\ar[d]^{\alpha}&B\ar[r]^{\mu}\ar[d]^{\beta}&C\ar[d]^{\gamma}\\
            D \ar[r]^{\lambda '}  &E \ar[r]^{\mu '}& F}$$\\
    with exact rows, prove that if $\Ker (\gamma \mu)$ is conormal, then:
    \begin{enumerate}
        \item $\im (\beta \lambda)  \lhd \im \beta \wedge \im \lambda '$ and $\Ker \beta \vee \Ker \mu \lhd   \Ker (\gamma \mu)$, and
        \item  $(\im \beta \wedge \im \lambda ')/ \im (\beta \lambda) \approx \Ker (\gamma \mu) /(\Ker \beta \vee \Ker \mu )$.
    \end{enumerate}
    Hint: use Theorem \ref{quotientiso}.
\end{exercise}

\begin{exercise}[Generalized Snail Lemma]\label{generalizedsnail}
    Given a commutative diagram
    $$\xymatrix{A\ar[dd]^{\alpha} \ar[rr]^f  \ar[dr]^{\gamma}&&B\ar[dd]^{\beta}\\
            &C\ar[ur]^{f_0 '}\ar[dl]^{\beta '}&\\
            A_0\ar[rr]^{f_0}&&B_0 ,}$$\\
    if $\Ker \gamma$, $\Ker \alpha$, $\Ker \beta '$ are conormal and $\im \gamma$, $\im \alpha$, $\im \beta '$ are normal, then prove that the following sequence is exact:
    $$\Ker \gamma \xrightarrow{v} \Ker \alpha
        \xrightarrow{w} \Ker \beta '
        \xrightarrow{x} \mathsf{Coker} \gamma
        \xrightarrow{y} \mathsf{Coker} \alpha
        \xrightarrow{z}\mathsf{Coker} \beta'.$$
\end{exercise}

\begin{definition}
    A \textit{double complex} is a triple $(X,\delta_h ,\delta_v)$, where for all integers $m$ and $n$, $X=(X^{n,m})$ is a family of groups,
    $\delta_h=(\delta_h^{n,m}\colon X^{n,m}\rightarrow X^{n,m+1})$ and $\delta_v=(\delta_v^{n,m}\colon X^{n,m}\rightarrow X^{n+1,m})$ are families of group morphisms such that $\delta_h^{n,m} \delta_h^{n,m-1}=0$, (i.e., $\delta_h^{n,m} \delta_h^{n,m-1}$ is a zero morphism), $\delta_v^{n,m}\delta_v^{n-1,m}=0$ (i.e., $\delta_v^{n,m}\delta_v^{n-1,m}$ is a zero morphism), and $\delta_v^{n,m+1} \delta_h^{n,m}=\delta_h^{n+1, m} \delta_v^{n,m}$.
\end{definition}
\begin{figure}
    \centering
    $$\xymatrix{&&\ar@{.}[d]&\ar@{.}[d]& &\\
            &&\bullet\ar[d]^m &\ar[d] &&\\
            \ar@{.}[r]&\bullet \ar[r]^a \ar[d]^j \ar[dr]^p &C \ar[d]^c \ar[dr]^r \ar[r]^k & \bullet \ar[d]^v \ar[r] &\ar@{.}[r]&\\
            \ar@{.}[r]&\bullet \ar[r]^d &A \ar[r]^e \ar[d]^f \ar[dr]^q &B\ar[d]^g \ar[r]^s& \bullet \ar[d]^n \ar@{.}[r]&\\
            \ar@{.}[r]&\ar[r]&\bullet \ar[d]\ar[r]^l &D\ar[d]^u \ar[r]^t&\bullet \ar@{.}[r]&,\\
            &&\ar@{.}[d]&\bullet \ar@{.}[d]&&\\
            &&&&& } $$
    \caption{Double Complex}
    \label{fig:double complex}
\end{figure}
\begin{definition}
    Consider a double complex in Figure~\ref{fig:double complex}
    (where $p=ca$, $r=ec$ and $q=ge$). We define the following \textit{homology objects} associated with the group $A$:
    \begin{enumerate}
        \item $ A_h=\Ker e/ \im d$, whenever $\im d \lhd \Ker e$.
        \item $ A_{\square}=\Ker q /(\im c \vee \im d)$, whenever $\im c \vee \im d \lhd \Ker q$.
        \item $ {}^{\square}A=(\Ker e\wedge \Ker f)/\im p$, whenever $\im p \lhd \Ker e \wedge \Ker f$.
    \end{enumerate}
\end{definition}
\begin{theorem}[Salamander Lemma]
    Consider a double complex in Figure~\ref{fig:double complex}.
    If the homology objects $C_{\square}$, $A_h$, $A_{\square}$, ${}^{\square}\!B$, $B_h$ and ${}^{\square}\!D$ are defined, and $\im c$ is a normal subgroup of $A$, then there is an exact sequence:
    $$C_{\square}\xrightarrow{v} A_h \xrightarrow{w} A_{\square}\xrightarrow{x} {}^{\square}\!B\xrightarrow{y} B_h \xrightarrow{z} {}^{\square}\!D.$$
\end{theorem}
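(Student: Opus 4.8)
The plan is to construct the five morphisms $v,w,x,y,z$ by means of the Homomorphism Induction Theorem (Theorem~\ref{HIT}) and then to establish exactness at each of the four inner nodes by computing the relevant images and kernels as subquotients, the two workhorses being the identities $ff^{-1}B=B\wedge\im f$ and $f^{-1}fA=A\vee\Ker f$ from Axiom~\ref{ax2}, together with the Restricted Modular Law (Lemma~\ref{RML}). First I would unwind the six homology objects from Figure~\ref{fig:double complex}: writing $r=ec$, $p=ca$, $q=ge$, they are $C_\square=\Ker r/(\im m\vee\im a)$, $A_h=\Ker e/\im d$, $A_\square=\Ker q/(\im c\vee\im d)$, ${}^\square B=(\Ker s\wedge\Ker g)/\im r$, $B_h=\Ker s/\im e$ and ${}^\square D=(\Ker t\wedge\Ker u)/\im q$. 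Each numerator is conormal and each denominator relatively normal inside it precisely because the corresponding object is assumed to be defined. The five maps are the ones induced by the grid morphisms: the \emph{extramural} maps $v$, $x$, $z$ are induced by $c$, $e$, $g$, while the \emph{intramural} maps $w$, $y$ are induced by $\mathsf{id}_A$ and $\mathsf{id}_B$. The well-definedness facts one needs here, for instance $c(\Ker r)\subseteq\Ker e$, $e(\Ker q)\subseteq\Ker s\wedge\Ker g$ and the compatibility with numerators and denominators, all reduce to the complex identities $ed=0$, $se=0$, $gr=0$ (and the vanishing vertical/horizontal composites) together with commutativity.

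Second, I would construct each map via Theorem~\ref{HIT}. For example $v$ is induced by the zigzag
\[
C_\square\xleftarrow{\pi_C}\Ker r/1\xrightarrow{\iota_{\Ker r}}C\xrightarrow{c}A\xleftarrow{\iota_{\Ker e}}\Ker e/1\xrightarrow{\pi_A}A_h,
\]
and the Homomorphism Induction Theorem reduces the existence of $v$ to two chases (the trivial subgroup forward gives the trivial subgroup, the largest subgroup backward gives the largest subgroup), both immediate from the containments above and Axiom~\ref{ax2}. The remaining four maps are handled by entirely analogous zigzags.

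Third, for exactness I would use that, by Theorem~\ref{HIT}, the image and kernel of each induced map are obtained by chasing the top subgroup forward and the bottom subgroup backward along its zigzag; using $ff^{-1}B=B\wedge\im f$ and $f^{-1}fA=A\vee\Ker f$ each comparison reduces to an identity of subgroups of a single numerator. At $A_h$ one finds $\im v=\big((\Ker e\wedge\im c)\vee\im d\big)/\im d$ and $\Ker w=\big(\Ker e\wedge(\im c\vee\im d)\big)/\im d$, which agree by Lemma~\ref{RML} applied to $\im d\subseteq\Ker e$. At $A_\square$ both $\im w$ and $\Ker x$ turn out to have numerator $\Ker e\vee\im c$ (here one uses only $\im d\subseteq\Ker e$ and $e^{-1}e(\im c)=\im c\vee\Ker e$). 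The remaining two nodes are symmetric: at $B_h$ one again invokes Lemma~\ref{RML}, now applied to $\im e\subseteq\Ker s$, and exactness at ${}^\square B$ follows from exactness at $A_\square$ by Lemma~\ref{induced duality}, since that computation used only the self-dual ingredients $\im d\subseteq\Ker e$ and Axiom~\ref{ax2}.

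The hard part, and the only place the hypothesis that $\im c$ is normal in $A$ is used, is exactness at $A_h$. There the Restricted Modular Law must be applied with middle term $\im c$, which, being an image, is automatically \emph{conormal} but not automatically \emph{normal}; normality of $\im c$ is exactly what licenses the first case of Lemma~\ref{RML} (with $\Ker e$ conormal, coming from $A_h$ being defined). This is what breaks the naive self-duality of the statement, since the analogous step at $B_h$ has middle term $\Ker g$, a kernel, which is automatically normal. Consequently one cannot obtain $B_h$ from $A_h$ simply by duality, and that computation must be carried out directly. I expect the most delicate part throughout to be the bookkeeping of which subgroups are conormal and relatively normal — needed both to set up the zigzags defining the maps and to legitimately invoke Lemma~\ref{RML} — rather than the equational chases themselves.
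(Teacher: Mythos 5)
Your proposal is correct, but be aware that this is the one statement in the paper that has no proof at all: the Salamander Lemma sits in the appendix with its proof deferred to the cited reference \cite{Salamander}, whose structure (one result for existence of the morphisms, another giving a simplified exactness criterion) your outline essentially reproduces. What you propose is precisely the methodology the paper itself uses for the Snake Lemma: zigzags through the ambient groups, Theorem~\ref{HIT} for existence, images and kernels computed by chasing the largest subgroup forward and the trivial subgroup backward, and the identities of Axiom~\ref{ax2} together with Lemma~\ref{RML} to match numerators. Your unwinding of the six homology objects is right, your zigzag for $v$ is the right one, and your diagnosis of the hypotheses is the key point and is accurate: exactness at $A_h$ needs Lemma~\ref{RML} with middle term $\im c$ (normal only by hypothesis, with $\Ker e$ conormal because $A_h$ is defined), whereas at $B_h$ the middle term $\Ker g$ is normal for free, which is exactly why the statement carries the asymmetric hypothesis and is not self-dual. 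Two points to tighten in a full write-up. First, the well-definedness of $v$ on denominators needs $c(\im m\vee\im a)\subseteq\im d$, and this is the one place where commutativity of the squares is indispensable (via $\im(ca)=\im(dj)\subseteq\im d$), not just the vanishing composites; you gesture at this but it deserves to be explicit. Second, the appeal to Lemma~\ref{induced duality} for exactness at ${}^{\square}B$ is legitimate, but it requires noting that the homology objects of the dual complex at the relevant positions are the duals of ${}^{\square}B$, $A_{\square}$, $B_h$, all of which are defined; alternatively, the direct chase at ${}^{\square}B$ is just as short as the one at $A_{\square}$, since both $\im x$ and $\Ker y$ come out with numerator $(\Ker g\wedge\im e)\vee\im r$ and no modular law is needed, so nothing is really saved by the duality detour.
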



The Incomplete Snail Lemma (ISL, for short), first stated in the context of a pointed regular category in \cite{Snail}, is a stepping stone towards the Snail Lemma due to E.~Vitale \cite{Snail 1}. As shown in \cite{Snail}, in a pointed regular category, the (Complete) Snail Lemma holds if and only if ISL holds and every kernel has a cokernel. The main result obtained in \cite{Snail} was that in this context, ISL is equivalent to subtractivity \cite{subtractivity}. Combining this with the results of \cite{pointed subobject functor, Spider}, ISL is further equivalent to the Lower $3\times 3$ (in our context, that would be Part~(ii) of our Lemma~\ref{33lem}), while in the context of a normal category (a regular category where every regular epimorphism is a cokernel), it is equivalent to the Snake Lemma, as well as the Upper $3\times 3$ Lemma (Part~(i) of our Lemma~\ref{33lem}). With a suitable formulation of the Snake Lemma, it is furthermore shown in \cite{Spider} that the Snake Lemma holds in a pointed regular category if and only if the category is normal and subtractive. For comparison: in the same context, the Five Lemma is equivalent to the Middle $3\times 3$ Lemma, and is in turn equivalent to \emph{protomodularity} in the sense of D.~Bourn \cite{Bourn}, which is known to imply both normality and subtractivity. The Snail Lemma was established in \cite{Snail} in the very context of a pointed regular category that is protomodular (such category is also called a \emph{homological category} in \cite{Borceux Bourn}). In \cite{SnailApp}, the Snail Lemma was used to generalize two exact sequences arising from a pointed functor between pointed groupoids and a fibration of pointed groupoids respectively. This was done by replacing groupoids by groupoids internal to a suitable category. This was generalized even further in \cite{SnailApp 1} by considering profunctors between internal groupoids. Note that the exercise on the Generalized Snail Lemma, which yields a six term exact sequence similar to that of the Snake Lemma, becomes equivalent to the Snail Lemma in the presence of finite limits, which we do not require in our context. It is also worth noting that the `form of subobjects' of a homological category is not, in general, noetherian. Indeed, this would force the category to be semi-abelian (see \cite{DNA 0}), while not every homological category is semi-abelian (a well-known example is the category of topological groups --- see \cite{Borceux Bourn}, for instance). Thus, the exercises in this section do not directly recover the Snail Lemma and ISL in the original settings where they have been first established (unlike in the case of the classical diagram lemmas). Similarly, the exercises on the Spider Lemma and the Dragon Lemma  do not recover these results in the context of a subtractive regular category, which is where they were first established.

The Spider and the Dragon Lemmas introduced in \cite{Spider, diagram chasing} are in a sense the shortest and the longest non-trivial diagram lemmas that can be proved in a pointed regular subtractive category \cite{subtractivity, pointed subobject functor}, and hence in any semi-abelian category. In fact, as shown in \cite{Spider}, for a pointed regular category, the Spider Lemma is equivalent to subtractivity. It is likely that a similar fact is true for the Dragon Lemma.


Goursat's Theorem for the context of groups provides a one to one correspondence between subgroups of the product of two groups and isomorphisms between subquotients of the two groups. The homological lemma given in Exercise~\ref{goursatex} was formulated in \cite{Lambek} and was shown to be equivalent to Goursat's Theorem in the context of groups. It was shown in \cite{Lambek, Lam1} that the connecting morphism in the Snake Lemma, the Zassenhaus-Schreier Refinement Theorem and the Jordan-Hölder Theorem, for the context of groups, follow from different formulations of Goursat's Theorem.


The Salamander Lemma, which was originally formulated in \cite{Bergman} in the context of abelian categories, may also be generalized to our context. The Salamander Lemma was formulated and proved in our context in \cite{Salamander}, where the proof was simplified using two results (see Proposition 3.1 and Proposition 3.5 in \cite{Salamander}) to show the existence of the morphisms in the sequence and a simplified condition to show exactness of these morphisms. The Salamander Lemma produces an exact sequence of homology objects from a double complex.  A number of classical diagram lemmas, including the Four Lemma, Five Lemma, $3\times 3$ Lemma and the Snake Lemma, all follow as direct consequences of the Salamander Lemma (see \cite{Bergman}) for the context of abelian categories. However, in our context, the derivation of these classical diagram lemmas from the Salamander Lemma does not work, since it is not guaranteed that the needed homology objects will exist.

\begin{remark} Note that there is an error in the proof of Theorem~4.2 in \cite{Salamander}, claiming that the $3\times 3$ Lemma, without any additional assumptions of existence of homology objects, follows from the Salamander Lemma.
\end{remark}

\begin{exercise}[Short Five Lemma]
    Consider a commutative diagram,
    \begin{center}
        $\xymatrix{ A\ar@{{ >}->}[r]^f \ar[d]^s  &B \ar@{->>}[r]^g \ar[d]^t  &C\ar[d]^u \\
                A' \ar@{{ >}->}[r]^x & B' \ar@{->>}[r]^y &C',}$
    \end{center}
    where the rows are short exact sequences. Prove the following statements:
    \begin{enumerate}
        \item If $s$ and $u$ are injective then so is $t$.
        \item If $s$ and $u$ are surjective, then so is $t$.

        \item If $s$ and $u$ are isomophisms, then so is $t$.
    \end{enumerate}
\end{exercise}

\begin{remarks}
    (1) The special case where $s$ and $u$ are identities expresses that whenever we have two short exact sequences `from $A$ to $C$', either they are connected by a map $t$ as in the diagram --- in which case they are isomorphic --- or no map $t$ making the diagram commute exists. This is a historically important ingredient towards the interpretation of cohomology via isomorphism classes of short exact sequences; see for instance~\cite{Cartan-Eilenberg} where its use is implicit, or~\cite{Homology} which views the lemma as a result on its own.

    (2) Alternatively, specifying to the situation where $u$ is an identity while the surjections $g$ and $y$ are split epimorphisms in the context of a pointed category yields a version of Bourn's protomodularity~\cite{Bourn}, one of the key ingredients in the definition of a semi-abelian category.
\end{remarks}


Note that short exact sequences (as defined in the previous section) are precisely the exact sequences of the form $$\xymatrix{A\ar@{{ >}->}[r]^{f} &B\ar@{->>}[r]^-{g} &C.}$$
However, in all known contexts of interest, a short exact sequence is equivalent to an exact sequence consisting of five groups where the two outer groups have exactly one subgroup (this is taken as the definition of a short exact sequence for modules in \cite{Homology}); in other words, an exact sequence
$$\xymatrix{0\ar[r] & A \ar[r]^{f} &B\ar[r]^-{g} & C\ar[r] & 0',}$$
where $0$, $0'$ are groups each having exactly one subgroup --- we will refer to these as \emph{strongly short exact sequences}. We will call a group \emph{trivial} when it has exactly one subgroup.

\begin{exercise}
    Show that in a strongly short exact sequence, the groups $0$ and $0'$ are necessarily isomorphic (more generally, show that any two trivial groups connected by a zigzag of morphisms are isomorphic). Moreover, show that in every strongly short exact sequence, the pair $f,g$ is a short exact sequence.
\end{exercise}

The equivalence of the two approaches of defining short exact sequences follows from the following additional axiom.

\begin{axiom}\label{ax6} For every group, its smallest subgroup is conormal while its largest subgroup is normal.
\end{axiom}

\begin{exercise}
    Show that Axiom~\ref{ax6} is equivalent to requiring that a morphism $f\colon A\rightarrow B$ is injective if and only if it forms an exact sequence $$\xymatrix{0\ar[r] &A\ar[r]^{f}& B}$$
    for some trivial $0$, and, along with this, requiring that dually, $f$ is surjective if and only if it forms an exact sequence $$\xymatrix{A\ar[r]^{f}&B \ar[r] &0}$$
    for some trivial $0$.
    Deduce from this that under Axiom~\ref{ax6}, every short exact sequence completes to a strongly short exact sequence.
\end{exercise}

\begin{exercise}
    Formulate and prove the diagram lemmas by replacing short exact sequences with strongly short exact sequences, and by replacing injective/surjective morphisms with those that complete to sequences as shown in the previous exercise (but do this, without assuming Axiom~\ref{ax6}). Show that the version of the Short Five Lemma where the horizontal sequences have been replaced with strongly short exact sequences is a consequence of the Five Lemma. Establish a similar link between the Baby Dragon Lemma and the Dragon Lemma.
\end{exercise}

\begin{remark}
    A context where all axioms hold except Axiom~\ref{ax6} is that of rings with identity and identity-preserving ring homomorphisms, where subgroups are additive subgroups of rings (see \cite{DNA IV}). Conormal subgroups there are subrings containing the identity element, while normal subgroups are ideals. The largest additive subgroup of a ring is, of course, an ideal, but the dual property fails: the smallest additive subgroup does not contain the identity element. While this example shows that Axiom~\ref{ax6} is independent of the rest of the axioms, it is not a particularly interesting example from the point of view of homological diagram lemmas, since only the largest subgroups are both normal and conormal (an ideal contains the identity element only when it is the entire ring), and hence there is a shortage of exact sequences. We do not know of a naturally arising example where Axiom~\ref{ax6} fails, but at the same time, where there isn't a shortage of short exact sequences.
\end{remark}





\section*{Acknowledgements} The first author wishes to thank Stellenbosch University for its kind hospitality during his visits in October and November 2022. The fourth author wishes to thank the African Institute for Mathematical Sciences (AIMS) and Stellenbosch University for their kind hospitality and the opportunity to present an invited lecture series in the \emph{1st Workshop on Mathematical Structures} for which the notes \cite{workshop notes} were prepared.
\\\\
\textbf{Funding Declaration}
The authors have not received any funding for this project.


\begin{thebibliography}{99}
    \bibitem{Bergman}
    G. M. Bergman, \emph{On diagram-chasing in double complexes}, Theory and Applications of Categories 26(3), 2012, 60--96.
    \bibitem{Borceux Bourn}
    F. Borceux and D. Bourn, \emph{Mal'cev, protomodular, homological and semi-abelian categories}, Mathematics and its Applications 566, Kluwer, 2004.
    \bibitem{Bourn}
    D. Bourn, \emph{Normalization Equivalence, Kernel Equivalence and Affine Categories}, Lecture Notes in Mathematics 1488, Springer-Verlag, 1991, 43--62.
    \bibitem{Bourn2001}
    D.~Bourn, \emph{{$3\times 3$} {L}emma and protomodularity}, Journal of Algebra
    236, 2001, 778--795.
    \bibitem{BouJan03} D.~Bourn and G.~Janelidze, \emph{Characterization of protomodular varieties of universal algebras},
    Theory and Applications of Categories 11, 2003, 143--147.
    \bibitem{Buc}
    D. A. Buchsbaum, \emph{Exact categories and duality}, Transactions of the American Mathematical Society 80(1), 1955, 1--34.
    \bibitem{univalg}
    S. Burris and H. P. Sankappanavar, \emph{A course in universal algebra}, Springer-Verlag, 1981.
    \bibitem{Cartan-Eilenberg}
    H.~Cartan and S.~Eilenberg, \emph{Homological algebra}, Princeton University
    Press, 1956.
    \bibitem{chimpinde} T.~C.~Chimpinde, \emph{A categorical approach to Jordan-H\"older theorem}, Structured Masters Research Project, African Institute of Mathematical Sciences, 2014.
    \bibitem{Day}
    K. Dayaram, \emph{Diagram lemmas of homological algebra for
        non-abelian group-like structures}, MSc Thesis, University of Johannesburg, 2021.
    \bibitem{Salamander}
    A. Goswami, \emph{Salamander lemma for non-abelian group-like structures}, Journal of Algebra and its Applications 19(2), 2020, 2050022 (12 pages).
    \bibitem{DNA IV}
    A. Goswami and Z. Janelidze,  \emph{Duality in non-abelian algebra IV. Duality for groups and a universal isomorphism theorem}, Advances in Mathematics 349, 2019, 781--812.
    \bibitem{GKV16} M.~Gran, G.~Kadjo, and J.~Vercruysse, \emph{A torsion theory in the category of cocommutative Hopf algebras}, Applied Categorical Structures 24, 2016, 269--282.
    \bibitem{GSV19} M.~Gran, F.~Sterck, and J.~Vercruysse, A semi-abelian extension of a theorem by Takeuchi, Journal of Pure and Applied Algebra 223, 2019, 4171-4190.
    \bibitem{Grandis}
    M. Grandis, \emph{Homological algebra in strongly non-abelian settings}, World Scientific, 2013.
    \bibitem{SnailApp 1}
    P. Jacqmin, S. Mantovani, G. Metere, and E. M. Vitale,
    \emph{Bipullbacks of fractions and the snail lemma},
    Journal of Pure and Applied Algebra 223(12),
    2019, 5147--5162.
    \bibitem{semi abelian}
    G. Janelidze, L. Márki and W. Tholen, \emph{Semi-abelian categories}, Journal of Pure and Applied Algebra 168, 2002, 367--386.
    \bibitem{subtractivity}
    Z. Janelidze, \emph{Subtractive categories}, Applied Categorical Structures 13, 2005, 343--350.
    \bibitem{pointed subobject functor} Z. Janelidze, \emph{The pointed subobject functor, 3 × 3 lemmas, and subtractivity of spans}, Theory and Applications of Categories 23, 2010, 221–-242.
    \bibitem{Spider}
    Z. Janelidze, \emph{An axiomatic survey of diagram lemmas for non-abelian group-like structures}, Journal of Algebra 370, 2012, 387--401.
    \bibitem{ZJCambr}
    Z. Janelidze, \emph{Subobject Chasing Revisited}, talk given at Category Theory 2014, Cambridge.
    \bibitem{DNA 0}
    Z. Janelidze, \emph{On the form of subgroups in semi-abelian and regular protomodular categories}, Applied Categorical Structures 22, 2014, 755--766.
    \bibitem{diagram chasing}
    Z. Janelidze, \emph{Duality for diagram chasing a la Mac Lane in non-abelian categories}, Homology, Homotopy and Applications 18, 2016, 85--106.
    \bibitem{Snail}
    Z. Janelidze and E. Vitale, \emph{Snail lemma in a pointed regular category}, Journal of Pure and Applied Algebra 221, 2017, 135--143.
    \bibitem{DNA I}
    Z. Janelidze and T. Weighill, \emph{Duality in non-abelian algebra I. From cover relations to Grandis ex-2 categories}, Theory and Applications of Categories 29(11), 2014, 315--331.
    \bibitem{DNA II}
    Z. Janelidze and T. Weighill, \emph{Duality in non-abelian algebra II. From Isbell bicategories to Grandis exact categories}, Journal of Homotopy and Related Structures 11, 2016, 553--570.
    \bibitem{DNA III}
    Z. Janelidze and T. Weighill, \emph{Duality in non-abelian algebra III. Normal categories and 0-regular varieties}, Algebra Universalis 77, 2017, 1--28.
    \bibitem{Lambek}
    J. Lambek, \emph{Goursat's theorem and homological algebra}, Canadian Mathematical Bulletin 7, 1964, 597--608.
    \bibitem{Lam1}
    J. Lambek, \emph{Goursat's theorem and the Zassenhaus lemma}, Canadian Journal of Mathematics 10, 1958, 45--56.
    \bibitem{duality}
    S. Mac Lane, \emph{Duality for groups}, Bulletin of the American Mathematical Society 56, 1950, 485--516.
    \bibitem{Homology}
    S. Mac Lane, \emph{Homology}, Grundlehren der Mathematischen Wissenschaften, vol. 114, Springer, 1963.
    \bibitem{Cftwm}
    S. Mac Lane, \emph{Categories for the working mathematician}, second ed., Graduate Texts in Mathematics, vol.~5, Springer, 1998.
    \bibitem{SnailApp}
    S. Mantovani, G. Metere, and E. M. Vitale,
    \emph{The snail lemma for internal groupoids},
    Journal of Algebra 535,
    2019,
    1--34.
    \bibitem{chasing subgroups}
    N. Monde, \emph{Chasing subgroups}, Structured Masters Research Project, African Institute of Mathematical Sciences, 2014.
    \bibitem{Math 340}
    B. G. Rodrigues, \emph{Algebraic Structures Lecture Notes}, University of KwaZulu-Natal, 2018.
    \bibitem{slominski}
    J. Słomiński, \emph{On the determining form of congruences in abstract algebras with equationally definable constant elements}, Fundamenta Mathematicae 48, 1960, 325--341
    \bibitem{Urs83}
    A. Ursini, \emph{Osservazioni sulle variet\'a BIT}, Bolletino della Unione Matematica Italiana
    7, 1983, 205–-211.
    \bibitem{Urs94} A. Ursini, \emph{On subtractive varieties I}, Algebra Universalis 31, 1994, 204–-222.
    \bibitem{workshop notes}
    T. Van der Linden, \emph{Ordered sets in homological algebra}, 1st Workshop on Mathematical Structures, African Institute of Mathematical Sciences (AIMS), 2013.
    \bibitem{Snail 1}
    E. M. Vitale, \emph{The snail lemma}, Theory and Applications of Categories 31(19), 2016, 484--501.
    \bibitem{van Niekerk MSc}
    F. K. van Niekerk, \emph{Contributions to projective group theory}, M.Sc Thesis, Stellenbosch University, 2017.
    \bibitem{FKvN Phd}
    F. K. van Niekerk, \emph{Concrete foundations of the theory of Noetherian forms}, PhD thesis, Stellenbosch University, 2019.
    \bibitem{FKvN Paper}
    F. K. van Niekerk, \emph{Biproducts and commutators for noetherian forms}, Theory and Applications of Categories 34(30), 2019, 961--992.
    \bibitem{Weighill} T.~Weighill, \emph{Bifibrational duality in non-abelian algebra and the theory of databases}, MSc Thesis,
    Stellenbosch University, 2014.
    \bibitem{Wyler}
    O. Wyler, \emph{Weakly exact categories}, Archiv der Mathematik XVII, 1966, 9--19.
    \bibitem{Wyler 1}
    O. Wyler, \emph{The Zassenhaus lemma for categories}, Archiv der Mathematik XVII, 1971, 561--569.
\end{thebibliography}
\end{document}